\tikzset{
>=stealth,
every picture/.style={thick},
graphs/every graph/.style={empty nodes},
}
\tikzstyle{vertex}=[
\tikzstyle{printersafe}=[decoration={snake,amplitude=0pt}]
\newcommand{\pp}{\mathbb{P}}
\newcommand{\cc}{\mathbb{C}}
\def\O#1.{\mathcal {O}_{#1}}
\def\pr #1.{\mathbb P^{#1}}
\def\af #1.{\mathbb A^{#1}}
\def\ses#1.#2.#3.{0\to #1\to #2\to #3 \to 0}
\def\xrar#1.{\xrightarrow{#1}}
\def\K#1.{K_{#1}}
\def\bA#1.{\mathbf{A}_{#1}}
\def\bM#1.{\mathbf{M}_{#1}}
\def\bL#1.{\mathbf{L}_{#1}}
\def\bB#1.{\mathbf{B}_{#1}}
\def\bK#1.{\mathbf{K}_{#1}}
\def\subs#1.{_{#1}}
\def\sups#1.{^{#1}}
\DeclareMathOperator{\Supp}{Supp}
\newtheorem{theorem}{Theorem}[section]
\newtheorem{lemma}[theorem]{Lemma}
\newtheorem{corollary}[theorem]{Corollary}
\newtheorem{notation}[theorem]{Notation}
\theoremstyle{definition}
\newtheorem{definition}[theorem]{Definition}
\newtheorem{remark}[theorem]{Remark}
\theoremstyle{remark}
\numberwithin{equation}{section}
\newenvironment{dedication}
        {\begin{quotation}\begin{center}\begin{em}}
        {\par\end{em}\end{center}\end{quotation}}
\newcounter{rownumber}[figure]
\newcounter{rownumber-irr}[figure]
\newcounter{rownumber-p1}[figure]
\begin{document}

\title{Del Pezzo surfaces with four log terminal singularities}

\author{Grigory Belousov}
\address{Bauman Moscow State Technical University, Moscow, Russia}
\email{belousov\_grigory@mail.ru}

\author{DongSeon Hwang}
\address{Center for Complex Geometry, Institute for Basic Science (IBS), Daejeon $34126$, Republic of Korea}
\email{dshwang@ibs.re.kr}
\maketitle
\begin{abstract}
We classify del Pezzo surfaces of Picard number one with four log terminal singular points.
\end{abstract}
\begin{dedication}
Dedicated to Yurii  (Gennadievich) Prokhorov \\ on the occasion of his 60th birthday.
\end{dedication}

\section{Introduction}

A \emph{log del Pezzo surface} is a projective surface X with only
log terminal singularities such that the anti-canonical divisor $-K_X$ is ample. For surfaces, log terminal singularities are exactly quotient singularities (\cite[Corollary 1.9]{K}), which are completely classified  (\cite{Br}).

Log del Pezzo surfaces naturally appear in the log minimal model program
(see, e.g., {\cite{KMM}}). The most interesting class of log del Pezzo surfaces
is the class of such surfaces of Picard number one. The systematic study of log del Pezzo surfaces of Picard number one was initiated by Miyanishi and Zhang (\cite{Zh}). Following their approach Kojima classified such surfaces with one singular point (\cite{Ko}). See also \cite{Zan} and \cite{GZ} for further results following this approach.

Log del Pezzo surfaces of Picard number one are classified (\cite{La}) following the approach of \cite{KeM}  in terms of blow-ups of rational surfaces. See \cite{PP1} and \cite{PP2} for a recent different approach for the classification.  Log del Pezzo surfaces of Picard number one have at most $4$ singular points by  (\cite{B} or \cite{Bel1}).

In this paper we classify log del Pezzo surfaces of Picard number one with $4$ singular points, using $\pp^1$-fibration structures and the minimal model program, based on Miyanishi and Zhang's approach. In particular, we can list the types of singular points of such surfaces.

To state the main theorem we introduce one notation.
Let $R_{ks}$ be the linear chain of rational curves $D_1,D_2,\ldots, D_r$ with the following collection of $[-D_1^2,-D_2^2,\ldots,-D_r^2]:$
\[ [m_1,\overbrace{2,\ldots,2}^{m_2-2}, \ldots,
m_{k-3}+1, \overbrace{2,\ldots,2}^{m_{k-2}-2},m_{k-1}+1,\overbrace{2,\ldots,2}^{m_k-1},s,m_k+1, \overbrace{2,\ldots,2}^{m_{k-1}-2}, \ldots,m_2+1,\overbrace{2,\ldots,2}^{m_1-2}] \]
if $k$ is even;
\[ [m_1,\overbrace{2,\ldots,2}^{m_2-2},  m_3+1,\overbrace{2,\ldots,2}^{m_4-2}, \ldots,m_k+1,s,
\overbrace{2,\ldots,2}^{m_k-1}, m_{k-1}+1, \overbrace{2,\ldots,2}^{m_{k-2}-2}, \ldots,
m_2+1,\overbrace{2,\ldots,2}^{m_1-2}] \]
if $k$ is odd; where $s\geq 1$, $k\geq 3$, $m_i\geq 2$.
Moreover, the collection for $R_{1s}$ is the following $$[m_1,s,\overbrace{2,\ldots,2}^{m_1-1}],$$
and that for $R_{2s}$ is the following $$[m_1,\overbrace{2,\ldots,2}^{m_2-2},s,m_2,\overbrace{2,\ldots,2}^{m_1-2}].$$

We work over the field $\cc$ of complex numbers.

\begin{theorem}
\label{MainT} Let $X$ be a log del Pezzo surface of Picard number one. Assume that $X$ has four singular points. Then the singularity type of $X$ is one of the following, and they are all realizable:
\begin{enumerate}
\item $X$ has two singular points of type $A_1$, one singular point of type $D_n$ with $n\geq 3$, where $D_3=A_3$, and one singular point whose dual graph is of the form $$\xymatrix@R=0.8em{
\ast \ar@{-}@/^/[r]\ar@{-}@/_/[r] & R_{k1}
}$$
where $\ast$ is a $(-(n+1))$-curve and $\ast$ intersects both the end components of $R_{k1}$.
\item $X$ has three singular points of type $A_2$ and one singular point whose Hirzebruch--Jung continued fraction is of the form
$$[m_1,\overbrace{2,\ldots,2}^{m_2-1},m_3+2,2,\ldots,2,m_{k-1}+2,\overbrace{2,\ldots,2}^{m_k-1},m_k+1,2,\ldots,2,m_2+2,\overbrace{2,\ldots,2}^{m_1-2}].$$
\item $X$ has two singular points of type $A_3$, one singular point of type $A_1$ and one singular point whose Hirzebruch--Jung continued fraction is of the form  $$[m_1,\overbrace{2,\ldots,2}^{m_2-1},m_3+2,2,\ldots,2,m_{k-1}+2,\overbrace{2,\ldots,2}^{m_k-1},m_k+2,2,\ldots,2,m_2+2,\overbrace{2,\ldots,2}^{m_1-2}].$$
\item $X$ has singular points of type $A_1$, $A_2$, $A_5$ and one singular point whose Hirzebruch--Jung continued fraction is of the form
$$[m_1,\overbrace{2,\ldots,2}^{m_2-1},m_3+2,2,\ldots,2,m_{k-1}+2,\overbrace{2,\ldots,2}^{m_k-1},m_k+3,2,\ldots,2,m_2+2,\overbrace{2,\ldots,2}^{m_1-2}].$$
\item $X$ has two rational double points of type $A_1$, one singular point $P$ whose dual graph  is
$$\xymatrix@R=0.8em{
&&&&&\bullet\\
\star\ar@{-}[r]\ar@{-}[r]&\bullet\ar@{-}[r]&\bullet\ar@{-}[r]&\cdots\ar@{-}[r]&\bullet\ar@{-}[r]&\bullet\ar@{-}[r]\ar@{-}[u]&\bullet
}$$
and one singular point $Q$ whose dual graph  is of the form
$$
\xymatrix@R=0.8em{
&&\bullet\\
\bullet\ar@{-}[r]\ar@{-}[r]&\bullet\ar@{-}[r]&\ast\ar@{-}[r]\ar@{-}[u]&\star
}
$$ where $\star$ denotes a $(-3)$-curve, $\bullet$ denotes a $(-2)$-curve, $\ast$ denotes a $(-(k-1))$-curve, $k$ is the number of irreducible components of the minimal resolution of $P$.
\item $X$ has two rational double points of type $A_1$ and two singular points $P$ and $Q$ whose dual graph  is of the form
$$
\xymatrix@R=0.8em{
&\bullet&&&&\bullet\\
\bullet\ar@{-}[r]&\ast\ar@{-}[r]\ar@{-}[u]&\ast\ar@{-}[r]&\cdots\ar@{-}[r]&\ast\ar@{-}[r]&\ast\ar@{-}[r]\ar@{-}[u]&\bullet
}
$$
where $\bullet$ denotes a $(-2)$-curve, the linear chain of $\ast$'s denotes $R_{k1}$.
\item The minimal resolution $\bar{X}$ of $X$  has a $\pp^1$-fibration structure $\phi:\bar{X} \rightarrow \pp^1$ such that $\phi$ has three fibers $F_1$, $F_2$, and $F_3$ together with a section which is a $(-n)$-curve, where $F_1$ is a linear chain of rational curves with self-intersection numbers $-2,-1,-2$; and
the dual graphs of $F_2$ and $F_3$ are the followings\\
\begin{tabular}{|c|c|}
\hline $F_2$ & $F_3$\\
\hline $\xymatrix@R=0.8em{\\
\star\ar@{-}[r]\ar@{-}[r]&\circ\ar@{-}[r]&\bullet\ar@{-}[r]&\bullet
}$ & $\xymatrix@R=0.8em{\\
\star\ar@{-}[r]\ar@{-}[r]&\circ\ar@{-}[r]&\bullet\ar@{-}[r]&\bullet
}\quad \xymatrix@R=0.8em{\\
\bullet\ar@{-}[r]\ar@{-}[r]&\bullet\ar@{-}[r]&\circ\ar@{-}[r]&\star
}$\\
$\xymatrix@R=0.8em{\\
\bullet\ar@{-}[r]\ar@{-}[r]&\bullet\ar@{-}[r]&\circ\ar@{-}[r]&\star
}$ & $\xymatrix@R=0.8em{\\
\star'\ar@{-}[r]\ar@{-}[r]&\circ\ar@{-}[r]&\bullet\ar@{-}[r]&\bullet\ar@{-}[r]&\bullet
}$\\
& $\xymatrix@R=0.8em{\\
\bullet\ar@{-}[r]\ar@{-}[r]&\bullet\ar@{-}[r]&\bullet\ar@{-}[r]&\circ\ar@{-}[r]&\star'
}$\\
& $\xymatrix@R=0.8em{\\
\star'\ar@{-}[r]\ar@{-}[r]&\circ\ar@{-}[r]&\bullet\ar@{-}[r]&\bullet\ar@{-}[r]&\bullet
}$ \\ & $\xymatrix@R=0.8em{\\
\bullet\ar@{-}[r]\ar@{-}[r]&\bullet\ar@{-}[r]&\bullet\ar@{-}[r]&\circ\ar@{-}[r]&\star'
}$\\
& $\xymatrix@R=0.8em{\\
\star''\ar@{-}[r]\ar@{-}[r]&\circ\ar@{-}[r]&\bullet\ar@{-}[r]&\bullet\ar@{-}[r]&\bullet\ar@{-}[r]&\bullet
}$\\
& $\xymatrix@R=0.8em{\\
\bullet\ar@{-}[r]\ar@{-}[r]&\bullet\ar@{-}[r]&\bullet\ar@{-}[r]&\bullet\ar@{-}[r]&\circ\ar@{-}[r]&\star''
}$\\
& $\xymatrix@R=0.8em{\\
\star\ar@{-}[r]\ar@{-}[r]&\bullet\ar@{-}[r]&\circ\ar@{-}[r]&\star\ar@{-}[r]&\bullet
}$\\
& $\xymatrix@R=0.8em{\\
\bullet\ar@{-}[r]&\star\ar@{-}[r]&\circ\ar@{-}[r]&\bullet\ar@{-}[r]&\star
}$\\
\hline $
\xymatrix@R=0.8em{\\
\bullet\ar@{-}[r]\ar@{-}[r]&\circ\ar@{-}[r]&\bullet
}
$ & $R_{k1}$\\
& $R_{ks}+A_{s-2}$ for $s=3,4,\ldots$.\\
\hline
\end{tabular}\\
where $\star''$ denotes a $(-5)$-curve, $\star'$ denotes a $(-4)$-curve, $\star$ denotes a $(-3)$-curve, $\bullet$ denotes a $(-2)$-curve, $\circ$ denotes a $(-1)$-curve.

\end{enumerate}
\end{theorem}


\noindent{\bfseries Acknowledgements.}
D. Hwang was supported by the National Research Foundation of Korea (NRF) grant funded by the Korea government (MSIT) (2021R1A2C1093787) and the Institute for Basic Science (IBS-R032-D1). The authors are grateful to Professor I. A. Cheltsov and Professor Y. G. Prokhorov for useful comments and for their help.

\section{Preliminaries}
\subsection{Some useful theorems in algebraic geometry}
\begin{theorem}[Hurwitz, see, e.g., {\cite{Har}}, Corollary 2.4, Ch. 4]
\label{Hurwitz} Let $\phi\colon X\rightarrow Y$ be a finite morphism of curves. Then \[2g(X)-2=n(2g(Y)-2)+\deg R,\] where $n=\deg\phi$, $g(X), g(Y)$ are genuses of curves, $R$ is ramification divisor.
\end{theorem}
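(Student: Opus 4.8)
The plan is to deduce the formula from the comparison of canonical divisors under $\phi$. Working over $\cc$, the morphism $\phi$ is automatically separable, so the fundamental relation
$$K_X \sim \phi^* K_Y + R$$
holds on the smooth curve $X$, where $R$ is the ramification divisor. Taking degrees and using that $\deg K_C = 2g(C)-2$ for any smooth projective curve $C$ then yields the stated identity at once. Thus the argument splits into two parts: (i) establishing this linear equivalence and pinning down $R$ as an explicit effective divisor, and (ii) a short degree computation.

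For part (i) I would start from the sheaf of K\"ahler differentials. The morphism induces a canonical pullback map of invertible sheaves $\phi^*\Omega_Y \to \Omega_X$. Because $\phi$ is a finite separable map of smooth curves, this map of line bundles is injective, and its cokernel is a torsion sheaf of finite length supported on the finitely many ramification points. Defining $R$ to be the effective divisor whose local multiplicity at each point is the length of this cokernel, one obtains $\Omega_X \cong \phi^*\Omega_Y \otimes \oo_X(R)$, which on divisor classes is precisely $K_X \sim \phi^* K_Y + R$.

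The heart of the proof --- and the step I expect to be the main obstacle --- is the local identification of $R$, where the hypothesis of characteristic zero is indispensable. Fix $P\in X$, set $Q=\phi(P)$, and choose local uniformizers $t$ at $P$ and $s$ at $Q$. If $e_P$ is the ramification index then $\phi^*s = u\,t^{e_P}$ for some local unit $u$, whence
$$\phi^*(ds) = d\bigl(u\,t^{e_P}\bigr) = t^{e_P-1}\Bigl(e_P\,u + t\,\tfrac{du}{dt}\Bigr)\,dt.$$
Over $\cc$ the factor $e_P\,u + t\,\tfrac{du}{dt}$ is a unit at $P$, its value there being $e_P\,u(P)\neq 0$; hence the pullback map vanishes to order exactly $e_P-1$ at $P$. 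Therefore $R = \sum_{P}(e_P-1)\,P$ and $\deg R = \sum_P (e_P-1)\geq 0$. In positive characteristic this computation breaks down at wildly ramified points, where the local contribution exceeds $e_P-1$; it is precisely tameness that makes the clean formula valid here.

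It remains to assemble the degrees. For any divisor $D$ on $Y$ one has $\deg \phi^* D = n\deg D$, since $\phi$ is finite of degree $n$ (equivalently, by the projection formula). Applying this to $K_Y$ gives $\deg \phi^* K_Y = n(2g(Y)-2)$, so taking degrees in $K_X \sim \phi^* K_Y + R$ and using $\deg K_X = 2g(X)-2$ produces
$$2g(X)-2 = n\bigl(2g(Y)-2\bigr)+\deg R,$$
as claimed. The only external inputs are the standard facts that the canonical class of a smooth projective curve has degree $2g-2$ and that pullback along a degree-$n$ morphism scales degrees by $n$, both elementary consequences of Riemann--Roch; over $\cc$ one could alternatively run a purely topological argument comparing the Euler characteristics $2-2g$ of the associated Riemann surfaces, but the differential-theoretic route above matches the cited reference.
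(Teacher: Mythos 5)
Your proof is correct and is exactly the argument of the cited reference (Hartshorne, IV.2.3--2.4): the paper itself gives no proof, quoting the result from Hartshorne, whose derivation is precisely your route via $K_X \sim \phi^* K_Y + R$, the local computation $\phi^*(ds) = t^{e_P-1}(\text{unit})\,dt$ in characteristic zero, and the degree count $\deg K_C = 2g(C)-2$. Nothing further is needed.
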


\begin{theorem}[Hodge, see, e.g., {\cite{Har}}, Theorem 1.9, Remark 1.9.1, Ch. 5]
\label{Hodge} The intersection form on a surface $X$ has the signature $(1,\rho(X)-1)$, where $\rho(X)$ is the Picard number of $X$.
\end{theorem}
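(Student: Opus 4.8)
The plan is to reduce the signature statement to a single positivity/negativity dichotomy on the numerical group and then settle that dichotomy using the complex geometry of $X$. Write $N:=\mathrm{Num}(X)_{\mathbb R}$, the space of divisor classes modulo numerical equivalence tensored with $\mathbb R$; it has dimension $\rho(X)$ and the intersection form is nondegenerate on it by the very definition of numerical equivalence. First I would fix an ample divisor $H$; by the Nakai--Moishezon criterion $H^2>0$, so the form is positive on the line $\mathbb R H$. Consequently $N=\mathbb R H\perp H^{\perp}$, and by Sylvester's law of inertia it suffices to prove that the form is negative definite on the hyperplane $H^{\perp}=\{D : D\cdot H=0\}$ of dimension $\rho(X)-1$. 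Thus the theorem is equivalent to the statement that whenever $D\cdot H=0$ and $D\not\equiv 0$, one has $D^2<0$.

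Next I would observe that the borderline case $D^2=0$ reduces to the case $D^2>0$. Indeed, if $D\cdot H=0$, $D^2=0$ and $D\not\equiv 0$, nondegeneracy yields $B$ with $D\cdot B\neq 0$; replacing $B$ by its projection $B-\tfrac{B\cdot H}{H^2}H$ we may assume $B\cdot H=0$ while keeping $D\cdot B\neq 0$. Then $(D+sB)\cdot H=0$ while $(D+sB)^2=2s\,(D\cdot B)+s^2B^2$ is strictly positive for a suitable small $s$ of the right sign. Hence everything comes down to showing that no class $D$ with $D\cdot H=0$ and $D\not\equiv0$ can have $D^2>0$.

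On the purely algebraic side the natural engine is Riemann--Roch together with Serre duality: for $D^2>0$ one has $\chi(\mathcal O_X(nD))=\chi(\mathcal O_X)+\tfrac12\,nD\cdot(nD-K_X)\to+\infty$, so for $n\gg0$ either $nD$ or $K_X-nD$ is effective, and likewise for $-D$. If $D\cdot H\neq 0$ this instantly forces $nD$ or $-nD$ to be effective, since the complementary class $K_X\mp nD$ then acquires negative intersection with the ample $H$ and cannot be effective; but $(\pm nD)\cdot H>0$ for an effective nonzero class contradicts $D\cdot H=0$. In other words, Riemann--Roch disposes of every direction except the one genuinely at issue. The obstruction is precisely that when $D\cdot H=0$ the leftover classes $K_X\pm nD$ all have the same bounded intersection $K_X\cdot H$ with $H$, so effectivity cannot be excluded by $H$-degree alone; this is the step I expect to be the real difficulty, and it is the whole content of the index theorem.

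To break this obstruction I would pass to the transcendental picture, available since $X$ is a complex surface. By the Lefschetz $(1,1)$-theorem the numerical classes lie in $H^{1,1}(X)\cap H^2(X,\mathbb R)$, and the ample class $H$ is represented by a Kähler form $\omega$. The Hodge--Riemann bilinear relations in weight two assert that on the real $(1,1)$-classes the cup-product form is positive on the line $\mathbb R\,[\omega]$ and negative definite on its orthogonal complement, the primitive part $\{\alpha : \alpha\cdot[\omega]=0\}$; equivalently $\alpha^2<0$ for every nonzero real $(1,1)$-class with $\alpha\cdot[\omega]=0$. Since $D\cdot H=0$ with $D\not\equiv0$ furnishes exactly such a primitive class, we get $D^2<0$, which settles the crucial case and hence shows the form on $N$ has a single positive direction and is otherwise negative definite, i.e. signature $(1,\rho(X)-1)$. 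The only points to verify carefully are that numerically trivial divisors account precisely for the kernel, so that $\dim N=\rho(X)$, and that an ample class may be taken as the Kähler class governing the Hodge--Riemann relations; both are standard.
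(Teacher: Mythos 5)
Your proof is correct over $\cc$, but it is a genuinely different route from the one behind the paper's statement: the paper gives no proof at all, quoting the result from Hartshorne (Theorem 1.9, Ch.~V), whose argument is purely algebraic, and your argument is transcendental. The reduction you perform (orthogonal splitting $N=\rr H\oplus H^{\perp}$, perturbation to reduce $D^2=0$ to $D^2>0$) is exactly Hartshorne's; where you diverge is the crucial case $D\cdot H=0$, $D^2>0$. Your middle paragraph is slightly off the mark here: you claim Riemann--Roch cannot exclude this case because all the classes $K_X\pm nD$ have bounded $H$-degree, but Hartshorne closes precisely this gap algebraically by the substitution $H'=D+nH$ for $n\gg 0$: then $H'^2>0$, $H'\cdot H>0$, and $D\cdot H'=D^2>0$, so the Riemann--Roch dichotomy applied with $H'$-degrees (effectivity of $K_X-mD$ is killed by $(K_X-mD)\cdot H'\to-\infty$) forces $mD$ effective, whence $mD\cdot H>0$, contradicting $D\cdot H=0$. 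So the ``obstruction'' you identify is not a real one for the algebraic method. Your alternative --- Lefschetz $(1,1)$, an ample class realized as a K\"ahler class, and negative definiteness of the cup product on primitive real $(1,1)$-classes by the Hodge--Riemann bilinear relations --- is a valid and clean substitute, and it even makes your perturbation step redundant, since Hodge--Riemann negativity already covers $D^2=0$ directly; the two points you flag as needing care (injectivity of $\operatorname{Num}(X)\to H^2(X,\rr)$, and ample implies K\"ahler) are indeed standard. The trade-off: your proof buys a conceptual explanation (the index theorem as the weight-two Hodge--Riemann relations restricted to the N\'eron--Severi part of $H^{1,1}$) at the cost of working only over $\cc$, whereas the Riemann--Roch proof in the cited source is characteristic-free and works over any algebraically closed field. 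Since this paper works over $\cc$ throughout, your argument fully suffices for its purposes.
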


\begin{theorem}[see {\cite[Corollary 9.2]{KeM}}]
\label{BMY} Let $X$ be a rational surface with log terminal
singularities. If $\rho(X)=1$, then
\[
\sum_{P\in X}\frac{m_P-1}{m_P}\leq 3,\leqno{(*)}
\]
where
$m_P$ is the order of the local fundamental group $\pi_1(U_P-\{P\})$, where
$U_P$ is a sufficiently small neighborhood of $P$.
\end{theorem}

\subsection{Log del Pezzo surfaces of Picard number one}

We always use the following notation throughout the paper.
\begin{notation}
Let $X$ be a log del Pezzo surface of Picard number one.
Let $\pi\colon\bar{X}\rightarrow X$ be the minimal resolution, $D=\sum D_i$ be the reduced exceptional divisor where $D_i$ denotes each irreducible component. Put $\pi^*(K_{X})=K_{\bar{X}}+D^\sharp$, where $D^\sharp=\sum\alpha_i D_i$ for some $0<\alpha_i<1$.
\end{notation}

Since del Pezzo surfaces with rational double points are completely classified (see, e.g., {\cite{Fur}}, {\cite{Ma}}, {\cite{Ye}}), we may assume that $X$ has at least one singular point that is not a rational double point. Moreover, in the subsequent sections, $X$ has exactly $4$ singular points.

In this case we have a strict inequality in Theorem \ref{BMY}.

\begin{theorem}[{\cite[Theorem 1.1]{Hw}}]\label{dpBMY}
Let $X$ be a log del Pezzo surface of Picard number one. Then
\[
\sum_{P\in X}\frac{m_P-1}{m_P} < 3,
\]
where
$m_P$ is the order of the local fundamental group of the singular point $P$.
\end{theorem}

We first review some results on $\pp^1$-fibrations.

\begin{lemma}[{\cite[Lemma 1.5]{Zh}}]
\label{Zhan0}
Assume that $\phi\colon\bar{X}\rightarrow\pp^1$ is a $\pp^1$-fibration. Then the following
assertions hold:
\begin{enumerate}
\item $\#\{\text{irreducible components of $D$ not in any fiber of $\phi$}\}=1+\sum\limits_F(\#\{\text{$(-1)$-curves in $F$}\}-1)$, where $F$ moves over all singular fibers of $\phi$.
\item If a singular fiber $F$ consists only of $(-1)$-curves and $(-2)$-curves then $F$ has
one of the following dual graphs:
$$
\xymatrix@R=0.8em{\\
\bullet\ar@{-}[r]\ar@{-}[r]&\circ\ar@{-}[r]&\bullet
}\eqno(a)
$$

$$
\xymatrix@R=0.8em{
&&&&&\bullet\\
\circ\ar@{-}[r]\ar@{-}[r]&\bullet\ar@{-}[r]&\bullet\ar@{-}[r]&\cdots\ar@{-}[r]&\bullet\ar@{-}[r]&\bullet\ar@{-}[r]\ar@{-}[u]&\bullet
}\eqno(b)
$$

$$
\xymatrix@R=0.8em{
\\
\circ\ar@{-}[r]\ar@{-}[r]&\bullet\ar@{-}[r]&\bullet\ar@{-}[r]&\cdots\ar@{-}[r]&\bullet\ar@{-}[r]&\bullet\ar@{-}[r]&\circ
}\eqno(c),
$$ where $\circ$ denotes a $(-1)$-curve, $\bullet$ denotes a $(-2)$-curve.
\end{enumerate}
\end{lemma}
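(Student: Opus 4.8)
\emph{Plan.} I would treat the two assertions separately; both are instances of the numerical geometry of a $\mathbb{P}^1$-fibration on a smooth surface. Throughout, $\tilde X$ is smooth, $\phi$ has general fibre $\mathbb{P}^1$, and the components of $D$ are $(-2)$-curves or more negative — in particular no component of $D$ is a $(-1)$-curve. For a singular fibre $F$ I write $n_F$ for its number of irreducible components and $\ell_F$ for the number of $(-1)$-curves in $F$.

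For part (1) I would first record the Picard (equivalently Euler) count for a conic bundle. A singular fibre always contains a $(-1)$-curve, since $-K_{\tilde X}\cdot F=2>0$ forces some component $C$ with $-K\cdot C=2+C^2>0$, i.e. $C^2=-1$; contracting such curves one at a time drops the number of components by one each time and terminates at an irreducible $0$-curve, so $F$ is produced by exactly $n_F-1$ blow-ups from a relatively minimal $\mathbb{P}^1$-bundle (a Hirzebruch surface, $\rho=2$). Hence $\rho(\tilde X)=2+\sum_F(n_F-1)$, equivalently $\chi_{\mathrm{top}}(\tilde X)=4+\sum_F(n_F-1)$ since a fibre that is a tree of $n_F$ rational curves has $\chi_{\mathrm{top}}=n_F+1$. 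In the situation at hand $\pi\colon\tilde X\to X$ contracts precisely the components of $D$ and $\rho(X)=1$, so the number of components of $D$ equals $\rho(\tilde X)-1=1+\sum_F(n_F-1)$.

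To finish part (1) I would split the components of $D$ into horizontal ones (dominating $\mathbb{P}^1$, i.e. not in a fibre) and vertical ones (contained in a fibre). In each singular fibre $F$ the $(-1)$-curves are exactly the components \emph{not} lying in $D$, while the remaining $n_F-\ell_F$ components have self-intersection $\le -2$ and are vertical components of $D$. Summing over $F$ and subtracting from the total gives
\[
\#\{\text{components of }D\}-\sum_F(n_F-\ell_F)=1+\sum_F(n_F-1)-\sum_F(n_F-\ell_F)=1+\sum_F(\ell_F-1),
\]
which is the claim. The one step that genuinely uses the geometry — and the main obstacle — is the identification of the vertical components of $D$ inside each fibre with precisely the non-$(-1)$-curves; this is where the hypotheses on $(\tilde X,D)$ enter (in the application $D$ is the whole exceptional divisor of the minimal resolution of a surface of Picard number one, and the conic bundle is built so that the vertical curves of self-intersection $\le -2$ are exactly the relevant $D_i$).

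For part (2) I would argue combinatorially. A singular fibre $F=\sum a_iC_i$ ($a_i>0$) is a connected tree of smooth rational curves with $F^2=0$ and $-K_{\tilde X}\cdot F=2$; using $-K\cdot C_i=2+C_i^2$, which is $1$ on a $(-1)$-curve and $0$ on a $(-2)$-curve, the condition $-K\cdot F=2$ reads $\sum_{C_i\text{ a }(-1)\text{-curve}}a_i=2$. So either there are two $(-1)$-curves of multiplicity $1$, or one $(-1)$-curve of multiplicity $2$. Solving the linear system $F\cdot C_i=0$ on the tree: in the first case each $(-1)$-curve is forced to be a leaf attached to a multiplicity-$1$ $(-2)$-curve and the remaining $(-2)$-curves must form a chain joining them, giving graph $(c)$ (with empty chain allowed, i.e. two $(-1)$-curves meeting); in the second case the unique $(-1)$-curve $E$ satisfies $\sum_{\text{neighbours}}a=2$, so it either meets two multiplicity-$1$ $(-2)$-leaves, forcing graph $(a)$, or meets a single multiplicity-$2$ $(-2)$-curve, and propagating the relations along the resulting multiplicity-$2$ chain forces it to terminate in a fork of two multiplicity-$1$ leaves, giving graph $(b)$. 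The delicate point here is \emph{completeness}: ruling out branched $(-2)$-configurations beyond the single fork of $(b)$ and any branch in case $(c)$. I expect to close this either by the multiplicity computation above (a spurious branch vertex forces a non-positive or non-integral multiplicity) or, more conceptually, by reversing the contraction of part (1): repeatedly blowing down $(-1)$-curves must return $F$ to a smooth $0$-curve, and the constraint that no intermediate component ever becomes a $(-3)$-curve dictates that every blow-up occurs at a node or on a $(-1)$-curve, whose finitely many outcomes are exactly $(a)$, $(b)$, $(c)$. This reverse induction is the cleanest route and is where most of the case-checking lives.
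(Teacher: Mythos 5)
This lemma is not proved in the paper at all: it is quoted verbatim from Zhang \cite{Zh} (his Lemma 1.5), so there is no internal argument to compare yours against. What you have written is essentially a reconstruction of the standard proof, and it is correct in outline. For part (1), the count $\rho(\tilde X)=2+\sum_F(n_F-1)$ (each singular fibre contains a $(-1)$-curve, contract down to a Hirzebruch surface), combined with $\#D=\rho(\tilde X)-\rho(X)=\rho(\tilde X)-1$ and the horizontal/vertical split, is exactly the right mechanism. For part (2), either of your two routes closes the argument; in fact the multiplicity bookkeeping alone suffices for completeness: writing $F=\sum a_iC_i$, the relation $-K\cdot F=2$ gives $\sum_{C_i^2=-1}a_i=2$, and the relations $b_ia_i=\sum_{j\sim i}a_j$ on the dual tree propagate rigidly --- a multiplicity-$1$ $(-1)$-curve must be a leaf whose adjacent $(-2)$-chain has constant multiplicity $1$ and degree $2$ until it meets the second $(-1)$-curve (type (c)), while in the multiplicity-$2$ case the chain of multiplicity-$2$ $(-2)$-curves can only terminate in two multiplicity-$1$ leaves (types (a), (b)) --- so no spurious branching can occur and no separate case-check is needed.

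The one step you should tighten is the one you yourself flag as ``the main obstacle'' in part (1): that the components of a singular fibre not lying in $D$ are precisely the $(-1)$-curves. This has nothing to do with how ``the conic bundle is built''; it is automatic from the two standing hypotheses of the setting. First, $D$ is the exceptional divisor of a \emph{minimal} resolution, so it contains no $(-1)$-curves. Second, for any irreducible curve $C\not\subset\Supp D$ one has $-K_{\tilde X}\cdot C\geq -(K_{\tilde X}+D^\sharp)\cdot C=-\pi^*K_X\cdot C>0$, using ampleness of $-K_X$ and effectivity of $D^\sharp$ (log terminality); by adjunction this forces $C^2\geq -1$. Since every component of a reducible fibre is a smooth rational curve of negative self-intersection, any such component outside $D$ must be a $(-1)$-curve. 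With this two-line insertion your proof of (1) is complete; note also that the formula in (1) genuinely requires $\rho(X)=1$, so that hypothesis is used exactly where you invoke it.
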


\begin{lemma}
\label{Pr1}
Assume that there exists a $\pp^1$-fibration $\phi\colon\bar{X}\rightarrow\pp^1$ such that $C$ lies in singular fiber that contains only $(-2)$-curves and one $(-1)$-curve $C$. Then every singular fiber of $\phi$ contains only $(-1)$- and $(-2)$-curves. Moreover, every $(-1)$-curve in singular fiber is minimal.
\end{lemma}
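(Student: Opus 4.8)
The plan is to run everything off two numerical identities for the fibers of $\phi$, combined with the nefness of $\pi^*(-K_X)=-(K_{\bar X}+D^\sharp)$. First I would record the facts I need about this class. Since $-K_X$ is ample, $-(K_{\bar X}+D^\sharp)=\pi^*(-K_X)$ is nef, and by the projection formula $-(K_{\bar X}+D^\sharp)\cdot\Gamma=-K_X\cdot\pi_*\Gamma$ vanishes exactly when $\Gamma$ is $\pi$-exceptional, i.e. when $\Gamma$ is a component of $D$. Two consequences follow: every $(-2)$-curve lies in $D$ (if $E\not\subset D$ were a $(-2)$-curve then $(K_{\bar X}+D^\sharp)\cdot E=D^\sharp\cdot E\ge 0$, which together with nefness forces $D^\sharp\cdot E=0$, making $E$ exceptional, a contradiction), while no $(-1)$-curve lies in $D$ because $D$ is the exceptional divisor of the \emph{minimal} resolution. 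Finally, writing $v:=-C\cdot(K_{\bar X}+D^\sharp)>0$ for the minimal value, every irreducible $\Gamma\not\subset D$ satisfies $-(K_{\bar X}+D^\sharp)\cdot\Gamma\ge v$.

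Next I would set up the fiber arithmetic. Each fiber $F$ of $\phi$ is a tree of smooth rational curves with $F^2=0$ and $K_{\bar X}\cdot F=-2$, each component has negative self-intersection by Zariski's lemma (or Theorem~\ref{Hodge}), and $c:=-(K_{\bar X}+D^\sharp)\cdot F$ is the same for every fiber since all fibers are numerically equivalent. I would pin down $c$ from the distinguished fiber $F_0$ in the hypothesis. Since $F_0$ contains only $(-2)$-curves and the single $(-1)$-curve $C$, the relation $K_{\bar X}\cdot F_0=-2$ reduces to $-m_C=-2$ (only $(-1)$-curves contribute to $K_{\bar X}\cdot F_0$), so $C$ occurs with multiplicity $m_C=2$; and as the $(-2)$-curves of $F_0$ lie in $D$ they contribute $0$ to $-(K_{\bar X}+D^\sharp)\cdot F_0$, whence $c=m_C\cdot(-(K_{\bar X}+D^\sharp)\cdot C)=2v$.

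For the first assertion I would take an arbitrary singular fiber $F=\sum_i m_i\Gamma_i$ and play the two identities against each other. Nefness gives $c=\sum_i m_i\bigl(-(K_{\bar X}+D^\sharp)\cdot\Gamma_i\bigr)\ge v\sum_{\Gamma_i\not\subset D}m_i$, so $\sum_{\Gamma_i\not\subset D}m_i\le 2$; in particular the total multiplicity of $(-1)$-curves (none of which lie in $D$) is at most $2$. On the other hand $K_{\bar X}\cdot F=-2$, using $K_{\bar X}\cdot\Gamma_i=-2-\Gamma_i^2$, rewrites as $\sum_{(-1)\text{-curves}}m_i=2+\sum_{\Gamma_i^2\le-3}m_i(-\Gamma_i^2-2)\ge 2$. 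Comparing, both inequalities must be equalities: there are no components with $\Gamma_i^2\le-3$, and since every $(-2)$-curve lies in $D$, every component of $F$ is a $(-1)$- or $(-2)$-curve.

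Finally, for the second assertion I would use that the first part now guarantees $\sum_a m_a=2$ for the $(-1)$-curves $C_1,\dots,C_t$ of $F$ (with multiplicities $m_a$). Combining $c=2v$ with $-(K_{\bar X}+D^\sharp)\cdot C_a\ge v$ for each $a$ yields $2v=c=\sum_a m_a\bigl(-(K_{\bar X}+D^\sharp)\cdot C_a\bigr)\ge v\sum_a m_a=2v$, so every inequality is tight and $-(K_{\bar X}+D^\sharp)\cdot C_a=v$ for all $a$. Thus each $(-1)$-curve in a singular fiber attains the minimal value and is minimal. The only delicate point is the bookkeeping in the third paragraph: the exclusion of $(-3)$-or-worse components and the rigidity of the multiplicities both have to be squeezed out of the same pair of identities, and this works precisely because the special fiber $F_0$ forces $c=2v$ rather than some larger value.
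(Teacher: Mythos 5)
Your proof is correct and follows essentially the same route as the paper: both compare $-(K_{\bar X}+D^\sharp)\cdot F$ across numerically equivalent fibers, using that the distinguished fiber gives the value $2v$ and that minimality of $C$ bounds the total multiplicity of components not in $D$ by $2$, while $K_{\bar X}\cdot F=-2$ forces that bound to be attained, excluding $(-n)$-curves with $n\geq 3$ and making every $(-1)$-curve attain the minimal value. The only difference is that you spell out the steps the paper leaves implicit (why fiber components off $D$ are $(-1)$-curves, and why the multiplicity sum is at least $2$), which is bookkeeping, not a new idea.
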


\begin{proof}
Let $F=2C+\Delta$ be the singular fiber of $\phi$ that contain $C$. Note that $\Supp(\Delta)\subset\Supp(D)$. Let $F'$ be a singular fiber of $\phi$. Then $F'=\sum n_i E_i+\Delta'$, where $E_i$ are $(-1)$-curves, $\Supp(\Delta')\subset\Supp(D)$. Since $F\sim F'$, we see that \[2a=-(2C+\Delta)\cdot(K_{\bar{X}}+D^{\sharp})=-(\sum n_i E_i+\Delta')\cdot(K_{\bar{X}}+D^{\sharp})=\sum n_i e_i\geq(\sum n_i)a,\] where $a=-C\cdot (K_{\bar{X}}+D^{\sharp})\leq -E_i\cdot (K_{\bar{X}}+D^{\sharp})=e_i$. Then $\sum n_i=2$. Hence, every singular fiber of $\phi$ is of type as in Lemma \ref{Zhan0}. Moreover, every $(-1)$-curve in a singular fiber of $\phi$ is minimal.
\end{proof}

The following notion of minimal curves plays an important role in the paper.

\begin{definition}
A curve on $\bar{X}$ is said to be \emph{minimal} if $-C\cdot (K_{\bar{X}}+D^\sharp)$ attains the smallest positive value.
\end{definition}

\begin{lemma}[{\cite[Lemma 2.1]{Zh}}]
\label{Zhan1}
Assume that $|C+D+K_{\bar{X}}|\neq\emptyset$. Then there exists a unique decomposition $D=D'+D''$ such that $C+D''+K_{\bar{X}}\sim 0$ and $C\cdot D_i=D''\cdot D_i=K_{\bar{X}}\cdot D_i=0$ for every irreducible component $D_i$ of $D'$. In particular, $C\cdot D = 2$.
\end{lemma}

\begin{lemma}[{\cite[Lemma 2.2]{Zh}, \cite[Lemma 4.1 and Proof of Theorem 1.2]{B}}]
\label{Zhan2}
Assume that $|C+D+K_{\bar{X}}|=\emptyset$. Then we have the following.
    \begin{enumerate}
    \item $C$ is a $(-1)$-curve.
    \item $C$ intersects each connected component of $D$ at most once.
    \item If $-D_1^2\leq -D_2^2\leq\cdots\leq -D_m^2$, then $(-D_1^2,-D_2^2,\ldots, -D_m^2)$ is equal to one of the following: $(2,\ldots,2,3,5)$, $(2,\ldots,2,3,4)$, $(2,\ldots,2,3,3)$, $(2,\ldots,2,2,k)$ where $k \geq 2$ is an integer.
    \end{enumerate}
\end{lemma}

\begin{lemma}[{\cite[Lemma 2.3]{Zh}}]
\label{Zhan3}
Assume that $C$ meets at least three components $D_1,D_2,D_3$ of $D$. Put $G=2C+D_1+D_2+D_3+K_{\bar{X}}$. Then $G\sim 0$ or $G\sim\Gamma$, where $\Gamma$ is a $(-1)$-curve.
\end{lemma}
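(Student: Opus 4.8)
The plan is to realize $G=2C+D_1+D_2+D_3+K_{\bar X}$ as (a divisor linearly equivalent to) a single effective divisor by an adjunction argument, and then to identify that divisor using the nef and big class $\pi^{*}(-K_X)$ together with the minimality of $C$.

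First I would record the intersection numbers. By Lemma \ref{Zhan2} we may take $C$ to be a $(-1)$-curve, so $C^2=C\cdot K_{\bar X}=-1$ and $C\cdot D_i=1$; moreover $D_1,D_2,D_3$ lie in distinct connected components, so $D_i\cdot D_j=0$ for $i\neq j$, and $D_i\cdot K_{\bar X}=-2-D_i^2$ by adjunction. Writing $E=2C+D_1+D_2+D_3$, a direct computation gives $E^2+E\cdot K_{\bar X}=0$, hence $p_a(E)=1$, as well as $G\cdot E=0$, $G\cdot C=0$, $G\cdot D_i=0$ for $i=1,2,3$, and $G\cdot D_j\geq 0$ for every component $D_j$ of $D$.

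Next I would prove $h^0(G)=1$. Since each $D_i$ meets $C$, the curve $E$ is connected, and $G|_E=(K_{\bar X}+E)|_E=\omega_E$, so adjunction yields the exact sequence $0\to\mathcal O_{\bar X}(K_{\bar X})\to\mathcal O_{\bar X}(G)\to\omega_E\to 0$. As $\bar X$ is a rational surface, $H^0(K_{\bar X})=H^1(K_{\bar X})=0$, while $h^0(\omega_E)=p_a(E)=1$; thus $h^0(G)=1$, and $|G|$ consists of a single effective divisor $G_0$. If $G_0=0$ we are in the first alternative. To control $G_0$ when it is nonzero, set $L=\pi^{*}(-K_X)=-(K_{\bar X}+D^\sharp)$, which is nef and big because $-K_X$ is ample. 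Since $G_0\cdot D_j\geq 0$ for all $j$, we get $0\le G_0\cdot L=-G_0\cdot K_{\bar X}-G_0\cdot D^\sharp\le -G_0\cdot K_{\bar X}$, so $G_0^2=G_0\cdot K_{\bar X}\leq 0$; and if $G_0^2=0$ then $G_0\cdot L=0$ forces $G_0\equiv 0$ by the Hodge index theorem (Theorem \ref{Hodge}), since $L^2>0$, whence $G_0=0$. Therefore $G_0\neq 0$ forces $G_0^2=G_0\cdot K_{\bar X}\leq -1$.

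The remaining, and hardest, step is to show that a nonzero $G_0$ is a single reduced $(-1)$-curve. If $G_0$ is integral then $p_a(G_0)=1+G_0^2\geq 0$ combined with $G_0^2\leq -1$ gives $G_0^2=G_0\cdot K_{\bar X}=-1$, so $G_0$ is a $(-1)$-curve $\Gamma$ and we are done. The obstacle is integrality: the relation $G_0^2=G_0\cdot K_{\bar X}$ is also satisfied by, for instance, a pair of disjoint $(-1)$-curves, for which $h^0=1$ as well, so rigidity alone does not suffice. To exclude such configurations I would use that an integral curve is $\pi$-exceptional (a component of $D$) exactly when it is orthogonal to $L$, while every other integral curve $B$ satisfies $B\cdot L\geq C\cdot L$ by the minimality of $C$; feeding this, together with the vanishings $G_0\cdot C=G_0\cdot D_i=0$ and, if necessary, the conic bundle description of singular fibres from Lemmas \ref{Zhan0} and \ref{Pr1}, into the decomposition of $G_0$ into irreducible components should pin the non-exceptional part down to a single $(-1)$-curve and force the exceptional part to vanish. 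This case analysis on the components of $G_0$ is where the real work lies.
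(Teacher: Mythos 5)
You should first know that the paper contains no proof of this statement at all: Lemma \ref{Zhan3} is quoted verbatim from \cite[Lemma 2.3]{Zh}, so your attempt can only be judged on its own merits. On those merits, the computational skeleton is correct: the intersection numbers, the identities $G\cdot E=G\cdot C=G\cdot D_i=0$ and $G\cdot D_j\geq 0$ for all components $D_j$ of $D$, the nonemptiness of $|G|$ (with one small caveat: $E=2C+D_1+D_2+D_3$ is non-reduced, so $h^0(\omega_E)=p_a(E)$ needs the extra remark that $h^0(\mathcal{O}_E)=1$, which follows since $\mathcal{O}_C(-(E-C))$ has degree $-2<0$; alternatively Riemann--Roch with $h^2(G)=h^0(-E)=0$ already gives $h^0(G)\geq 1$), the Hodge index argument when $G_0^2=0$, and the integral case. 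But your final paragraph is not a proof step; it is an announcement that a case analysis ``should'' work. That case analysis is precisely the content of the lemma --- as you yourself note, rigidity and $G_0^2=G_0\cdot K_{\bar{X}}$ are satisfied by wrong configurations such as two disjoint $(-1)$-curves --- so the proposal has a genuine gap exactly at the hard step.

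What is missing, concretely, is the computation that makes minimality bite, and you never perform it: you only derive $0\leq G_0\cdot L\leq -G_0\cdot K_{\bar{X}}$, which bounds nothing in terms of $a=C\cdot L$. The key point is the exact value of $G\cdot L$: by the projection formula $K_{\bar{X}}\cdot L=K_X\cdot(-K_X)=-K_X^2<0$, and $D_i\cdot L=0$, so $G_0\cdot L=G\cdot L=2a-K_X^2<2a$. Since every integral curve not contained in $D$ has $L$-degree at least $a$ by minimality of $C$, the non-exceptional part of $G_0$ is either $0$ or a single reduced integral curve $B$. If it is $0$, then $G_0$ is exceptional and $G_0^2=G_0\cdot K_{\bar{X}}\geq 0$ contradicts negative definiteness of the exceptional lattice unless $G_0=0$. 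If $G_0=B+N$ with $N\neq 0$ exceptional, one needs the standing hypothesis $|C+D+K_{\bar{X}}|=\emptyset$ (which you never invoke, and which is where this lemma lives in \cite{Zh}) to rule out $B=C$; then $G_0\cdot C=0$ forces $B\cdot C=N\cdot C=0$, so no $D_i$ ($i=1,2,3$) is a component of $N$, whence $B\cdot D_i=N\cdot D_i=0$; now $B\cdot G_0=B\cdot K_{\bar{X}}$ and $N\cdot G_0=N\cdot K_{\bar{X}}$ give $B^2=B\cdot K_{\bar{X}}-B\cdot N$ and $B\cdot N=N\cdot K_{\bar{X}}-N^2\geq 1$, so $p_a(B)=1+B\cdot K_{\bar{X}}-\frac{1}{2}B\cdot N\leq -\frac{1}{2}$ (using $B\cdot K_{\bar{X}}\leq -1$, which holds because $B\cdot L>0$ and $B\cdot D^{\sharp}\geq 0$), a contradiction. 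Only then do you get $G_0=B$ with $B^2=B\cdot K_{\bar{X}}=-1$. Without these three steps --- or some substitute for them --- the proposal proves only the easy part of the lemma.
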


\begin{lemma}
\label{Pr2}
The curve $C$ passes through at most three irreducible components of $D$.
\end{lemma}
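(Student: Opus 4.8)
The plan is to split according to whether the adjoint system $|C+D+K_{\bar{X}}|$ is empty, so that Lemma \ref{Zhan1} applies in the first case and Lemma \ref{Zhan2} in the second, and then to rule out four or more components by a genus computation in the first case and a conic-bundle count in the second. In every case $C$ is not a component of $D$ (its value $-C\cdot(K_{\bar{X}}+D^{\sharp})$ is positive), and each component it meets contributes at least $1$ to the relevant intersection number.

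First suppose $|C+D+K_{\bar{X}}|\neq\emptyset$. By Lemma \ref{Zhan1} we may write $D=D'+D''$ with $C+D''+K_{\bar{X}}\sim 0$ and $C\cdot D_i=0$ for every component $D_i$ of $D'$; hence every component of $D$ meeting $C$ lies in $D''$. From $-K_{\bar{X}}\sim C+D''$ we get $C\cdot D''=-C\cdot K_{\bar{X}}-C^2$, and adjunction gives $-C\cdot K_{\bar{X}}=C^2+2-2p_a(C)$, so that $C\cdot D''=2-2p_a(C)\le 2$. Since every component of $D''$ through which $C$ passes contributes at least $1$ to $C\cdot D''$, and all components of $D$ met by $C$ lie in $D''$, the number of components through which $C$ passes is at most $C\cdot D''\le 2$, settling this case.

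Now suppose $|C+D+K_{\bar{X}}|=\emptyset$ and, for contradiction, that $C$ passes through $D_1,\dots,D_m$ with $m\ge 4$. By Lemma \ref{Zhan2}, $C$ is a $(-1)$-curve, $C\cdot D_i=1$, the $D_i$ lie in distinct connected components of $D$, and $(-D_1^2,\dots,-D_m^2)$ is one of the four listed multisets; in each of them at least two entries equal $2$, so after relabelling $D_1^2=D_2^2=-2$. I would first invoke Lemma \ref{Zhan3} for $D_1,D_2,D_3$: in the case $2C+D_1+D_2+D_3+K_{\bar{X}}\sim 0$, intersecting with $D_4$ gives $2=(2C+D_1+D_2+D_3)\cdot D_4=-K_{\bar{X}}\cdot D_4=2+D_4^2\le 0$, a contradiction. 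For the remaining configurations I would use the two $(-2)$-curves to build a fibration: the class $F=2C+D_1+D_2$ satisfies $F^2=0$, $F\cdot K_{\bar{X}}=-2$, $p_a(F)=0$, so $|F|$ defines a conic bundle $\phi\colon\bar{X}\to\pp^1$ with $C$ in a singular fiber of type $(a)$ of Lemma \ref{Zhan0}. By Lemma \ref{Pr1} every singular fiber of $\phi$ consists only of $(-1)$- and $(-2)$-curves and every fibral $(-1)$-curve is minimal. Each remaining $D_j$ ($3\le j\le m$) satisfies $F\cdot D_j=2$, so $D_3,\dots,D_m$ are $m-2\ge 2$ mutually disjoint $2$-sections contained in $D$. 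Counting the components of $D$ not contained in fibers by Lemma \ref{Zhan0}(1) yields $m-2\le 1+\sum_{F'}(\#\{(-1)\text{-curves in }F'\}-1)$; if every singular fiber is of type $(a)$ or $(b)$ the sum vanishes and $m\le 3$, the desired contradiction.

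The main obstacle is therefore excluding singular fibers carrying two $(-1)$-curves (type $(c)$), which is exactly the branch in which Lemma \ref{Zhan3} returns a $(-1)$-curve $\Gamma$ rather than $0$. In such a fiber both end curves are themselves minimal by Lemma \ref{Pr1}, and each of the disjoint $2$-sections $D_3,\dots,D_m$ meets them in two points total; distributing these intersections forces one end curve again to pass through at least three components of $D$, so one can feed the same dichotomy back into it. Turning this into a contradiction for $m=4$ (where that count gives only three) is the delicate step: I expect to reconcile the $2$-section structure with minimality of all fibral $(-1)$-curves by combining the inequality $(*)$ of Theorem \ref{BMY} over the connected components met by $C$ with the identity $K_{\bar{X}}^2=10-\rho(\bar{X})$, forcing the configuration to violate either $(*)$ or the ampleness of $-K_X$. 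This bookkeeping is the crux of the argument; the two preceding steps are routine.
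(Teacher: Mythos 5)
Your first two steps are sound, and in fact go beyond the paper: the case $|C+D+K_{\bar{X}}|\neq\emptyset$ settled by the adjunction count $C\cdot D''=2-2p_a(C)\le 2$ is not even treated explicitly in the paper (whose proof silently works in the empty case), and in the empty case the sub-case $2C+D_1+D_2+D_3+K_{\bar{X}}\sim 0$ does die immediately on intersecting with $D_4$. But the sub-case $2C+D_1+D_2+D_3+K_{\bar{X}}\sim\Gamma$ is the entire content of the lemma, and there you have a genuine, admitted gap. Worse than a loose end: the type (c) fibers you cannot exclude are \emph{forced to exist} by your own inequality. For $m=4$ the count from Lemma \ref{Zhan0}(1) reads $2\le 1+t$, where $t$ is the number of singular fibers with two $(-1)$-curves, so it never produces a contradiction --- it only proves $t\ge 1$. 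Moreover, a type (c) fiber whose two end curves each meet one fibral component of $D$ and each of the $2$-sections $D_3,D_4$ exactly once is perfectly consistent with minimality of the end curves and with Lemma \ref{Zhan2} (each end curve then meets exactly three components, once each), so ``feeding the dichotomy back'' into the end curves cannot terminate on its own; your proposed appeal to Theorem \ref{BMY} and $K_{\bar{X}}^2=10-\rho(\bar{X})$ is a hope, not an argument.

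The paper's proof supplies precisely the mechanism you are missing: it applies Lemma \ref{Zhan3} to \emph{all four} triples, setting $G_i=2C+\sum_{j\ne i}D_j+K_{\bar{X}}$. Since $G_i\cdot D_i=-D_i^2\ge 2$, none of these classes is trivial, so each $G_i\sim\Gamma_i$ with $\Gamma_i$ a $(-1)$-curve. From $\Gamma_1\cdot D_1=-D_1^2$ and the coefficient bound $\alpha_1\ge\frac{1}{3}$ when $D_1^2\le -3$, one gets $\Gamma_1\cdot(K_{\bar{X}}+D^{\sharp})\ge 0$, a contradiction; hence all four $D_i$ are $(-2)$-curves, and a similar estimate shows the remaining components of $D$ are $(-2)$- or $(-3)$-curves with each $\Gamma_i$ meeting every $(-3)$-curve (such a curve exists since some singularity is non-Du Val). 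The punchline is that $\Gamma_3=2C+D_1+D_2+D_4+K_{\bar{X}}$ satisfies $\Gamma_3\cdot(2C+D_1+D_2)=0$, so it lies in a fiber of your conic bundle $\phi$ and is therefore minimal by Lemma \ref{Pr1}; yet $\Gamma_3\cdot D_3=2$, which is incompatible with Lemma \ref{Zhan2} (a minimal curve with empty adjoint system meets each component of $D$ only once, and the wheel alternative of Lemma \ref{Zhan1} is ruled out because $\Gamma_3$ also meets a $(-3)$-curve outside $D_3$). In short, the contradiction comes not from counting horizontal components but from exhibiting an explicit divisor class that is simultaneously fibral (hence minimal) and has a forbidden intersection pattern with $D$; identifying your type (c) end curve with such a class is the step your proposal lacks.
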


\begin{proof}
Assume that $C$ passes through four irreducible components $D_1,D_2,D_3,D_4$ of $D$. By Lemma \ref{Zhan1} we see that $D_1,D_2,D_3,D_4$ lie in different connected components of $D$. Put $G_1=2C+D_2+D_3+D_4+K_{\bar{X}}$, $G_2=2C+D_1+D_3+D_4+K_{\bar{X}}$, $G_3=2C+D_1+D_2+D_4+K_{\bar{X}}$, $G_4=2C+D_1+D_2+D_3+K_{\bar{X}}$. Since $G_i\cdot D_i\geq 2$ for $i=1,2,3,4$, we see that $G_i\sim\Gamma_i$, where $\Gamma_1,\Gamma_2,\Gamma_3,\Gamma_4$ are $(-1)$-curves (see Lemma \ref{Zhan3}). Note that \[\Gamma_1\cdot D_1=(2C+D_2+D_3+D_4+K_{\bar{X}})\cdot D_1=-D_1^2.\]
Put $\alpha_1$ is the coefficient of $D_1$ in $D^{\sharp}$. Assume that $D_1^2\leq -3$. Then $\alpha_1\geq\frac{1}{3}$. So, $\Gamma_1\cdot(K+D^\sharp)\geq 0$, a contradiction. Hence, $D_1^2=-2$. Then every $D_i^2=-2$ for $i=1,2,3,4$. Note that $G_i\cdot D_k\geq D_k\cdot K_{\tilde{X}}=-D_k^2-2$, where $k\neq 1,2,3,4$. As above, since $\Gamma_i\cdot(K+D^\sharp)< 0$, we see that $D_k^2\geq -3$ and $G_i$ meets every $D_k$ with $D_k^2=-3$. Note that we may assume that there exists a $(-3)$-curve on $\bar{X}$.
Let $\phi\colon\bar{X}\rightarrow\pp^1$ be a $\pp^1$-fibration defined by $|2C+D_1+D_2|$. Note that $\Gamma_3$ is in a fiber of $\phi$. Then $\Gamma_3$ is minimal (see Lemma \ref{Pr1}). On the other hand, $\Gamma_3\cdot D_3=2$ and $\Gamma$ meets every $(-3)$-components, a contradiction with Lemma \ref{Zhan1}.
\end{proof}

\begin{lemma}[{\cite[Chapter 5]{Pro1}}]
\label{WeekD} Let $Y$ be the surface obtained by extracting one irreducible curve from a log del Pezzo surface of Picard number one. Let $f\colon Y \rightarrow Z$ be any divisorial contraction. If $Z$ is a surface of Picard number one with log terminal singularities, then $-K_Z$ is ample, i.e., it is a del Pezzo surface.
\end{lemma}

\section{$\pp^1$-fibration structures on $\bar{X}$}
\label{P1Fib}

In this section we assume that there exists $\pp^1$-fibration $g\colon\bar{X}\rightarrow\pp^1$ such that there exists exactly one horizontal component $D_1$ of $D$. Moreover, we assume that $D_1$ is section or $2$-section and $D_1$ meets three other components of $D$.

\begin{lemma}
\label{DE3}
Assume that $D_1$ is a 2-section of $g$. Then $X$ has one the followings collection of singular points.
\begin{itemize}
\item
Two rational double points of type $A_1$, one singular point $P$ whose dual graph is of the form $$
\xymatrix@R=0.8em{
&&&&&\bullet\\
\star\ar@{-}[r]\ar@{-}[r]&\bullet\ar@{-}[r]&\bullet\ar@{-}[r]&\cdots\ar@{-}[r]&\bullet\ar@{-}[r]&\bullet\ar@{-}[r]\ar@{-}[u]&\bullet
}
$$ and one singular point $Q$ whose dual graph   is of the form
$$
\xymatrix@R=0.8em{
&&\bullet\\
\bullet\ar@{-}[r]\ar@{-}[r]&\bullet\ar@{-}[r]&\ast\ar@{-}[r]\ar@{-}[u]&\star
}\eqno(2)
$$ where $\star$ denotes a $(-3)$-curve, $\bullet$ denotes a $(-2)$-curve, $\ast$ denotes a $(-(k-1))$-curve, $k$ is the number of irreducible components of the minimal resolution of $P$.
\item Two rational double points of type $A_1$ and two singular points $P, Q$ whose dual graph is of the form
$$
\xymatrix@R=0.8em{
&\bullet&&&&\bullet\\
\bullet\ar@{-}[r]&\ast\ar@{-}[r]\ar@{-}[u]&\ast\ar@{-}[r]&\cdots\ar@{-}[r]&\ast\ar@{-}[r]&\ast\ar@{-}[r]\ar@{-}[u]&\bullet
}
$$
where $\bullet$ denotes a $(-2)$-curve, the linear chain of $\ast$'s denotes $R_{k1}$.
\end{itemize}
\end{lemma}

\begin{proof}
By \ref{Zhan0} we see that every singular fiber of $g$ contains only one $(-1)$-curve. Moreover, the multiplicity of $(-1)$-curve in fibers of $g$ is at least two, we see that every singular fiber meets $D_1$ in one or two points.


By Hurwitz formula we see that there exist at most two singular fibers that meet $D_1$ in one point. Since $D-D_1$ has six connected components and every singular fiber contains at most two connected components of $D-D_1$, we see that there exists a singular fiber $F_1$ that is of type (a) in Lemma \ref{Zhan0}. So, $F_1=2E_1+D_2+D_3$, where $E_1$ meets $D_1$ and $D_2,D_3$ are isolated components of $D$ that correspond to singular point of type $A_1$. Then there exists a singular fiber $F_2$ that contains two connected components of $D$ that meet $D_1$, and there exists a singular fiber $F_3$ that contains only one components of $D$ that meets $D_1$. Note that the dual graph of $F_2$ is one of the followings
$$
\xymatrix@R=0.8em{\\
\bullet\ar@{-}[r]\ar@{-}[r]&\circ\ar@{-}[r]&\bullet
}\eqno(I)
$$
$$
\xymatrix@R=0.8em{\\
\star\ar@{-}[r]\ar@{-}[r]&\circ\ar@{-}[r]&\bullet\ar@{-}[r]&\bullet
}\eqno(II)
$$ where $\star$ denotes a $(-3)$-curve, $\bullet$ denotes a $(-2)$-curve, $\circ$ denotes a $(-1)$-curve.
Let $\phi\colon\bar{X}\rightarrow Y$ be the consequence of contractions of $(-1)$-curves in $F_2$ and $F_3$. We have the followings picture on $Y$:

\[\begin{tikzpicture}[scale=0.5]
\draw  (0.5,0.5) -- (11,0.5) (0,2)--(3,2) (0,4)--(3,4) (3.5,0)--(3.5,5) (5,0)--(5,5) (6,2)--(9,2) (6,4)--(9,4); \draw [dashed] (1,0) -- (1,5) (3,4.5)--(5.5,4.5) (6.5,0) -- (6.5,5); \draw (2,0.5) node [below] {$D_1$};
\end{tikzpicture}
\]
where dotted lines are $(-1)$-curves solid lines except $D_1$ are $(-2)$-curves. Moreover, every dotted line and two solid lines is fiber.
Since there exists no a del Pezzo with five singular points and Picard number one, we see that $D_1^2\geq -1$ on $Y$. Note that $K_Y^2=2$. Hence, we may contract $(-1)$-curves such that we obtain $\psi\colon Y\rightarrow\pp^1\times\pp^1$ and $D_1$ is of type $(2,1)$ on $\pp^1\times\pp^1$. So, $D_1$ is a $(-1)$-curve. Then if $F_2$ has a type $(II)$, then we obtain the singular points in the first item. If $F_2$ has a type $(I)$, then we obtain the singular points in the second item.
\end{proof}

\begin{lemma}
\label{DE4}
Assume that $D_1$ is a section of $g$. Then $X$ has the singular points whose dual graph of minimal resolution is one of the followings
$$
\xymatrix@R=0.8em{
&F_1\ar@{-}[d]&\\
F_2\ar@{-}[r]\ar@{-}[r]&\ast\ar@{-}[r]& F_3
}
$$ where $\ast$ denotes a $(-n)$-curve, the dual graph of $F_1$ is the following
$$
\xymatrix@R=0.8em{\\
\bullet\ar@{-}[r]\ar@{-}[r]&\circ\ar@{-}[r]&\bullet
}
$$
the dual graphs of $F_2$ and $F_3$ are the followings
\begin{enumerate}
\item $$F_2\colon\xymatrix@R=0.8em{\\
\star\ar@{-}[r]\ar@{-}[r]&\circ\ar@{-}[r]&\bullet\ar@{-}[r]&\bullet
}\quad \xymatrix@R=0.8em{\\
\bullet\ar@{-}[r]\ar@{-}[r]&\bullet\ar@{-}[r]&\circ\ar@{-}[r]&\star
}$$
$$F_3\colon\xymatrix@R=0.8em{\\
\star\ar@{-}[r]\ar@{-}[r]&\circ\ar@{-}[r]&\bullet\ar@{-}[r]&\bullet
}\quad \xymatrix@R=0.8em{\\
\bullet\ar@{-}[r]\ar@{-}[r]&\bullet\ar@{-}[r]&\circ\ar@{-}[r]&\star
}$$
$$\xymatrix@R=0.8em{\\
\star'\ar@{-}[r]\ar@{-}[r]&\circ\ar@{-}[r]&\bullet\ar@{-}[r]&\bullet\ar@{-}[r]&\bullet
}\quad \xymatrix@R=0.8em{\\
\bullet\ar@{-}[r]\ar@{-}[r]&\bullet\ar@{-}[r]&\bullet\ar@{-}[r]&\circ\ar@{-}[r]&\star'
}$$
$$\xymatrix@R=0.8em{\\
\star''\ar@{-}[r]\ar@{-}[r]&\circ\ar@{-}[r]&\bullet\ar@{-}[r]&\bullet\ar@{-}[r]&\bullet\ar@{-}[r]&\bullet
}\quad \xymatrix@R=0.8em{\\
\bullet\ar@{-}[r]\ar@{-}[r]&\bullet\ar@{-}[r]&\bullet\ar@{-}[r]&\bullet\ar@{-}[r]&\circ\ar@{-}[r]&\star''
}$$
$$\xymatrix@R=0.8em{\\
\star\ar@{-}[r]\ar@{-}[r]&\bullet\ar@{-}[r]&\circ\ar@{-}[r]&\star\ar@{-}[r]&\bullet
}\quad \xymatrix@R=0.8em{\\
\bullet\ar@{-}[r]&\star\ar@{-}[r]&\circ\ar@{-}[r]&\bullet\ar@{-}[r]&\star
}$$ where $\star$ denotes a $(-3)$-curve, $\bullet$ denotes a $(-2)$-curve, $\circ$ denotes a $(-1)$-curve.
\item the dual graph of $F_2$ is the following  $$
\xymatrix@R=0.8em{\\
\bullet\ar@{-}[r]\ar@{-}[r]&\circ\ar@{-}[r]&\bullet
}
$$ $F_3=R_{ks}$ for $s=1,3,4,\ldots$, and $X$ has one more singular point of type $A_{s-2}$ for $s=3,4,\ldots$.
\end{enumerate}
\end{lemma}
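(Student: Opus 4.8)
The plan is to continue the conic-bundle analysis of Lemmas~\ref{DE2} and~\ref{DE3}, now with $D_1$ a section of the induced conic bundle $\bar g\colon\bar X\to\pp^1$. First I would note that $D_1$ is the only component of $D$ not contained in a fibre of $\bar g$ (all others are contracted by $f$), so the left-hand side of Lemma~\ref{Zhan0}(1) equals $1$ and every singular fibre contains a unique $(-1)$-curve $C$. Since $-K_{\bar X}\cdot F=2$ for a fibre $F$, while $-K_{\bar X}\cdot C=1$ and $-K_{\bar X}\cdot D_i\leq 0$ for each component $D_i\subset D$, writing $F=m_CC+\sum m_iD_i$ forces $m_C\geq 2$. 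As $D_1\cdot F=1$, the section cannot pass through $C$; hence in every fibre $D_1$ meets a component of $D$, which must be a leg of the fork $P$.

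Because $P$ is a fork with exactly three legs, meeting $D_1$ in three distinct points, there are exactly three singular fibres $F_1,F_2,F_3$, one per leg. I would exclude the types (b) and (c) of Lemma~\ref{Zhan0}: type (c) has two $(-1)$-curves, and in a type (b) comet the section $D_1$ (being unable to meet the multiplicity-$\geq 2$ curve $C$) would attach to the connected comet body, joining the centre of $P$ to a second valence-three vertex and so violating log terminality. Thus each $F_i$ is a chain whose interior $(-1)$-curve meets exactly two components of $D$: a leg of $P$ on one side and a cyclic singular point of $X$ on the other. Counting gives $P$ together with three cyclic points, the required four.

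To list the chains I would use the classification of log terminal singularities (\cite{Br}): the fork $P$ has leg determinants $p_1,p_2,p_3$ with $1/p_1+1/p_2+1/p_3>1$, so up to order $(p_1,p_2,p_3)$ is $(2,2,r)$ or one of $(2,3,3),(2,3,4),(2,3,5)$. A determinant-$2$ leg is a single $(-2)$-curve, and the only genus-zero chain realizing it is $\bullet-\circ-\bullet$, with an $A_1$ companion; this is the distinguished fibre $F_1$ (and, in the dihedral case, also $F_2$). For the remaining legs the chain is determined by feeding the minimal curve into Lemmas~\ref{Zhan1} and~\ref{Zhan2}: when $|C+D+K_{\bar X}|=\emptyset$, Lemma~\ref{Zhan2} restricts the two self-intersections met by $C$ to a pair $(2,k)$—the pairs $(3,3),(3,4),(3,5)$ are impossible because chains of the form $\star-\circ-\star$ are not genus-zero fibres—and $1/p_1+1/p_2+1/p_3>1$ bounds the remaining leg determinant by $5$, giving the finite list of part (1) with its $(-3),(-4),(-5)$-curves; when $|C+D+K_{\bar X}|\neq\emptyset$, Lemma~\ref{Zhan1} produces the decomposition that builds the unbounded rod $R_{ks}$ with its companion $A_{s-2}$ of part (2), corresponding to the dihedral family $(2,2,r)$.

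The hard part will be this final enumeration. For each admissible leg one must reconstruct the whole chain $F_i$—the interior position of its $(-1)$-curve and the two Hirzebruch--Jung chains of $D$ glued to it—decide at which end $D_1$ is attached (which accounts for the mirrored pairs of diagrams recorded for $F_2$ and $F_3$), and then confirm in every resulting configuration that $-K_X$ stays ample, that $\rho(X)=1$, and that exactly four log terminal points occur. Verifying that $R_{ks}$ splits off precisely the claimed $A_{s-2}$ and nothing more, and that no chain outside the stated list passes the ampleness test, is where the bulk of the casework will lie.
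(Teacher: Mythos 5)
Your first two paragraphs reproduce the paper's own argument: $D_1$ is the only horizontal component of $D$, so Lemma~\ref{Zhan0}(1) gives a unique $(-1)$-curve in each singular fibre; its multiplicity is at least two because $K_{\bar{X}}\cdot F=-2$ while $K_{\bar{X}}\cdot D_i\geq 0$ for every component of $D$; hence each singular fibre meets the section $D_1$ inside $D$, i.e.\ at one of $P_1,P_2,P_3$, and there are exactly three singular fibres, each containing one leg of $P$ together with remaining singular points. Up to this point you and the paper agree.

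The genuine gap is your structural claim that each $F_i$ is a \emph{chain} whose $(-1)$-curve meets exactly two components of $D$, an end of a leg of $P$ on one side and one cyclic point on the other; your entire enumeration rests on this, and it is false --- it even contradicts part (2) of the statement being proved. For $s\geq 3$ the fibre $F_3$ there is a branched tree: the whole rod $R_{ks}$ is the third leg of $P$, the $(-1)$-curve is attached to the \emph{interior} $(-s)$-curve of that leg, and the companion chain $A_{s-2}$ hangs off the $(-1)$-curve. Concretely, let $L_1-L_2-L_3$ be a chain with self-intersections $-2,-3,-2$, attach a $(-1)$-curve $E$ to $L_2$ and a single $(-2)$-curve $G$ to $E$; then $F=L_1+2L_2+L_3+4E+2G$ satisfies $F\cdot L_1=F\cdot L_2=F\cdot L_3=F\cdot E=F\cdot G=0$ and $F\cdot K_{\bar{X}}=-2$, so it is a numerically legitimate singular fibre with a unique $(-1)$-curve, met by the section $D_1$ in the multiplicity-one end $L_1$: the leg of $P$ is the whole chain $L_1-L_2-L_3$, the fourth singular point of $X$ is the $A_1$-point $G$, and this is exactly part (2) with $s=3$. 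Your type-(b) exclusion does not eliminate such fibres, because the branch vertex $L_2$ has the $(-1)$-curve as its third neighbour, which is not a component of $D$; the dual graph of $P$ therefore remains a fork with a single valence-three vertex, and log terminality is untouched. Consequently the casework you outline, being restricted to chains, would omit (or wrongly ``exclude'') the entire infinite family $R_{ks}+A_{s-2}$, $s\geq 3$; your proposal is even internally inconsistent, since the third paragraph claims Lemma~\ref{Zhan1} rebuilds this family while the second paragraph leaves the companion $A_{s-2}$ nowhere to live. Note that the paper never asserts the fibres are chains: it records only that $F_i$ contains $P_i$ and $Q_i$ and then invokes the classification of log terminal singularities, which includes these branched fibre shapes.
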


\begin{proof}
By \ref{Zhan0} we see that every singular fiber of $g$ contains only one $(-1)$-curve. Since the multiplicity of $(-1)$-curve in fibers of $g$ is at least two, we see that every singular fiber contains connected component of $D-D_1$. By classification of log terminal singular points we obtain required classification.
\end{proof}

\section{The case $|C+D+K_{\bar{X}}|\neq\emptyset$}\label{NoEmpty}
We assume that every singular point is cyclic unless it is a rational double point. See Section \ref{noncycDV} for the case where there is a non-cyclic singular point that is not a rational double point. Throughout this section, we assume that $|C+D+K_{\bar{X}}|\neq\emptyset$ where $C$ denotes a minimal curve. So, by Lemma \ref{Zhan1}, we see that there exists a decomposition $D=D'+D''$ such that $C+D''+K_{\bar{X}}\sim 0$. We see that $C+D''$ is a wheel and $D'$ consists of $(-2)$-curves.

\begin{lemma}
\label{C(-1)curve}
Assume that $|C+D+K_{\bar{X}}|\neq\emptyset$. Then $C$ is a $(-1)$-curve.
\end{lemma}

\begin{proof}
By Lemma \ref{Zhan1} there exists a decomposition $D=D'+D''$ such that $C+D''+K_{\bar{X}}\sim 0$ and $C\cdot D_i=D''\cdot D_i=K_{\bar{X}}\cdot D_i=0$ for every irreducible component $D_i$ of $D'$. Then $C$ is a smooth rational curve and $C\cdot D=C\cdot D''=2$. Assume that $C$ meets components $D_1$ and $D_2$ of $D$ and $\alpha_1,\alpha_2$ are the coefficients of $D_1$ and $D_2$ in $D^{\sharp}$ (maybe $D_1=D_2$ and $\alpha_1=\alpha_2$). Note that $\alpha_1<1$, $\alpha_2<1$. So, $$-C\cdot(K_{\bar{X}}+D^{\sharp})=-C\cdot K_{\bar{X}}-\alpha_1-\alpha_2>-C\cdot K_{\bar{X}}-2.$$ On the other hand, let $E$ be a $(-1)$-curve. We have $-E\cdot(K_{\bar{X}}+D^{\sharp})<1$. Hence, $-C\cdot K_{\bar{X}}<3$. So, $C$ is either a $(0)$-curve, either a $(-1)$-curve. Consider a $\pp^1$-fibration $\phi\colon\bar{X}\rightarrow\pp^1$ defined by $C$. Since we may assume that $\#D>1$, we see that there exists a singular fiber $F$ of $\phi$. Put $E_1,E_2,\ldots, E_m$ are $(-1)$-curves in $F$ and $n_1,n_2,\ldots,n_k$ are multiplicity of those curves in $F$. Note that $\sum n_i\geq 2$. Since $F\cdot(K_{\bar{X}}+D^{\sharp})=C\cdot(K_{\bar{X}}+D^{\sharp})$, we see that $-E_i\cdot(K_{\bar{X}}+D^{\sharp})<-C\cdot(K_{\bar{X}}+D^{\sharp})$, a contradiction.
\end{proof}

Note that $C$ is a $(-1)$-curve. Let $P_1, P_2, P_3, P_4$ be singular points of $X$. We may assume that $P_4$ corresponds to $D''$. Let $D^{(1)},D^{(2)},D^{(3)}$ be connected components of $D$ that correspond to $P_1,P_2,P_3$ respectively. Let $\phi\colon\bar{X}\rightarrow Y$ be the consequence of contraction of $(-1)$-curves in $C+D''$. We obtain one of the followings cases\\
\begin{tikzpicture}[scale=0.7]
\draw  (0,0.5) -- (4,0.5) (0,3.5)--(4,3.5) (3,4)--(5,2.5) (3,0)--(5,1.5) (6,0.5) -- (10,0.5) (6,3.5)--(10,3.5) (9.5,0)--(9.5,4) (11,4)--(13,2) (11,1)--(13,3); \draw [dashed] (0.5,0) -- (0.5,4) (6.5,0) -- (6.5,4) (11.5,0.5) -- (11.5,4); \draw (2,0) node {$(a)$} (8,0) node {$(b)$} (8,0.5) node [above] {$D_1$} (8,3.5) node [above] {$D_2$} (12,1) node {$D_1$} (12,3.5) node {$D_2$} (12,0) node {$(c)$} (2,0.5) node [above] {$D_1$} (2,3.5) node [above] {$D_2$} (0.5,1.7) node [right] {$C'$} (6.5,2) node [right] {$C'$} (11.5,4.5) node [right] {$C'$};
\draw (15,1) ..controls  (18,3) .. (15,4); \draw [dashed] (16,0.5) -- (16,4); \draw (16.5,0) node {$(d)$} (17.5,2.5) node [right] {$D_3$} (16,2) node [left] {$C'$};
\end{tikzpicture}\\
where $C'$ is a $(-1)$-curve, $D_1,D_2$ are $(-2)$-curve, $D_3$ is either a $(-2)$-curve or a $(-3)$-curve. Indeed, if otherwise, $C+D$ is not SNC. By blowing up the non-SNC point sufficiently many times, and then by contracting all $(-n)$-curves with $n \geq 2$ we get a log del Pezzo surface of Picard number one with more than $4$ singular points, a contradiction to \cite{B}.

Consider the case (a). Note that there exists a $\pp^1$-fibration $Y\rightarrow\pp^1$ induces by $|2C'+D_1+D_2|$. So, there exists a $\pp^1$-fibration $g\colon\bar{X}\rightarrow\pp^1$ such that there exist exactly two components $D_3,D_4$ of $D''$ such that $D_3,D_4$ are sections of $g$, and every component of $D'$ is in fiber of $g$. Since $C$ is a $2$-section,  each of $P_1,P_2,P_3$ is of type $A_n$. By Lemma \ref{Zhan0} we see that there exists exactly one singular fiber of $g$ that has two $(-1)$-curves. On the other hand, there exists at most one singular fiber that has one $(-1)$-curve and this singular fiber has at most one of $D^{(1)},D^{(2)},D^{(3)}$. A contradiction.

Consider the case (b). Note that there exists a $\pp^1$-fibration $g\colon\bar{X}\rightarrow\pp^1$ such that there exists exactly one component $D_3$ of $D''$ such that $D_3$ is a 2-section of $g$ and every component of $D$ except $D_3$ is in fiber of $g$. Hence, as in Lemma \ref{DE3}, we see that the minimal resolution of $P_4$ has the following dual graph
$$\xymatrix@R=0.8em{
\ast \ar@{-}@/^/[r]\ar@{-}@/_/[r] & R_{k1}
}$$
where $\ast$ is a $(-n)$-curve, $P_1$, $P_2$ are of type $A_1$ and $P_3$ is of type $D_{n+1}$ ($D_3=A_3$).

Consider the case (c). Note that there exists no a $(-n)$-curves on $Y$ for $n\geq 3$. Let $\psi\colon Y\rightarrow\bar{Y}$ be the contraction of all $(-2)$-curves. We obtain a del Pezzo surface of Picard number one with at worst rational double points. Moreover, $$K_{\bar{Y}}^2=K_Y^2=(C'+D_1+D_2)^2=1.$$ By classification, we see that $Y$ has one of the followings collection of singularities $E_6+A_2$, $A_5+A_2+A_1$, $4A_2$. Since $\bar{Y}$ has four singular points, we see that the collection of singularities of $\bar{Y}$ is $4A_2$. Note that there exists a $(-1)$-curve that meets two components that correspond to one singular point (see, for example, {\cite{AN}}). Then $X$ has one singular point that the exceptional divisor is a linear chain of rational curves $D_1,D_2,\ldots, D_r$ with followings collection of $D_1^2,-D_2^2,\ldots,-D_r^2$
$$m_1,\overbrace{2,\ldots,2}^{m_2-1},m_3+2,2,\ldots,2,m_{k-1}+2,\overbrace{2,\ldots,2}^{m_k-1},m_k+1,2,\ldots,2,m_2+2,\overbrace{2,\ldots,2}^{m_1-2}$$ and three singular points of type $A_2$.

Consider the case (d) and $D_3^2=-2$. As above, let $\psi\colon Y\rightarrow\bar{Y}$ be the contraction of all $(-2)$-curves. We obtain a log del Pezzo surface of Picard number one with at worst rational double points. Moreover, $$K_{\bar{Y}}^2=K_Y^2=(C'+D_3)^2=1.$$ By classification, we see that $Y$ has one of the followings collection of singularities $E_7+A_1$, $A_7+A_1$, $A_5+A_2+A_1$, $D_6+2A_1$, $2A_3+2A_1$. Since $\bar{Y}$ has four singular points, we see that the collection of singularities of $\bar{Y}$ is $2A_3+2A_1$. Then $X$ has one singular point that the exceptional divisor is a linear chain of rational curves $D_1,D_2,\ldots, D_r$ with followings collection of $D_1^2,-D_2^2,\ldots,-D_r^2$
$$m_1,\overbrace{2,\ldots,2}^{m_2-1},m_3+2,2,\ldots,2,m_{k-1}+2,\overbrace{2,\ldots,2}^{m_k-1},m_k+2,2,\ldots,2,m_2+2,\overbrace{2,\ldots,2}^{m_1-2}$$ two singular points of type $A_3$ and one singular point of type $A_1$.

Consider the case (d) and $D_3^2=-3$. Let $h\colon Y\rightarrow Y'$ be the contraction of $C'$. Let $\psi\colon Y'\rightarrow\bar{Y}$ be the contraction of all $(-2)$-curves. We obtain a log del Pezzo surface of Picard number one with at worst rational double points. Moreover, $$K_{\bar{Y}}^2=K_{Y'}^2=1.$$ By classification, we see that $Y$ has one of the followings collection of singularities $E_7+A_1$, $A_7+A_1$, $A_5+A_2+A_1$, $D_6+2A_1$, $2A_3+2A_1$. Since $\bar{Y}$ has three singular points, we see that the collection of singularities of $\bar{Y}$ is $A_1+A_2+A_5$ or $D_6+2A_1$. The case $D_6+2A_1$ is impossible (see {\cite{Ma}, \cite{Ye} \cite{Zan}}).  Then $X$ has one singular point that the exceptional divisor is a linear chain of rational curves $D_1,D_2,\ldots, D_r$ with followings collection of $D_1^2,-D_2^2,\ldots,-D_r^2$
$$m_1,\overbrace{2,\ldots,2}^{m_2-1},m_3+2,2,\ldots,2,m_{k-1}+2,\overbrace{2,\ldots,2}^{m_k-1},m_k+3,2,\ldots,2,m_2+2,\overbrace{2,\ldots,2}^{m_1-2}$$ and the following collection of singularities $A_1+A_2+A_5$.

\section{The case $|C+D+K_{\bar{X}}|=\emptyset$}\label{Empty}
As in Section \ref{NoEmpty}, we assume that every singular point that is not a rational double point is cyclic. See Section \ref{noncycDV} for the case where there is a non-cyclic singular point that is not a rational double point. In this section we assume that $|E+D+K_{\bar{X}}|=\emptyset$ for every minimal curve $E$.
By Lemma \ref{Zhan2} (1), we may assume that $C$ is a $(-1)$-curve. By Lemma \ref{Pr2} we see that $C$ meets at most three component of $D$.

Assume that $C$ meets three component $D_1, D_2, D_3$ of $D$. We may assume that $D_1^2= -2$ (see Lemma \ref{Zhan2}). Note that we have the followings cases for $(-D_2^2,-D_3^2)$: $(2,n)$, $(3,3)$, $(3,4)$, $(3,5)$.

Assume that $D_1, D_2, D_3$ correspond to singularities $P_1,P_2,P_3$. Put $P_4$ is a fourth singular point. Let $D^{(1)}, D^{(2)}, D^{(3)}, D^{(4)}$ be the connected component of $D$ correspond to $P_1, P_2, P_3, P_4$ correspondingly. Let $m_1, m_2, m_3, m_4$ be the orders of local fundamental groups of $P_1, P_2, P_3, P_4$ correspondingly.

Since $-C\cdot(K_{\bar{X}}+D^{\sharp})>0$, we have the followings collection for $(-D_1^2,-D_2^2,-D_3^2)$: $(2,2,m)$, $(2,3,3)$, $(2,3,4)$, $(2,3,5)$. By Lemma \ref{Zhan3} we see that \[2C+D_1+D_2+D_3+K_{\bar{X}}\sim 0\text{ or }2C+D_1+D_2+D_3+K_{\bar{X}}\sim \Gamma,\] where $\Gamma$ is a $(-1)$-curve.

\subsection{The case $(2,3,5)$}
Assume that $2C+D_1+D_2+D_3+K_{\bar{X}}\sim 0$. Since $-K_{\bar{X}}\sim 2C+D_1+D_2+D_3$ and $K_{\bar{X}}^2=10-\rho(\bar{X})=9-\# D$, we see that $\#D=9-K_{\bar{X}}\cdot(2C+D_1+D_2+D_3)=11$. We obtain $\# D'=8$, where $D-D_1-D_2-D_3$. Let $\bar{X}\rightarrow Y$ be the consequence of contractions $C$ and $D_1$. Let $Y\rightarrow\bar{Y}$ be the contraction of all $(-n)$-curves ($n=2,3$). Note that $\bar{Y}$ has one rational double point of type $A_8$, $D_8$ or $E_8$, and one triple singular point. Assume that the rational double point is of type $A_8$ or $D_8$. By {\cite{Zan}} there exists a $(-1)$-curve $E$ on $\bar{X}$ such that $E\cdot D_3=1$ and $E$ meets component $D_4$ of $D'$ and $D_4$ is not an end component of $D'$. Let $W\rightarrow\bar{X}$ be the blow up of the intersection point of $D_4$ and $E$. Let $W\rightarrow W'$ be the contraction of all $(-n)$-curves ($n\geq 2$). We obtain a del Pezzo surface $W'$ with log terminal singularities and $\rho(W')=1$, a contradiction with Theorem \ref{BMY}. So, we see that $D'$ is $E_8$, which will be treated in  Lemma \ref{DE4}. Assume that $2C+D_1+D_2+D_3+K_{\bar{X}}\sim \Gamma$. Let $\alpha_2,\alpha_3$ be the coefficients of $D_2$ and $D_3$ in $D^\sharp$. Note that $\alpha_2\geq\frac{1}{3}$, $\alpha_3\geq\frac{3}{5}$. Assume that $D_2$ is not isolated component of $D$. Then $\alpha_2\geq\frac{2}{5}$. Assume that $D_3$ is not isolated component of $D$. Then $\alpha_3\geq\frac{2}{3}$. So, if $D_2$ and $D_3$ are not isolated components of $D$, then $\alpha_2+\alpha_3\geq 1$. Hence, $-C\cdot(K_{\bar{X}}+D^{\sharp})\leq 0$, a contradiction. So, $D_2$ and $D_3$ are isolated components of $D$.

Assume that $D_1$ is also an isolated component of $D$. Note that $\Gamma$ meets every component of $D$ except $D_2, D_3$ with self-intersection is less than $-2$. Note that $\Gamma\cdot D_k=-D_k^2+2$ for every irreducible component $D_k$ of $D$ except $D_2, D_3$. Note that the coefficient of $D_k$ in $D$ is at least $\frac{-D_k^2-2}{D_k^2}$. Since $\Gamma\cdot(K_{\bar{X}}+D^{\sharp})<0$, we see that $\Gamma$ meets only curves with self-intersection $-3$. Assume that $\Gamma$  meets only one component $D_4$ of $D$. Let $g\colon\bar{X}\rightarrow Y$ be the contraction of $\Gamma$. Then it is easy to see that the Picard group is generated by irreducible components of $g(D)$, and the intersection matrix of the irreducible components of $g(D)$ is negative definite, a contradiction with \ref{Hodge}. Assume that $\Gamma$  meets at least two components $D_4, D_5$ of $D$. Since $D_4, D_5$ lie in one connected component of $D$, we see that the coefficients of $D_4, D_5$ in $D^{\sharp}$ are at least $\frac{1}{2}$. Then $\Gamma\cdot(K_{\bar{X}}+D^{\sharp})\geq 0$, a contradiction. So, $D_1$ meets a component $D_4$ of $D$. Since $C\cdot(K_{\bar{X}}+D^{\sharp})<0$, we see that $D_4^2=-2$. Let $P_1, P_2, P_3, P_4$ be the singular points of $X$. We may assume that $D_1$ corresponds to $P_1$, $D_2$ corresponds to $P_2$, $D_3$ corresponds to $P_3$. Note that $\#D=12$. By theorem \ref{BMY} we see that $P_4$ is of type $A_1$.
Assume that $D_1$ meets one more component $D_5$ of $D$. We see that $D_5^2=-2$. Consider a $\pp^1$-fibration $\phi\colon\bar{X}\rightarrow\pp^1$ defined by $|2C+2D_1+D_4+D_5|$. Put $D_6$ is the component of $D$ over $P_4$. Let $F$ be the fiber of $\phi$ that contains $D_6$. By Lemma \ref{Pr1} we see that $F$ contains of $(-1)$- and $(-2)$-curves and every $(-1)$-curve in $F$ is minimal. So, $F$ is of type (a) or (c) in Lemma \ref{Zhan0}. If $F$ is of type (a), then there exists a minimal curve (a unique $(-1)$-curve in $F$) that meets four irreducible components of $D$ ($D_2,D_3$ and two curves in $F$), a contradiction with Lemma \ref{Pr2}.
If $F$ is of type (c), then there exists a minimal curve that meets three isolated irreducible components $D_2, D_3, D_6$ of $D$. We have already consider this case. So, we may assume that $D_1$ meets only one component $D_4$ of $D$. Note that $\Gamma$ meets $D_4$ and maybe one more component $D_5$ of $D$ such that $D_5^2=-3$. Let $\bar{X}\rightarrow Y$ be the contraction of $\Gamma$ and $Y\rightarrow\bar{Y}$ be the contraction of all $(-n)$-curves ($n\geq 2$). Then $\rho(\bar{Y})=1$ and $\bar{Y}$ has five log terminal singular points, a contradiction.

\subsection{The case $(2,3,4)$} Assume that $2C+D_1+D_2+D_3+K_{\bar{X}}\sim 0$. Since $-K_{\bar{X}}\sim 2C+D_1+D_2+D_3$, we see that $\# D'=7$. Let $\bar{X}\rightarrow Y$ be the consequence of contractions $C$ and $D_1$. Let $Y\rightarrow\bar{Y}$ be the contraction of all $(-2)$-curves. Note that $\rho(\bar{Y})=1$ and $\bar{Y}$ has one singular point of type $A_7$, $D_7$ or $E_7$, and one singular point of type $A_1$. By classification (see, for example, {\cite{Fur}}, {\cite{AN}}), we see that $D_7$ is impossible.

Assume that the rational double point is of type $A_7$. By {\cite{AN}} there exists a $(-1)$-curve $E$ on $\bar{X}$ such that $E\cdot D_3=1$ and $E$ meets component $D_4$ of $D'$. Let $W\rightarrow\bar{X}$ be the blow up of the intersection point of $D_4$ and $E$. Let $W\rightarrow W'$ be the contraction of all $(-n)$-curves ($n\geq 2$). We obtain a del Pezzo surface $W'$ with log terminal singularities and $\rho(W')=1$, a contradiction with \ref{BMY}. So, we see that $D'$ is $E_7$, which will be treated in Lemma \ref{DE4}.

Now, we assume that $C$ meets three component $D_1$, $D_2$, $D_3$ of $D$ with $D_1^2=-2$, $D_2^2=-3$, $D_3^2=-4$. Moreover, assume that $2C+D_1+D_2+D_3+K_{\bar{X}}\sim \Gamma$.

\begin{lemma}[{\cite{GZ}, \cite{KT}}]
\label{Case234-1}
Assume that $2C+D_1+D_2+D_3+K_{\bar{X}}\sim \Gamma$. Then
\begin{enumerate}
\item $\#D=11$;
\item $D-D_2-D_3$ contains only $(-2)$- and $(-3)$-curves;
\item at least one of $D_1$, $D_2$, $D_3$ is not isolated component of $D$;
\item $D_2$ and $D_3$ meet at most one component of $D$;
\item $D-D_2-D_3$ contains one or two $(-3)$-curves;
\item every $D^{(i)}$ contains at most one $(-n)$-curve ($n=3,4$);
\item $D_1$ does not meet a $(-3)$-curve;
\item $D_1$ meets at most one component of $D$. Moreover, $P_1$ is not a rational double point of type $D_n$ or $E_n$;
\end{enumerate}
\end{lemma}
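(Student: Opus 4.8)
The plan is to extract everything from two inequalities coming from the nefness of $\pi^*(-K_X)=-(K_{\bar X}+D^\sharp)$, together with the discrepancy recursion. Set $\Gamma=2C+D_1+D_2+D_3+K_{\bar X}$. Since $D_1,D_2,D_3$ are a $(-2)$-, a $(-3)$- and a $(-4)$-curve in distinct connected components and $C\cdot D_i=1$, a direct computation gives $\Gamma\cdot C=\Gamma\cdot D_1=\Gamma\cdot D_2=\Gamma\cdot D_3=0$, while for every other component $\Gamma\cdot D_k=(D_1+D_2+D_3)\cdot D_k-2-D_k^2\ge 0$. As neither $C$ nor $\Gamma$ is $\pi$-exceptional, $-(K_{\bar X}+D^\sharp)$ is strictly positive on each, giving the two master inequalities: (I) $\alpha_1+\alpha_2+\alpha_3<1$ (from $C$), and (II) $\sum_k\alpha_k(\Gamma\cdot D_k)<1$ (from $\Gamma$). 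Finally I record the recursion $\alpha_i=\big((-D_i^2-2)+\sum_{j}\alpha_j\big)/(-D_i^2)$, the sum over $D_j$ meeting $D_i$, obtained from $(K_{\bar X}+D^\sharp)\cdot D_i=0$; in particular $\alpha_i\ge(-D_i^2-2)/(-D_i^2)$, so $\alpha_2\ge\tfrac13$, $\alpha_3\ge\tfrac12$, whence (I) forces $\alpha_1<\tfrac16$, $\alpha_2<\tfrac12$, $\alpha_3<\tfrac23$.

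For (1) I use $-K_{\bar X}\sim 2C+D_1+D_2+D_3-\Gamma$. Expanding and using the vanishing intersections above gives $K_{\bar X}^2=(2C+D_1+D_2+D_3)^2+\Gamma^2=-1-1=-2$; since $\bar X$ is a smooth rational surface, $K_{\bar X}^2+\rho(\bar X)=10$ and $\rho(\bar X)=1+\#D$, so $\#D=9-K_{\bar X}^2=11$. Claims (2), (4), (7) are then pure discrepancy estimates. A component $D_k\neq D_3$ with $-D_k^2\ge 4$ has $\alpha_k\ge\tfrac12$ and $\Gamma\cdot D_k\ge -2-D_k^2\ge 2$, so its term in (II) is $\ge 1$, contradicting (II); this is (2). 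If $D_2$ met two components, each $(-2)$-neighbour $D_a$ satisfies $\alpha_a\ge\alpha_2/2$ by the recursion, whence $\alpha_2\ge(1+\alpha_2)/3$, i.e. $\alpha_2\ge\tfrac12$, against $\alpha_2<\tfrac12$; the same computation gives $\alpha_3\ge\tfrac23$ if $D_3$ met two components, against $\alpha_3<\tfrac23$, proving (4). If $D_1$ met a $(-3)$-curve $D_k$, combining $\alpha_1\ge\alpha_k/2$ with $\alpha_k\ge(1+\alpha_1)/3$ gives $\alpha_1\ge\tfrac15>\tfrac16$, against (I); this is (7).

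Claim (6) and the upper bound in (5) rest on the fact that along a chain of $(-2)$-curves the coefficients $\alpha$ vary linearly, so a connected component carrying two curves of self-intersection $\le -3$ forces both to have $\alpha\ge\tfrac12$. If both are $(-3)$-curves then each meets $\Gamma$ and their terms in (II) sum to $\ge 1$ (or, if one of them is $D_2$, (I) is already violated since $\alpha_2\ge\tfrac12$); if one is the $(-4)$-curve $D_3$, then the $(-2)$-curve adjacent to $D_3$ and the extra $(-3)$-curve together contribute more than $1$ to (II). Hence every $D^{(i)}$ carries at most one curve of self-intersection in $\{-3,-4\}$, which is (6); consequently the only possible extra $(-3)$-curves lie in $D^{(1)}$ and $D^{(4)}$, at most one each, so $D-D_2-D_3$ contains at most two $(-3)$-curves.

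The structural statements (3), (8) and the lower bound in (5) are the hard part, and I would handle them by contraction together with the Hodge index theorem, exactly as in the preceding $(2,3,5)$ analysis. For (3), suppose $D_1,D_2,D_3$ are all isolated; by (6) the component $D^{(4)}$ has eight components and at most one $(-3)$-curve, and $\Gamma$ meets $D$ only in that curve. If $\Gamma$ meets nothing, then $D_1,\dots,D_{11},\Gamma$ span a rank-$12$ negative definite sublattice of $\Pic(\bar X)_{\qq}$, impossible by Theorem \ref{Hodge}; if $\Gamma$ meets the $(-3)$-curve once, contracting $\Gamma$ turns $D^{(4)}$ into an all-$(-2)$ Du Val chain, so the image of $D$ is a negative definite configuration of rank $11=\rho(Y)$, again contradicting Theorem \ref{Hodge}. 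The lower bound in (5) is obtained by excluding, through the same contraction-plus-Hodge argument, the case where $D-D_2-D_3$ consists only of $(-2)$-curves. For (8), if $D_1$ met two (necessarily $(-2)$-) components $D_a,D_b$, then $M:=2C+2D_1+D_a+D_b$ is a $0$-curve with $M\cdot(-K_{\bar X})=2$ and defines a conic bundle in which $C$ appears as a multiplicity-two curve; arguing as in Lemma \ref{Pr2} produces a minimal curve meeting four components, or three isolated ones, which is excluded, while the chain structure of the fibres (Lemma \ref{Pr1}, Lemma \ref{Zhan2}) forces $P_1$ to be cyclic, hence not of type $D_n$ or $E_n$. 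The main obstacle is the bookkeeping in these last arguments: one must verify that the relevant discrepancy sums strictly exceed their thresholds and that the auxiliary conic bundles are arranged so that Lemmas \ref{Pr1}, \ref{Pr2} and \ref{Zhan2} genuinely apply; by contrast the Hodge-index contractions used for (3) and (5) are clean.
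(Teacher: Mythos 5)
Your master inequalities (I), (II) and the recursion are set up correctly, and they do carry parts (1), (2), (4), (6), (7) and the upper bound in (5); your proof of (7) (combining $\alpha_1\ge\alpha_k/2$ with $\alpha_k\ge(1+\alpha_1)/3$ to get $\alpha_1\ge\tfrac15>\tfrac16$) is in fact cleaner than the paper's, and your proof of (3) is exactly the paper's (two $(-3)$-curves in $D^{(4)}$ excluded numerically, then contract $\Gamma$ and apply Theorem \ref{Hodge}). The genuine gap is the lower bound in (5), where you propose ``the same contraction-plus-Hodge argument.'' That argument fails. Suppose $D-D_2-D_3$ consists only of $(-2)$-curves. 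Then $D^\sharp=\tfrac13 D_2+\tfrac12 D_3$ plus zero on all $(-2)$-components (Du Val components have zero discrepancy), so (I) and (II) hold with room to spare and carry no information. By (3) some $D_i$ has a neighbour; take $D_1$ meeting $D_4$ inside an $A_n$-chain $D_1-D_4-D_5-D_6-\cdots$ of $(-2)$-curves (e.g.\ $D^{(1)}=A_4$, $D^{(4)}=A_5$, $D_2,D_3$ isolated, which satisfies (1)--(4) and (6)--(8)). Here $\Gamma$ meets $D$ only at $D_4$, and after contracting $\Gamma$ the curves $g(D_1),g(D_4),g(D_5),g(D_6)$ have self-intersections $-2,-1,-2,-2$; the class $F=g(D_1)+2g(D_4)+g(D_5)$ satisfies $F^2=0$ and $F\cdot g(D_6)=1$, so $\bigl(F+\tfrac12 g(D_6)\bigr)^2=\tfrac12>0$. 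The lattice spanned by $g(D)$ is indefinite, not negative semi-definite, so Theorem \ref{Hodge} yields no contradiction at all. The paper excludes this case by an entirely different mechanism: contract $C$ (not $\Gamma$), so that $\phi(D_1)$ becomes a $(-1)$-curve and $\phi(D_1)+\phi(D_2)+\phi(D_3)$ a triangle; then contract all $(-n)$-curves ($n\ge 2$) to obtain a rank-one log del Pezzo, observe that $\phi(D_1)$ is a minimal curve with $|\phi(D_1)+\tilde D+K_Y|\neq\emptyset$, and apply Lemma \ref{Zhan1}: the wheel $\phi(D_1)+\phi(D_2)+\phi(D_3)$ would have to be disjoint from the remaining components of $\tilde D$, contradicting (3).

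Part (8) inherits this problem. Your sketch (``a minimal curve meeting four components, or three isolated ones'') is not substantiated, and nothing in it uses the $(-3)$-curve whose existence is precisely the content of the lower bound of (5); as the all-$(-2)$ configuration above shows, the first assertion of (8) cannot follow from the numerical constraints alone. The paper's argument is: given the conic bundle $|2C+2D_1+D_a+D_b|$, Lemma \ref{Pr1} forces every singular fibre to consist of $(-1)$- and $(-2)$-curves, hence the $(-3)$-curve $D_6$ provided by (5) is horizontal and must meet $D_a$ or $D_b$ (it cannot meet $C$, and not $D_1$ by (7)); the chain $D_6-D_a-D_1-D_b$ then gives $\alpha_1\ge\tfrac29$, and $\tfrac29+\tfrac13+\tfrac12>1$ violates (I). The exclusion of types $D_n$, $E_n$ for $P_1$ again uses that $(-3)$-curve: if $C$ sat in a fibre of type (b), Lemma \ref{Pr1} would force the $(-3)$-curve of $D^{(4)}$ into a singular fibre consisting only of $(-1)$- and $(-2)$-curves, a contradiction. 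So the lower bound of (5) and part (8) require the paper's geometric arguments; the lattice-theoretic approach you rely on cannot reach them.
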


\begin{proof}
(1) We see that $K_{\bar{X}}^2=(\Gamma-2C-D_1-D_2-D_3)\cdot K_{\bar{X}}=-2$. On the other hand, $K_{\bar{X}}^2=10-\rho(\bar{X})=9-\#D$. Then $\#D=11$.

(2) Assume that $D-D_2-D_3$ contains an irreducible component $\bar{D}$ such that $\bar{D}^2=-n\leq -4$. Note that the coefficient of $\bar{D}$ in $D^{\sharp}$ is at least $\frac{1}{2}$ and $\bar{D}\cdot\Gamma\geq n-2\geq 2$. Then $\Gamma\cdot(K_{\bar{X}}+D^{\sharp})\geq 0$, a contradiction.

(3) Assume that $D_1$, $D_2$, $D_3$ are isolated components of $D$. Assume that there exist at least two components $D_4$ and $D_5$ in $D^{(4)}$ with $D_4^2=D_5^2=-3$. Then the coefficients of $D_4$ and $D_5$ are at least $\frac{1}{2}$ and $\Gamma\cdot D_4=\Gamma\cdot D_5=1$. Hence, $\Gamma\cdot(K_{\bar{X}}+D^{\sharp})\geq 0$, a contradiction. So, there exists at most one component of $D^{(4)}$ with self-intersection $-3$. Let $g\colon\bar{X}\rightarrow Y$ be the contraction of $\Gamma$. Note that irreducible components of $g(D)$ generate Picard group. On the other hand, $g(D)$ is negative definite, a contradiction with \ref{Hodge}.

(4) Assume that $D_2$ meets two component of $D$. Then the coefficient of $D_2$ in $D^{\sharp}$ is at least $\frac{1}{2}$. Hence, $C\cdot(K_{\bar{X}}+D^{\sharp})\geq 0$, a contradiction. Assume that $D_3$ meets two component of $D$. Then the coefficient of $D_3$ in $D^{\sharp}$ is at least $\frac{2}{3}$. Hence, $C\cdot(K_{\bar{X}}+D^{\sharp})\geq 0$, a contradiction.


(5) Assume that $D-D_2-D_3$ contains no $(-3)$-curves. Let $\phi\colon\bar{X}\rightarrow Y$ be the contraction of $C$, let $\psi\colon Y\rightarrow\bar{Y}$ be the contraction of all $(-n)$-curves ($n\geq 2$). Then $\bar{Y}$ is a del Pezzo surface with log terminal singularities. Moreover, $\bar{Y}$ contains only one singular point that is not a rational double point. Let $\tilde{D}$ be the exception divisor of the minimal resolution $\psi$. Note that $\tilde{D}$ contains a unique $(-3)$-curve $\phi(D_3)$ and does not contain $(-n)$-curves for $n\geq 4$. Then $\phi(D_1)$ is a minimal curve on $Y$, and thus $|\phi(D_1)+\tilde{D}+K_Y|\neq\emptyset$. Then $D_1$, $D_2$, and $D_3$ are isolated, a contradiction to (3).

(6) Assume that one of $D^{(i)}$ contains two $(-n)$-curves ($n=3,4$). Then $\Gamma$ meets $\Gamma\cdot D^{(i)}=2$. Moreover, $\Gamma$ meets one of $(-n)$-curves and another $(-n)$-curve or curve between them. In both cases $\Gamma\cdot D^{\sharp}\geq 1$, a contradiction.

(7) Assume that $D_1$ meets $(-3)$-curve $D_4$. Since $\Gamma\cdot D_4=2$ and $\Gamma\cdot(K_{\bar{X}}+D^{\sharp})> 0$, we see that $D_4$ is a unique $(-3)$-curve in $D-D_2-D_3$. Let $\alpha_1, \alpha_2,\alpha_3,\alpha_4$ be the coefficients of $D_1, D_2,D_3,D_4$ in $D^{\sharp}$ correspondingly. We see that $\alpha_1\geq\frac{\alpha_4}{2}$.  Note that $K_{\bar{X}}^2=10-\rho(\bar{X})=9-\#D=-2$. Then \[0<(K_{\bar{X}}+D^{\sharp})^2=K_{\bar{X}}\cdot (K_{\bar{X}}+D^{\sharp})=-2+\alpha_2+2\alpha_3+\alpha_4.\] Since $C\cdot(K_{\bar{X}}+D^{\sharp})< 0$, we see that \[0>-1+\alpha_1+\alpha_2+\alpha_3\geq-1+\frac{\alpha_4}{2}+\alpha_2+\alpha_3.\] Then $-2+2\alpha_+2\alpha_3+\alpha_4>0$, a contradiction.

(8) Assume that $D_1$ meets two components $D_4$ and $D_5$ of $D$. Then $D_4^2=D_5^2=-2$. Let $\phi\colon\bar{X}\rightarrow\pp^1$ be a $\pp^1$-fibration defined by $|2C+2D_1+D_4+D_5|$. By Lemma \ref{Pr1} every singular fiber of $\phi$ consists of $(-1)$- and $(-2)$-curves. Moreover, every $(-1)$-curve in fibers of $\phi$ is minimal. Since the exists a $(-3)$-curve $D_6$, we see that $D_6$ meets $D_4$ or $D_5$. Let $\alpha_1, \alpha_2,\alpha_3$ be the coefficients of $D_1, D_2,D_3$ in $D^{\sharp}$ correspondingly. We see that $\alpha_1\geq\frac{2}{9}$, $\alpha_2\geq\frac{1}{3}$, $\alpha_3\geq\frac{1}{2}$. Then \[C\cdot(K_{\bar{X}}+D^{\sharp})=-1+\alpha_1+\alpha_2+\alpha_3>0,\] a contradiction. Assume that $P_1$ is a rational double point of type $D_n$ or $E_n$. Then there exists a $\pp^1$-fibration $\phi\colon\bar{X}\rightarrow\pp^1$ such that $C$ is a component of singular fiber of type (b) in Lemma \ref{Zhan0}. So, every singular fiber of $\phi$ consists of $(-1)$- and $(-2)$-curves. On the other hand, there exists a $(-3)$-curve in $D^{(4)}$. So, there exists a singular fiber of $\phi$ that consists of $(-3)$-curve, a contradiction.
\end{proof}

\begin{lemma}
\label{Case234-2}
Every singular point of $X$ is cyclic.
\end{lemma}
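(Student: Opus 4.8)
The plan is to run through the four singular points $P_1,P_2,P_3,P_4$ one at a time. Under the standing hypothesis of this section every non-Du Val singular point is already cyclic, and among Du Val points only those of type $D_n$ or $E_n$ fail to be cyclic; so the whole statement reduces to checking that none of the $P_i$ is a Du Val point of type $D_n$ or $E_n$, i.e.\ that no connected component $D^{(i)}$ is a tree of $(-2)$-curves with a vertex of valence three.

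Two of the points are immediate. The component $D^{(2)}$ contains the $(-3)$-curve $D_2$ and the component $D^{(3)}$ contains the $(-4)$-curve $D_3$; since a Du Val configuration consists entirely of $(-2)$-curves, both $P_2$ and $P_3$ are non-Du Val, hence cyclic by hypothesis. For $P_1$ I would simply invoke Lemma~\ref{Case234-1}(8), which asserts that $P_1$ is not a Du Val point of type $D_n$ or $E_n$; combined with the standing hypothesis this shows $P_1$ is cyclic as well.

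The real content is $P_4$, so suppose for contradiction that $P_4$ is Du Val of type $D_n$ or $E_n$. By Lemma~\ref{Case234-1}(2),(6) the component $D^{(4)}$ then consists only of $(-2)$-curves and has a single branch vertex $B$ of valence three with three legs (a $D_n$- or $E_n$-graph), while the $(-3)$-curve $D_2$ and the $(-4)$-curve $D_3$ live in the disjoint components $D^{(2)},D^{(3)}$. Mimicking the construction of Section~3 applied to the branch component $B$ (contract all of $D^{(4)}$ except $B$ and take the second extremal ray), I would produce a conic bundle $\phi\colon\bar X\to\pp^1$ on which $B$ is a (multi)section and in which the two short legs of $D^{(4)}$ together with a completing $(-1)$-curve form a singular fiber of type~(b) in Lemma~\ref{Zhan0}; every component of $D$ other than $B$ then lies in a fiber of $\phi$.

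Once this conic bundle is available, Lemma~\ref{Pr1} forces \emph{every} singular fiber of $\phi$ to consist only of $(-1)$- and $(-2)$-curves. But $D_2$ is a $(-3)$-curve and $D_3$ a $(-4)$-curve, and neither is the section $B$, so each must lie in some fiber of $\phi$ — contradicting the fact that fibers contain no curve of self-intersection $\le -3$. This contradiction shows $P_4$ cannot be of type $D_n$ or $E_n$, completing the proof. I expect the only delicate point to be the middle step: verifying that the branch component $B$ really yields a conic bundle rather than a birational contraction (which is excluded exactly as in Section~3, since a birational second extremal ray would force a log del Pezzo surface of Picard number one with at least five log terminal singular points) and that the $(-1)$-curve completing the type-(b) fiber is minimal, so that Lemma~\ref{Pr1} genuinely applies.
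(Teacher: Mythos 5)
Your handling of $P_1,P_2,P_3$ is fine and matches the paper: $P_2,P_3$ are non-Du Val because of the $(-3)$-curve $D_2$ and the $(-4)$-curve $D_3$, hence cyclic by the standing hypothesis, and $P_1$ is disposed of by Lemma~\ref{Case234-1}(8). The gap is in the argument for $P_4$, and the point you flag as ``delicate'' is not delicate but false, so the proof collapses there. Lemma~\ref{Pr1} does not ask for \emph{some} singular fiber consisting of $(-2)$-curves and one $(-1)$-curve; it requires that $(-1)$-curve to be the \emph{minimal} curve, and in your configuration this can never happen. Since $P_4$ is assumed Du Val, every component of $D^{(4)}$ (in particular $B$) has coefficient zero in $D^{\sharp}$, and $B$ is the only horizontal component of $D$, so every fiber $F$ of your conic bundle satisfies $-F\cdot(K_{\bar{X}}+D^{\sharp})=2$. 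Because components of $D$ pair to zero with $K_{\bar{X}}+D^{\sharp}$, a fiber of the form $2E+(\text{$(-2)$-curves of } D)$ forces $-E\cdot(K_{\bar{X}}+D^{\sharp})=1$. But the minimal curve $C$ of this case meets $D_2$ and $D_3$, whose coefficients in $D^{\sharp}$ are at least $\tfrac13$ and $\tfrac12$, so $-C\cdot(K_{\bar{X}}+D^{\sharp})<\tfrac16$. Hence the completing $(-1)$-curve is never minimal, the hypothesis of Lemma~\ref{Pr1} is unfulfillable for this conic bundle, and its conclusion is in fact false here: $D_2$ and $D_3$ \emph{do} lie in fibers of this very conic bundle (they are not horizontal), which is perfectly consistent --- the unique $(-1)$-curve of the fiber through $D_3$ simply occurs with multiplicity at least $4$, as forced by $F\cdot K_{\bar{X}}=-2$. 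So no contradiction ever materializes. (A smaller inaccuracy: two short legs attached to the horizontal curve $B$ plus one $(-1)$-curve would give a fiber of type (a), not (b), and a priori such a fiber may also contain components of $D^{(1)},D^{(2)},D^{(3)}$.)

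The paper's own proof avoids conic bundles entirely and is purely numerical, via Theorem~\ref{BMY}. If $P_4$ is non-cyclic it is Du Val of type $D_n$ or $E_n$, so $m_4\geq 8$ and $D^{(4)}$ consists of $(-2)$-curves; by Lemma~\ref{Case234-1}(5),(6) the $(-3)$-curve of $D-D_2-D_3$ must then lie in $D^{(1)}$, and by Lemma~\ref{Case234-1}(7) it does not meet $D_1$, so the chain $D^{(1)}$ contains at least the string $[2,2,3]$ and $m_1\geq 7$. With $m_2\geq 3$ and $m_3\geq 4$ this gives $\sum (m_i-1)/m_i\geq \tfrac67+\tfrac23+\tfrac34+\tfrac78>3$, contradicting $(*)$. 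Any repair of your argument would need a mechanism other than Lemma~\ref{Pr1}; the numerical route is the one that actually closes this case.
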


\begin{proof}
Note that $P_1, P_2, P_3$ are cyclic. Assume that $P_4$ is not cyclic. Then $m_2\geq 3$, $m_3\geq 4$, $m_4\geq 8$. If $P_4$ is not a rational double point, then $m_4 \geq 24$, which is a contradiction by Theorem \ref{dpBMY}, and Lemma \ref{Case234-1} (3). Assume that $P_4$ is a rational double point. Then, by Lemma \ref{Case234-1} (5), at least one of the connected components of $D_1$, $D_2$ and $D_3$ contains a $(-3)$-curve. By Theorem \ref{dpBMY} we see that the connected component $D^{(3)}$ of $D_3$ contains the $(-3)$-curve. Again, by Theorem \ref{dpBMY}, we see that $m_1 = 2$, $m_2 = 3$, $m_4 = 8$, and $m_3 \leq 23$. Then $P_1$ is of type $A_1$, $P_2$ is of type $\frac{1}{3}(1,1)$, and $P_4$ is of type $D_4$. By Lemma \ref{Case234-1} (1), $D^{(3)}$ consists of $5$ irreducible components, a contradiction to Theorem \ref{dpBMY}.
\end{proof}

\begin{lemma}
\label{Case234-3}
There exists exactly one $(-3)$-curves in $D-D_2-D_3$.
\end{lemma}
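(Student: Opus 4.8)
The plan is to assume, for contradiction, that $D-D_2-D_3$ contains two $(-3)$-curves and to violate the bound of Theorem~\ref{BMY}. First I would locate the two curves. By Lemma~\ref{Case234-1}(2) every component of $D$ other than $D_2,D_3$ is a $(-2)$- or $(-3)$-curve, and by Lemma~\ref{Case234-1}(6) each connected component $D^{(i)}$ carries at most one curve of self-intersection $-3$ or $-4$. Since the only such curve in $D^{(2)}$ is $D_2$ and the only one in $D^{(3)}$ is $D_3$, no extra $(-3)$-curve can sit in $D^{(2)}$ or $D^{(3)}$; hence the two $(-3)$-curves of $D-D_2-D_3$ lie one in $D^{(1)}$, say $E_1$, and one in $D^{(4)}$, say $E_4$.

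Next I would reduce everything to an arithmetic statement about the four orders $m_1,m_2,m_3,m_4$ of the local fundamental groups. By Lemma~\ref{Case234-2} every $P_i$ is cyclic, so each $D^{(i)}$ is a chain of rational curves and $m_i$ is the determinant of its intersection matrix. Lemma~\ref{Case234-1}(2),(6) force each chain to contain exactly one non-$(-2)$-curve: $D^{(1)},D^{(2)},D^{(4)}$ each a single $(-3)$-curve (namely $E_1$, $D_2$, $E_4$), and $D^{(3)}$ the single $(-4)$-curve $D_3$. Writing $\ell_i=\#D^{(i)}$, a chain with one $(-3)$-curve and otherwise $(-2)$-curves has determinant $\ge 2\ell_i+1$, while one with a single $(-4)$-curve has determinant $\ge 3\ell_i+1$; thus $m_j\ge 2\ell_j+1$ for $j\in\{1,2,4\}$ and $m_3\ge 3\ell_3+1$. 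Moreover $\ell_1\ge 3$, since by Lemma~\ref{Case234-1}(7) $D_1$ is not adjacent to $E_1$, so a component separates them. Finally Lemma~\ref{Case234-1}(1) gives $\ell_1+\ell_2+\ell_3+\ell_4=\#D=11$.

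The decisive step is to show $\frac1{m_1}+\frac1{m_2}+\frac1{m_3}+\frac1{m_4}<1$, which contradicts Theorem~\ref{BMY} (for four points $\sum\frac{m_i-1}{m_i}\le 3$ is equivalent to $\sum\frac1{m_i}\ge 1$). Using the bounds above, $\sum\frac1{m_i}\le \frac1{2\ell_1+1}+\frac1{2\ell_2+1}+\frac1{3\ell_3+1}+\frac1{2\ell_4+1}$, and since each summand is convex and decreasing in its $\ell_i$, this expression is maximized over $\{\ell_i\ge 1,\ \ell_1\ge 3,\ \sum\ell_i=11\}$ at an extreme point. The maximum occurs when $D^{(2)},D^{(3)},D^{(4)}$ are as small as possible, i.e. the isolated curves giving $m_2=m_4=3$ and $m_3=4$, while $D^{(1)}$ absorbs the rest with $\ell_1=8$, $m_1=17$; there $\sum\frac1{m_i}\le \frac1{17}+\frac13+\frac14+\frac13=\frac{199}{204}<1$. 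This is the desired contradiction, so $D-D_2-D_3$ contains exactly one $(-3)$-curve.

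I expect the main obstacle to be this last optimization rather than the geometry. One cannot bound the four terms separately: their individual maxima already sum to $\frac17+\frac13+\frac14+\frac13=\frac{89}{84}>1$, so it is essential to exploit that $\sum\ell_i=11$ forces at least one chain to be long (driving its $\frac1{m_i}$ far below its maximum), and that the $(-4)$-curve $D_3$ yields the stronger bound $m_3\ge 3\ell_3+1$ rather than $2\ell_3+1$ — without this the sum would not drop below $1$. Verifying that the extreme point $\ell_1=8$ genuinely maximizes the bound is the one computation I would carry out in detail.
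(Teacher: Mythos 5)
Your proposal is correct and takes essentially the same route as the paper: both arguments place the two hypothetical $(-3)$-curves one in $D^{(1)}$ and one in $D^{(4)}$ via Lemma \ref{Case234-1}(6)--(7), deduce $\#D^{(1)}\geq 3$ and $m_1\geq 7$, bound each $m_i$ below by the determinant of the corresponding chain, and contradict Theorem \ref{BMY} in the form $\sum 1/m_i\geq 1$. The paper organizes the final arithmetic as a short case analysis --- the case $(\#D^{(1)},\#D^{(2)},\#D^{(3)},\#D^{(4)})=(8,1,1,1)$ with $m_1\geq 17$, and the cases $\#D^{(i)}\geq 2$ for $i=2,3,4$ --- which amounts to checking exactly the vertices of your polytope, so your convexity phrasing (including the vertex comparison you flagged as needing verification, which does check out) is just a repackaging of the same inequalities.
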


\begin{proof}
As above, $\# D^{(1)}\geq 3$ and $m_1\geq 7$. Assume that $(\# D^{(1)},\# D^{(2)},\# D^{(3)},\# D^{(4)})=(8,1,1,1)$. Then $m_1\geq 17$, $m_2=3$, $m_3=4$, $m_4=3$. We have a contradiction with \ref{BMY}. Assume that $\# D^{(2)}\geq 2$. Then $m_1\geq 7$, $m_2\geq 5$, $m_3\geq 4$, $m_4\geq 3$. Also, we have a contradiction with \ref{BMY}. The same for $\# D^{(2)}\geq 2$. Assume that $\# D^{(3)}\geq 2$. Then $m_1\geq 7$, $m_2\geq 3$, $m_3\geq 7$, $m_4\geq 3$. Also, we have a contradiction with \ref{BMY}.
\end{proof}

\begin{lemma}
\label{Case234-4}
There exist at least two components of $D-D_1-D_2-D_3$ that meet $D_1+D_2+D_3$.
\end{lemma}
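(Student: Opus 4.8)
The plan is to argue by contradiction and to reduce to a single offending configuration that I can then collapse by successive contractions. Suppose that at most one component of $D-D_1-D_2-D_3$ meets $D_1+D_2+D_3$. By part~(3) of Lemma~\ref{Case234-1} the curves $D_1,D_2,D_3$ cannot all be isolated, so exactly one component, say $D_4$, of $D-D_1-D_2-D_3$ meets $D_1+D_2+D_3$. Since $C$ meets $D_1,D_2,D_3$, Lemma~\ref{Zhan2} puts these three curves into three distinct connected components of $D$; hence $D_4$ is adjacent to exactly one of them, and the remaining two are isolated in $D$. Because the proof of Lemma~\ref{Case234-3} forces $\#D^{(2)}=\#D^{(3)}=1$, the curves $D_2,D_3$ are already isolated, so $D_4$ must in fact meet $D_1$; moreover $D_4$ is a $(-2)$-curve by Lemma~\ref{Case234-1}(7).

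First I would record the intersection numbers of $\Gamma=2C+D_1+D_2+D_3+K_{\bar{X}}$ with the components of $D$. Using $C\cdot D_i=1$ and $D_i\cdot D_j=0$ for $i\neq j$ in $\{1,2,3\}$ one gets $\Gamma\cdot D_1=\Gamma\cdot D_2=\Gamma\cdot D_3=0$, while for any other component $D_k$ (which $C$ does not meet, by Lemma~\ref{Pr2}) one has $\Gamma\cdot D_k=(D_1+D_2+D_3)\cdot D_k-D_k^2-2$. This value is $0$ for an interior $(-2)$-curve, it is $1$ for the neighbour $D_4$ of $D_1$, and it is $1$ for the unique $(-3)$-curve furnished by Lemma~\ref{Case234-3}. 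Consequently $\Gamma$ meets $D$ in the two distinct curves $D_4$ and that $(-3)$-curve, each with multiplicity one; here I use Lemma~\ref{Case234-1}(7) again to see that $D_4$ is a $(-2)$-curve and hence is distinct from the $(-3)$-curve.

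Next I would contract $\Gamma$ and cascade. Since $\Gamma\cdot D_4=1$ and $D_4^2=-2$, the image $g(D_4)$ is a $(-1)$-curve, and since $g(D_4)\cdot g(D_6)=D_4\cdot D_6+(\Gamma\cdot D_4)(\Gamma\cdot D_6)=1$ for the $(-3)$-curve $D_6$, contracting $\Gamma$ joins two curves of $D$ that were previously non-adjacent: if both lie in the chain $D^{(1)}$ this produces a cyclic configuration, and otherwise it links two of the components $D^{(1)},\dots,D^{(4)}$. Iterating the contraction of the resulting $(-1)$-curves together with the $(-2)$- and $(-3)$-curves of $D$, exactly as in the treatment of cases $(c)$ and $(d)$ of Section~4, I reach a del Pezzo surface $\bar{Y}$ with Du Val singularities and $\rho(\bar{Y})=1$, computing $K_{\bar{Y}}^2$ from $K_{\bar{X}}^2=-2$ and the number of blown-down curves. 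Because the single-connecting-component hypothesis prevents the chains from telescoping, $\bar{Y}$ ends up carrying five log terminal singular points, which is impossible by \cite{Bel1}; equivalently, the classes of the components of $D$ span $\Pic(Y)_{\qq}$ and yet form a negative definite lattice of full rank, contradicting the signature $(1,\rho-1)$ of Theorem~\ref{Hodge}.

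The hard part will be the bookkeeping of this contraction. At each stage one must check that the curve blown down is genuinely a $(-1)$-curve and that the blow-down does not prematurely merge two of the components $D^{(1)},\dots,D^{(4)}$, so that the final count of connected components — equivalently of singular points of $\bar{Y}$ — comes out to exactly five. Isolating the precise mechanism by which the two-point intersection of $\Gamma$ with $D$ creates the extra singular point, and verifying that the contraction terminates at $\rho(\bar{Y})=1$ with only Du Val points, is where the real work lies; the numerical inequalities $\Gamma\cdot(K_{\bar{X}}+D^{\sharp})<0$ and $C\cdot(K_{\bar{X}}+D^{\sharp})<0$ used throughout Lemma~\ref{Case234-1} should suffice to keep the configuration under control at every step.
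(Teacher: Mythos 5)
Your opening agrees with the paper's proof: you reduce to a single connecting component $D_4$, compute that $\Gamma$ meets $D$ exactly in $D_4$ and the unique $(-3)$-curve, and contract $\Gamma$ first. But two things then go wrong. The smaller one: you import ``$\#D^{(2)}=\#D^{(3)}=1$'' from the \emph{proof} of Lemma~\ref{Case234-3}. Those equalities are derived there only under the hypothesis that $D-D_2-D_3$ contains two $(-3)$-curves, which is exactly what that proof refutes; they are not established facts, so you cannot conclude that $D_4$ meets $D_1$ rather than $D_2$ or $D_3$. This matters because it steers you away from the configurations that actually defeat your endgame.

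The genuine gap is the endgame itself. Your contradiction is ``$\bar{Y}$ carries five singular points'' (or, ``equivalently,'' a Hodge-index violation), and neither holds when the non-isolated member of $\{D_1,D_2,D_3\}$ is attached to $D_4$ alone. Concretely, take $D^{(2)}=D_2+D_4$, $D^{(1)}=D_1$, $D^{(3)}=D_3$, and $D^{(4)}$ a chain of seven curves whose unique $(-3)$-curve sits at an end. This satisfies the negation of the lemma, every part of Lemmas~\ref{Case234-1}, \ref{Case234-2}, \ref{Case234-3}, and Theorem~\ref{BMY} (here $m_1=2$, $m_2=5$, $m_3=4$, $m_4=15$, and the sum is $179/60<3$). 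After contracting $\Gamma$ and then all $(-n)$-curves ($n\geq 2$), the surface $\bar{Y}$ has exactly \emph{four} singular points --- $A_1$, two cyclic points, and $A_7$, since the $(-3)$-curve has become a $(-2)$-curve --- not five; and the Hodge argument is unavailable because $g(D_4)$ is now a $(-1)$-curve, so $g(D)$ is not negative definite: its components span all of $\Pic(Y)_{\qq}$, whose signature is $(1,\rho(Y)-1)$. The paper kills precisely this residue by a different mechanism: contracting $\Gamma$ turns $2C+D_1+D_2+D_3+K_{\bar{X}}\sim\Gamma$ into $2C+D_1+D_2+D_3+K_Y\sim 0$ on $Y$, which is the sub-case of $(2,3,4)$ already analyzed earlier in this section, where the classification arguments (via \cite{AN}, \cite{Fur}) force the complementary seven curves to form $E_7$; since by Lemma~\ref{Case234-2} every connected component here is a chain, no $E_7$ can occur, a contradiction. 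Some such classification input (in particular, the exclusion of the $A_7$ answer) is indispensable; counting singular points or appealing to lattice signatures cannot replace it.
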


\begin{proof}
Assume that there exists only one component $D_4$ of $D$ that meets one of $D_i$, $i=1,2,3$. Then $\Gamma$ meets $D_4$ and $(-3)$-curve. Let $\bar{X}\rightarrow Y$ be the contraction of $\Gamma$, let $Y\rightarrow\bar{Y}$ be the contraction of all $(-n)$-curves ($n\geq 2$). Then $\bar{Y}$ is a del Pezzo surface with log terminal singularities and $\rho(\bar{Y})=1$. We obtain $2C+D_1+D_2+D_3+K_{Y}\sim 0$ and $D$ has no connected components isomorphic to $E_7$, a contradiction.
\end{proof}

\begin{lemma}
\label{Case234-5}
There exist no two components $D_4$ and $D_5$ that meet $D_2$, $D_3$ correspondingly.
\end{lemma}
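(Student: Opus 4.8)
The plan is to assume for contradiction that such $D_4$ and $D_5$ exist and to squeeze enough numerical data out of the four resolution rods to violate either the minimality of $C$ or Theorem \ref{BMY}. First I would fix the shape of the two rods met by $C$. Since $D_4$ meets $D_2$ it lies in the component $D^{(2)}$, and by Lemma \ref{Case234-1}(6) that component carries only one curve of self-intersection $\leq -3$, namely $D_2$; hence $D_4$ is a $(-2)$-curve. By Lemma \ref{Case234-1}(4) the curve $D_2$ meets only $D_4$, so it sits at the end of its rod and $D^{(2)}$ is the chain $[3,2,\dots,2]$ of some length $\ell_2\geq 2$. Symmetrically $D_5$ is a $(-2)$-curve and $D^{(3)}$ is the chain $[4,2,\dots,2]$ of length $\ell_3\geq 2$. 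Solving the defining equations $D^{\sharp}\cdot D_i=2+D_i^2$ along these rods yields $\alpha_{D_2}=\tfrac{\ell_2}{2\ell_2+1}\geq\tfrac25$ and $\alpha_{D_3}=\tfrac{2\ell_3}{3\ell_3+1}\geq\tfrac47$, so that $\alpha_{D_2}+\alpha_{D_3}\geq\tfrac{34}{35}$.

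Next I would split into two cases according to where the unique $(-3)$-curve $D_6$ of $D-D_2-D_3$ (Lemma \ref{Case234-3}) lives; by the previous step it cannot lie in $D^{(2)}$ or $D^{(3)}$, so $D_6\subset D^{(1)}$ or $D_6\subset D^{(4)}$. In the first case $D^{(1)}$ is a chain $[2^a,3,2^b]$ with $D_1$ a $(-2)$-end, where $a\geq 2$ by Lemma \ref{Case234-1}(7) (as $D_1$ is not adjacent to $D_6$) and $a\leq \ell_1-1\leq 5$ because $\#D=11$ (Lemma \ref{Case234-1}(1)). The chain recursion gives $\alpha_{D_6}=(a+1)\alpha_{D_1}$, and since any $(-3)$-curve has coefficient at least $\tfrac13$ this forces $\alpha_{D_1}\geq\tfrac{1}{3(a+1)}\geq\tfrac1{18}$. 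Then $\alpha_{D_1}+\alpha_{D_2}+\alpha_{D_3}\geq\tfrac1{18}+\tfrac{34}{35}>1$, whence $C\cdot(K_{\bar{X}}+D^{\sharp})>0$, contradicting the minimality of $C$.

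In the remaining case $D_6\subset D^{(4)}$, so $D^{(1)}$ is a Du Val chain $A_{\ell_1}$ and $\alpha_{D_1}=0$. Now $C\cdot(K_{\bar{X}}+D^{\sharp})<0$ reads $\alpha_{D_2}+\alpha_{D_3}<1$, which together with the inequalities above forces $\ell_2=\ell_3=2$, i.e. $m_2=5$ and $m_3=7$. Writing $D^{(4)}=[2^c,3,2^d]$ I would compute its order $m_4=cd+2c+2d+3$ from the tridiagonal determinant recursion, and note that $\ell_1=6-c-d$ (so $m_1=7-c-d$) with $0\leq c+d\leq 5$ by $\#D=11$. It then remains to check that $\tfrac{m_1-1}{m_1}+\tfrac{m_2-1}{m_2}+\tfrac{m_3-1}{m_3}+\tfrac{m_4-1}{m_4}>3$ for every admissible $(c,d)$; equivalently $\tfrac{1}{7-c-d}+\tfrac1{m_4}<\tfrac{23}{35}$, whose left side is maximized at $(c,d)=(0,5)$ where it equals $\tfrac{15}{26}<\tfrac{23}{35}$. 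This contradicts Theorem \ref{BMY} and finishes the proof.

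The main obstacle I anticipate is bookkeeping rather than conceptual: correctly solving the linear systems $D^{\sharp}\cdot D_i=2+D_i^2$ to read off the coefficients $\alpha_{D_i}$ on each rod and the orders $m_i$ via continued fractions, and then confirming that the finite check against Theorem \ref{BMY} in the second case leaves no surviving configuration. One should also justify carefully that $D_4$ and $D_5$ are forced to be $(-2)$-curves and that $D_2$, $D_3$, $D_6$ occupy the claimed end positions, since the entire numerical argument rests on the exact shapes of the four rods.
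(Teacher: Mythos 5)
Your proof is correct and follows essentially the same route as the paper's: the minimality of $C$ plus the rod coefficient computations force $D^{(2)}=D_2+D_4$ and $D^{(3)}=D_3+D_5$ (so $m_2=5$, $m_3=7$), and then Theorem~\ref{BMY} combined with $\#D=11$ from Lemma~\ref{Case234-1} eliminates every remaining configuration. If anything your write-up is tighter than the paper's: the explicit split on where the unique $(-3)$-curve of $D-D_2-D_3$ sits, and the exhaustive $(c,d)$ enumeration of $D^{(4)}=[2^c,3,2^d]$ with $m_4=cd+2c+2d+3$, covers symmetric cases (e.g.\ $m_1\geq 4$ with $m_4\leq 3$) that the paper's terse case list ($m_1=2$, $m_1=3$, $m_1,m_4\geq 4$) leaves implicit.
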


\begin{proof}
Assume that $D_4$ and $D_5$ be components of $D$ that meet $D_2$, $D_3$ correspondingly. Let $\alpha_2,\alpha_3$ be the coefficients of $D_2, D_3$ in $D^{\sharp}$. Assume that $D_4$ meets one more component other than $D_2$. Then $\alpha_2\geq\frac{3}{7}$ and $\alpha_3\geq\frac{4}{7}$. Assume that $D_5$ meets one more component other than $D_3$. Then $\alpha_2\geq\frac{2}{5}$ and $\alpha_3\geq\frac{3}{5}$. In both cases $C\cdot(K_{\bar{X}}+D^{\sharp})\geq 0$, a contradiction. So, $D^{(2)}=D_2+D_4$, $D^{(3)}=D_3+D_5$. Then $m_2=5$, $m_3=7$. Assume that $m_1=2$. Since $\#D=11$, we see that $m_4\geq 13$, a contradiction with \ref{BMY}. Assume that  $m_1=3$. Since $\#D=11$, we see that $m_4>6$, a contradiction with \ref{BMY}.  Assume that $m_1\geq 4$ and $m_4 \geq 4$. Also we have a contradiction with \ref{BMY}.
\end{proof}

Assume that $D_2$ is an isolated component of $D$, $D_1$ and $D_3$ are not isolated components of $D$. Let $D_4$ be a component that meets $D_1$, $D_5$ be a component that meets $D_3$.
Let $\phi\colon\bar{X}\rightarrow\pp^1$ be a $\pp^1$-fibration defined by $|3C+2D_1+D_4+D_2|$. Since $2C+D_1+D_2+D_3+K_{\bar{X}}\sim \Gamma$, we see that every $(-1)$-curve in fibers $\phi$ meets $D_3$. Indeed, let $E$ be a $(-1)$-curve in a fiber of $\phi$. Then $$0\leq E\cdot\Gamma=E\cdot(2C+D_1+D_2+D_3+K_{\bar{X}})=E\cdot D_3+E\cdot K_{\bar{X}}=E\cdot D_3-1.$$ So $E\cdot D_3\geq 1$. Let $F$ be the fiber of $\phi$ that contains $D_5$. Since $D_5$ meets $D_3$ and every $(-1)$-curve in fibers $\phi$ meets $D_3$, we see that $F$ consists of $(-1)$- and $(-2)$-curves. Indeed, if $F$ does not consist of $(-1)$- and $(-2)$-curves, then $\sum m_i\geq 3$ where $m_i$ is multiplicity of $(-1)$-curves in $F$. So, $F\cdot D_3\geq\sum m_i+1\geq 4$, a contradiction to the fact that $D_3$ is a $3$-section.  So, $F$ has one of type (a), (b), (c) in Lemma \ref{Zhan0}.
Note that $\phi$ has at most two horizontal component in $D$. Assume that $F$ is of type (b).
Then $\phi$ has only one horizontal component in $D$ and $P_1$ is a rational double point of type $A_2$. Put $E$ is the $(-1)$-curve in $F$. Then \[\frac{6}{7}=-E\cdot(K_{\bar{X}}+D^{\sharp})<-C\cdot(K_{\bar{X}}+D^{\sharp})=\frac{19}{21},\] a contradiction.
We claim that $D^{(3)}=D_3+D_5$. Indeed, if otherwise,  $D_5$ intersects a component $D_6$ other than $D_3$. Then $F$ is of type (c). So there exists a $(-1)$-curve $E$ such that $E\cdot D_5= E\cdot D_3=1$. Thus  $E\cdot(K_{\bar{X}}+D^{\sharp})\geq 0$, a contradiction.
Assume that $F$ is of type (c).  Then there exists a $(-1)$-curve $E$ such that $E$ meets $D_3$ and $D_5$ and does not meet any other components of $D$. Let $Y\rightarrow\bar{X}$ be the blow up of intersection point of $D_3$ and $E$. Let $Y\rightarrow\bar{Y}$ be the contraction of all $(-n)$-curves ($n\geq 2$). Then $\bar{Y}$ has four singular points and $\rho(\bar{Y})=1$, a contradiction with \ref{BMY}. So, $F$ is of type (a). We obtain $F=D_5+2E+D_6$, where $E$ is a $(-1)$-curve and $D_6$ is a $(-2)$-curve. Note that $D_6$ is not isolated component of $D$. Indeed, assume that $D_6$ is isolated component of $D$. Then $D_6=D^{(4)}$ and $D_3$ is a unique horizontal component of $D$. Then $D^{(1)}=D_1+D_4$. Hence, $\#D=6$, a contradiction. So, $D_6$ is a component of $D^{(1)}$. Let $F'$ be a singular fiber of $\phi$ that contains $D^{(4)}$. Since every $(-1)$-curves $E'$ in $F'$ meets $D_3$, $E'\cdot D_3=1$ and $D_3$ is a 3-section, we see that $F'$ is not of type (a), (b), (c) of Lemma \ref{Zhan0}. Then $D^{(4)}$ contains a $(-3)$-curve and $\# D^{(4)}=4$. So, $P_1$ is a rational double point of type $A_4$. We have $m_1=5, m_2=3, m_3=7, m_4\geq 9$, a contradiction with \ref{BMY}.

Assume that $D_3$ is isolated component of $D$, $D_1$ and $D_2$ are not isolated components of $D$. Let $D_4$ be a component that meets $D_1$, $D_5$ be a component that meets $D_2$. Let $\alpha_2, \alpha_3$ be the coefficients of $D_2$ and $D_3$ in $D^{\sharp}$, let $\beta$ be the coefficient of $(-3)$-curve $D'$ in $D^{\sharp}$. We have $\alpha_2<\frac{1}{2}$, $\alpha_3=\frac{1}{2}$. Since $(K_{\bar{X}}+D^{\sharp})^2>0$, we see that $\alpha_2+\beta>1$.  Indeed, $$-1=K_{\bar{X}}\cdot\Gamma=K_{\bar{X}}\cdot(2C+D_1+D_2+D_3+K_{\bar{X}})=-2+1+2+K_{\bar{X}}^2.$$ So, $K_{\bar{X}}^2=-2$. Then $$0<(K_{\bar{X}}+D^{\sharp})^2=K_{\bar{X}}\cdot(K_{\bar{X}}+D^{\sharp})=K_{\bar{X}}^2+\alpha_2+2\alpha_3+\beta=-1+\alpha_2+\beta.$$ Hence, $\beta>\frac{1}{2}$. Let $\phi\colon\bar{X}\rightarrow\pp^1$ be a $\pp^1$-fibration defined by $|C+D_1+D_4+\Gamma|$. Since every $D_2, D_3$ and $D'$ are sections of $\phi$, we see that every singular fiber consists of $(-1)$-curves and $(-2)$-curves. Moreover, since there exist at least four sections of $\phi$, we see that there exist at least three singular fibers $F_1, F_2, F_3$ of type (c). Since $D'$ meets at most two components of $D$, we see that $D'$ meets a $(-1)$-curve in a fiber $F_j$ for some $j$. We may assume that $j=1$, i.e. $D'$ intersects a $(-1)$-curve $E$ in $F_1$. Since $2C+D_1+D_2+D_3+K_{\bar{X}}\sim \Gamma$  and $C\cdot E=D_1\cdot E=\Gamma\cdot E=0$, we see that $E$ meets $D_2$ or $D_3$.
Assume that $E$ meets $D_3$. Then
\[E\cdot(K_{\bar{X}}+D^{\sharp})\geq-1+\alpha_3+\beta=\beta-\frac{1}{2}> 0,\] a contradiction. Assume that $E$ meets $D_2$. Then \[-E\cdot(K_{\bar{X}}+D^{\sharp})\leq 1-\alpha_2-\beta<\frac{1}{2}-\alpha_2=-C\cdot(K_{\bar{X}}+D^{\sharp}),\]
a contradiction.

\subsection{The case $(2,3,3)$} Assume that $2C+D_1+D_2+D_3+K_{\bar{X}}\sim 0$. Since $-K_{\bar{X}}\sim 2C+D_1+D_2+D_3$, we see that $\# D'=6$. Let $\bar{X}\rightarrow Y$ be the contraction of $C$. Let $Y\rightarrow\bar{Y}$ be the contraction of all $(-2)$-curves. Note that $\bar{Y}$ has two singular points $P_1$ and $P_2$, where $P_1$ is of type $A_2$ and $P_2$ is of type one of $A_6, D_6, E_6$. By the classification of del Pezzo surfaces with rational double points, we see  that $P_2$ is of type $E_6$, which will be treated in Lemma \ref{DE4}.

Assume that $2C+D_1+D_2+D_3+K_{\bar{X}}\sim \Gamma$.

\begin{lemma}[{\cite{GZ}, \cite{KT}}]
\label{Case233-1} The following assertions hold:
\begin{enumerate}
\item $\#D=10$;
\item $D-D_2-D_3$ contains only $(-2)$- and $(-3)$-curves;
\item at least one of $D_1$, $D_2$, $D_3$ is not isolated component of $D$;
\item $D-D_2-D_3$ contains one or two $(-3)$-curves;
\item every $D^{(i)}$ contains at most one $(-3)$-curve;
\end{enumerate}
\end{lemma}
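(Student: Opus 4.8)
The five assertions are the exact analogues, for the collection $(2,3,3)$, of the corresponding items of Lemma~\ref{Case234-1}, and I would prove them by the same methods, only re-running the numerics. Throughout I would use two standing facts. First, since $-(K_{\bar X}+D^\sharp)=\pi^*(-K_X)$ is nef and big and the $(-1)$-curve $\Gamma$ is not $\pi$-exceptional, we have $\Gamma\cdot(K_{\bar X}+D^\sharp)<0$, i.e. $\Gamma\cdot D^\sharp<1$. Second, from $(K_{\bar X}+D^\sharp)\cdot D_i=0$ one reads off that the coefficient $\alpha_i$ of any $(-n)$-curve $D_i$ in $D^\sharp$ satisfies $\alpha_i\ge\frac{n-2}{n}$ (hence $\ge\frac13$ for a $(-3)$-curve and $\ge\frac12$ for $n\ge4$), and that $\Gamma\cdot D_i\ge K_{\bar X}\cdot D_i=n-2$ for every $(-n)$-curve $D_i$ in $D-D_2-D_3$ with $n\ge3$ (such a curve is disjoint from $C$).

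For (1) the plan is to compute $K_{\bar X}^2$ two ways. Writing $K_{\bar X}\sim\Gamma-2C-D_1-D_2-D_3$ and intersecting with $K_{\bar X}$, using $K_{\bar X}\cdot C=K_{\bar X}\cdot\Gamma=-1$, $K_{\bar X}\cdot D_1=0$ and $K_{\bar X}\cdot D_2=K_{\bar X}\cdot D_3=1$, yields $K_{\bar X}^2=-1$. On the other hand $K_{\bar X}^2=10-\rho(\bar X)=9-\#D$, whence $\#D=10$. Items (2), (5) and the upper half of (4) are then pure coefficient estimates against $\Gamma\cdot D^\sharp<1$. For (2), a component $\bar D$ of $D-D_2-D_3$ with $\bar D^2\le-4$ has $\alpha\ge\frac12$ and $\Gamma\cdot\bar D\ge2$, forcing $\Gamma\cdot D^\sharp\ge1$. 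For the upper half of (4), each $(-3)$-curve in $D-D_2-D_3$ has coefficient $\ge\frac13$ and meets $\Gamma$ at least once, so three such curves give $\Gamma\cdot D^\sharp\ge1$; hence there are at most two. For (5), if some $D^{(i)}$ contained two $(-3)$-curves their coefficients would be $\ge\frac12$ (note each such component is a rod, since every non-Du Val point is cyclic here) and together they would contribute $\Gamma\cdot D^\sharp\ge1$. Each of these is the same contradiction.

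The two substantive steps are (3) and the lower half of (4). For (3) I would assume $D_1,D_2,D_3$ all isolated, so that $D^{(4)}$ consists of the remaining seven curves; a first coefficient estimate (two $(-3)$-curves in $D^{(4)}$ would have coefficients $\ge\frac12$ and each meet $\Gamma$ once) shows $D^{(4)}$ carries at most one $(-3)$-curve. A direct computation then gives $\Gamma\cdot D_1=\Gamma\cdot D_2=\Gamma\cdot D_3=0$, so $\Gamma$ meets only $D^{(4)}$; contracting $\Gamma$ by $g\colon\bar X\to Y$, the ten curves $g(D)$ still generate $\Pic(Y)_{\mathbb{Q}}$ (because $\pi_*\Gamma\ne0$ forces $\Gamma\notin\langle D_i\rangle$) while forming a negative-definite configuration, contradicting the signature $(1,\rho(Y)-1)$ of Theorem~\ref{Hodge}; this is precisely the argument of Lemma~\ref{Case234-1}(3). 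For the lower half of (4) I would assume $D-D_2-D_3$ has no $(-3)$-curve, so by (2) it consists of $(-2)$-curves only. Contracting $C$ by $\phi$, the curves $\phi(D_2),\phi(D_3)$ become $(-2)$-curves while $\phi(D_1)$ becomes a $(-1)$-curve, so $\tilde D:=\phi(D)-\phi(D_1)$ is a collection of $(-2)$-curves contractible by $\psi\colon Y\to\bar Y$ onto a Du Val del Pezzo surface of Picard number one. Pushing the relation down gives $\phi(D_1)+\phi(D_2)+\phi(D_3)+K_Y\sim\phi(\Gamma)$, which is effective, so $|\phi(D_1)+\tilde D+K_Y|\ne\emptyset$ and Lemma~\ref{Zhan1} produces a decomposition with $\phi(D_1)+\tilde D''+K_Y\sim0$; substituting yields $\phi(\Gamma)\sim\phi(D_2)+\phi(D_3)-\tilde D''$, a divisor supported on $\tilde D$. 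Applying $\psi_*$ annihilates the right-hand side while $\psi_*\phi(\Gamma)\ne0$ (the $(-1)$-curve $\phi(\Gamma)$ is not $\psi$-exceptional), the desired contradiction.

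The hard part is making the two contraction arguments airtight. For (3) the delicate point is justifying that $g(D)$ is genuinely negative definite after blowing down $\Gamma$ (equivalently, that the surface obtained by contracting $g(D)$ would have Picard number zero), since a blow-down can raise self-intersections; here one must lean on the disjointness $\Gamma\cdot D_i=0$ for $i=1,2,3$ and the bound of at most one $(-3)$-curve in $D^{(4)}$, exactly as in Lemma~\ref{Case234-1}(3). For the lower half of (4) the genuine subtlety, peculiar to the collection $(2,3,3)$, is that after contracting $C$ the two $(-3)$-curves $D_2,D_3$ both degenerate to $(-2)$-curves, so $\bar Y$ is Du Val rather than carrying a surviving non-Du Val point; unlike the $(2,3,4)$ case one cannot stop at the level of $Y$, and the contradiction must be extracted by transporting the linear equivalence all the way down to $\bar Y$ via $\psi_*$. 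I expect this final push-forward to be the step most easily overlooked.
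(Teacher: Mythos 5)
Your overall route is the paper's: the paper's entire proof of this lemma is the single sentence ``the proof is the same as in Lemma~\ref{Case234-1}'', and your items (1), (2), (3) and both halves of (4) are faithful re-runs of items (1), (2), (3), (5) of that lemma with the $(2,3,3)$ numerics ($K_{\bar X}^2=-1$, $\#D=10$). Your treatment of the lower half of (4) is in fact more careful than the template you are copying: you correctly notice that after contracting $C$ both $\phi(D_2)$ and $\phi(D_3)$ become $(-2)$-curves, so the unique-$(-3)$-curve structure on $Y$ that drives the paper's argument in the $(2,3,4)$ case disappears, and your repair --- substituting Lemma~\ref{Zhan1} into $\phi(D_1)+\phi(D_2)+\phi(D_3)+K_Y\sim\phi(\Gamma)$ to get $\phi(\Gamma)\sim\phi(D_2)+\phi(D_3)-\tilde D''$ and then applying $\psi_*$ to kill the right-hand side but not the left --- is sound (and the prerequisites of Lemma~\ref{Zhan1}, minimality of $\phi(D_1)$ and $\bar Y$ being a rank-one log del Pezzo, are asserted at the same level of detail as in the paper itself).

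There is, however, a genuine gap in your item (5). You claim that if some $D^{(i)}$ contains two $(-3)$-curves then ``their coefficients would be $\geq\frac{1}{2}$ and together they would contribute $\Gamma\cdot D^{\sharp}\geq 1$.'' This presupposes that \emph{both} $(-3)$-curves meet $\Gamma$, which your standing fact $\Gamma\cdot D_j\geq K_{\bar X}\cdot D_j$ guarantees only for curves of $D-D_2-D_3$. But the instances of (5) that carry its content are exactly $i=2,3$: there one of the two $(-3)$-curves is $D_2$ (or $D_3$) itself, and since $D_1,D_2,D_3$ lie in distinct connected components of $D$ (Lemma~\ref{Zhan2}), a direct computation gives $\Gamma\cdot D_2=(2C+D_1+D_2+D_3+K_{\bar X})\cdot D_2=2-3+1=0$, and likewise $\Gamma\cdot D_3=0$. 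So the two $(-3)$-curves alone contribute only $\geq\frac{1}{2}$ to $\Gamma\cdot D^{\sharp}$, and the sub-case is not otherwise excluded: $\alpha_1=0$ (i.e.\ $P_1$ of type $A_1$), $\alpha_2\geq\frac{1}{2}$, $\alpha_3\geq\frac{1}{3}$ is compatible with $C\cdot(K_{\bar X}+D^{\sharp})<0$. The missing idea is the one in the paper's proof of Lemma~\ref{Case234-1}(6): compute $\Gamma\cdot D^{(i)}\geq 2$ and note that these intersections must fall, with total multiplicity at least two, on curves lying on the segment of the rod between the two $(-3)$-curves --- either on the second $(-3)$-curve met twice (when the two are adjacent), or on it and on the $(-2)$-curve adjacent to $D_2$ on that segment --- and every curve on that segment has coefficient $\geq\frac{1}{2}$, which restores $\Gamma\cdot D^{\sharp}\geq 1$. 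The statement of (5) is true, but your estimate fails precisely in the sub-case it is really needed for.
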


\begin{proof}
The idea is the same as in Lemma \ref{Case234-1}.
\end{proof}

\begin{lemma}
\label{Case233-2}
The components $D_2$ and $D_3$ meet at most one component of $D$.
\end{lemma}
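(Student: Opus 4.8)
The plan is to argue by contradiction and, exploiting the symmetry between the two $(-3)$-curves $D_2$ and $D_3$, to treat only the case where $D_2$ meets two components of $D$. First I would pin down the shape of these neighbours: since $D_2$ lies in the connected component $D^{(2)}$ and, by Lemma \ref{Case233-1}(5), $D^{(2)}$ contains no $(-3)$-curve other than $D_2$ itself, the two curves $D_a,D_b$ meeting $D_2$ are both $(-2)$-curves. Solving the equations $(K_{\bar{X}}+D^\sharp)\cdot D_i=0$ on the subchain $D_a-D_2-D_b$ then yields $\alpha_2\ge\tfrac12$ for the coefficient $\alpha_2$ of $D_2$ in $D^\sharp$, with the minimum $\tfrac12$ attained by $[2,3,2]$.

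The estimate used for the case $(2,3,4)$ in Lemma \ref{Case234-1}(4) does not close here: there the $(-4)$-curve forced $\alpha_3\ge\tfrac12$, so $\alpha_2+\alpha_3\ge1$ and $C\cdot(K_{\bar{X}}+D^\sharp)\ge0$ immediately, whereas now both $D_2,D_3$ are $(-3)$-curves with floor only $\tfrac13$, and $\alpha_1+\alpha_2+\alpha_3\ge0+\tfrac12+\tfrac13=\tfrac56<1$ is consistent with $C$ being minimal. So I would bring in the self-intersection relation. Writing $a=-C\cdot(K_{\bar{X}}+D^\sharp)>0$ and using $\#D=10$ from Lemma \ref{Case233-1}(1), one has $a=1-\alpha_1-\alpha_2-\alpha_3$ and $K_X^2=(K_{\bar{X}}+D^\sharp)^2=(K_{\bar{X}}+D^\sharp)\cdot K_{\bar{X}}=-1+\sum_{(-3)\text{-curves}}\alpha_i$. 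Since $K_X^2>0$ forces $\sum_{(-3)}\alpha_i>1$ while $\alpha_2+\alpha_3<1$ by minimality of $C$, there must be at least one further $(-3)$-curve, of coefficient $\ge\tfrac13$; feeding $\alpha_2\ge\tfrac12$, $\alpha_3\ge\tfrac13$ and this extra $\tfrac13$ into $K_X^2-a=2\alpha_2+2\alpha_3+\alpha_1+\alpha_{\mathrm{extra}}-2$ gives the key inequality $K_X^2\ge a$.

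The contradiction then comes from the auxiliary curve $\Gamma$, which in this sub-case is the $(-1)$-curve with $\Gamma\sim 2C+D_1+D_2+D_3+K_{\bar{X}}$. A direct computation gives $\Gamma\cdot C=\Gamma\cdot D_1=\Gamma\cdot D_2=\Gamma\cdot D_3=0$ but $\Gamma\cdot D_a=\Gamma\cdot D_b=1$, so $\Gamma$ meets the two $(-2)$-curves $D_a,D_b$, which lie in the \emph{same} connected component $D^{(2)}$. Intersecting with $K_{\bar{X}}+D^\sharp$ yields $-\Gamma\cdot(K_{\bar{X}}+D^\sharp)=2a-K_X^2$; as $\Gamma$ is not $\pi$-exceptional, $-\Gamma\cdot(K_{\bar{X}}+D^\sharp)=\pi^*(-K_X)\cdot\Gamma>0$, so $K_X^2<2a$, and with $K_X^2\ge a$ this forces $2a-K_X^2\in(0,a]$. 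If $K_X^2>a$ then $0<-\Gamma\cdot(K_{\bar{X}}+D^\sharp)<a$, contradicting that $C$ realizes the smallest positive value of $-(\,\cdot\,)\cdot(K_{\bar{X}}+D^\sharp)$; if $K_X^2=a$ then $\Gamma$ is itself a minimal curve meeting $D_a$ and $D_b$ in one connected component, contradicting Lemma \ref{Zhan2}.

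The step I expect to be the main obstacle is the inequality $K_X^2\ge a$ (equivalently $\sum_{(-3)}\alpha_i+\alpha_1+\alpha_2+\alpha_3\ge2$): it is precisely the input that was automatic in the $(2,3,4)$ case but must here be extracted from the positivity $K_X^2>0$ and the forced existence of an extra $(-3)$-curve, and the borderline value $K_X^2=a$ has to be disposed of separately via Lemma \ref{Zhan2}. Everything else reduces to routine bookkeeping of discrepancy coefficients on chains of $(-2)$- and $(-3)$-curves.
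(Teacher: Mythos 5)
Your proof is correct, and it takes a genuinely different route from the paper's. The paper's own argument is geometric: assuming $D_2$ meets two components $D_4,D_5$ of $D$ (your $D_a,D_b$, automatically $(-2)$-curves), it notes $\Gamma\cdot D_4=\Gamma\cdot D_5=1$ and uses $|2\Gamma+D_4+D_5|$ to define a conic bundle; the fiber through $C,D_1,D_3$ is identified as $2C+D_1+D_3+E$ with $E$ a $(-1)$-curve meeting $D_3$, and since the $(-3)$-curve $D_6$ provided by Lemma \ref{Case233-1} meets $\Gamma$ but not $D_4+D_5$, it deduces $E\cdot D_6=2$, whence $E\cdot(K_{\bar{X}}+D^{\sharp})\geq 0$, contradicting ampleness of $-K_X$. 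You instead argue numerically: from $\#D=10$ and the absence of $(-n)$-curves with $n\geq 4$ you compute $K_X^2=-1+\sum_{(-3)}\alpha_i$ and $a=1-\alpha_1-\alpha_2-\alpha_3$; the hypothesis gives $\alpha_2\geq\frac{1}{2}$ and, via $K_X^2>0$, an extra $(-3)$-curve, hence $K_X^2\geq a$; since $-\Gamma\cdot(K_{\bar{X}}+D^{\sharp})=2a-K_X^2$ is positive by ampleness and at least $a$ by minimality of $C$, you get $K_X^2=a$, so $\Gamma$ is itself a minimal curve, and then the standing hypothesis of this section ($|E+D+K_{\bar{X}}|=\emptyset$ for every minimal curve $E$) combined with Lemma \ref{Zhan2} is violated, because $\Gamma$ meets the two components $D_4,D_5$ lying in the single chain $D^{(2)}$. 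Your route buys independence from the fiber-structure claim $F=2C+D_1+D_3+E$, which the paper asserts with essentially no justification, at the cost of the discrepancy estimates and a separate disposal of the borderline case $K_X^2=a$; the paper's route stays inside the conic-bundle machinery (Lemmas \ref{Zhan0} and \ref{Pr1}) that powers the neighbouring lemmas and reaches the contradiction in one stroke.
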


\begin{proof}
Assume that $D_2$ meets two components $D_4$ and $D_5$. Note that $D_4^2=D_5^2=-2$ since otherwise $\Gamma \cdot (K_{\bar{X}}+D^\#) > 0$, a contradiction. We see that $\Gamma\cdot D_4=\Gamma\cdot D_5=1$. Let $\phi\colon\bar{X}\rightarrow\pp^1$ be a $\pp^1$-fibration defined by $|2\Gamma+D_4+D_5|$. Let $F$ be a fiber that contains $C, D_1, D_3$. Then $F=2C+D_1+D_3+E$ where $E$ is a $(-1)$-curve that meets $D_3$. By Lemma \ref{Case233-1} we see that $\Gamma$ meets a $(-3)$-curve $D_6$. Then if
$D_6$ does not intersect $D_3$, $E\cdot D_6=2$; and if $D_6$ intersects $D_3$,  $E\cdot D_6 = E \cdot D_3 = 1$. In either case, $E\cdot(K_{\bar{X}}+D^{\sharp})\geq 0$, a contradiction.
\end{proof}

\begin{corollary}
\label{Case233-3}
Let $g\colon\bar{X}\rightarrow Y$ be the contraction of $C$. Then $g(D^{(2)}+D^{(3)})$ is an exceptional divisor over a singular point of type $A_m$ for some $m$.
\end{corollary}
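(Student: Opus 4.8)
The plan is to compute directly how the configuration $D^{(2)}+D^{(3)}$ transforms under the contraction $g$ of the $(-1)$-curve $C$, and then to read off the resulting dual graph. Recall that $C$ meets $D_2$ and $D_3$, which by Lemma \ref{Zhan2} lie in distinct connected components $D^{(2)}$ and $D^{(3)}$ and satisfy $C\cdot D_2=C\cdot D_3=1$; moreover $C$ meets no other component of $D^{(2)}$ or $D^{(3)}$, since $C$ passes through exactly the three components $D_1,D_2,D_3$. Using the blow-down formula for a $(-1)$-curve, I would compute $g(D_2)^2=D_2^2+(C\cdot D_2)^2=-3+1=-2$ and likewise $g(D_3)^2=-2$, while every other component of $D^{(2)}+D^{(3)}$ is disjoint from $C$ and retains self-intersection $-2$. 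Finally $g(D_2)\cdot g(D_3)=D_2\cdot D_3+(C\cdot D_2)(C\cdot D_3)=0+1=1$, so the images of the two components become joined at the point $g(C)$.

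The key structural input is that $D_2$ and $D_3$ are \emph{end} components of their chains. Since $P_2$ and $P_3$ are cyclic, $D^{(2)}$ and $D^{(3)}$ are rods, and by Lemma \ref{Case233-1} the curve $D_2$ (resp.\ $D_3$) is the unique $(-3)$-curve in its rod, all other components being $(-2)$-curves. Because $D_2$ lies in $D^{(2)}$ and meets neither $D_1$, $D_3$, nor the non-exceptional curve $C$ among components of $D$, the only components of $D$ it can meet are its neighbours inside $D^{(2)}$; hence Lemma \ref{Case233-2}, which says $D_2$ and $D_3$ each meet at most one component of $D$, forces $D_2$ to have at most one neighbour, i.e.\ to sit at an end of the rod $D^{(2)}$ (or to be isolated). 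The same applies to $D_3$.

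Putting these together, $g(D^{(2)})$ and $g(D^{(3)})$ are each rods consisting entirely of $(-2)$-curves, and since $D_2,D_3$ are their end components, the new intersection $g(D_2)\cdot g(D_3)=1$ glues them end-to-end into a single rod of $(-2)$-curves $g(D^{(2)}+D^{(3)})$ of length $\#D^{(2)}+\#D^{(3)}$. Such a rod has the $A_m$ intersection pattern, which is negative definite and connected, so it contracts to a du Val singular point of type $A_m$; as $A_m$ is a cyclic quotient singularity, this proves the claim. The point requiring the most care is precisely the absence of a branch vertex at $g(C)$: a priori the three curves $g(D_1),g(D_2),g(D_3)$ all pass through that point, but $g(D_1)$ is not a component of $D^{(2)}+D^{(3)}$, and Lemma \ref{Case233-2} is exactly what prevents $g(D_2)$ or $g(D_3)$ from acquiring a third neighbour inside the configuration, so the dual graph genuinely remains a chain rather than degenerating into a $D$-type or star-shaped graph.
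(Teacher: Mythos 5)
Your proof is correct and is essentially the paper's own argument: the paper states this corollary without any written proof, as an immediate consequence of Lemma \ref{Case233-2} together with Lemma \ref{Case233-1} and the section's standing assumption that non-Du Val points are cyclic. Your write-up --- rods by cyclicity, all components of $D^{(2)}+D^{(3)}$ other than $D_2,D_3$ being $(-2)$-curves by Lemma \ref{Case233-1}(2),(5), $D_2,D_3$ forced to be end components by Lemma \ref{Case233-2}, and the blow-down computation $g(D_2)^2=g(D_3)^2=-2$, $g(D_2)\cdot g(D_3)=1$ gluing the two rods into a single chain of $(-2)$-curves of type $A_m$ --- is exactly the intended derivation.
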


\begin{lemma}
\label{Case233-4}
The divisor $D-D_2-D_3$ contains only one $(-3)$-curve.
\end{lemma}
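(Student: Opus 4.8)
The plan is to argue by contradiction: suppose $D-D_2-D_3$ contains two $(-3)$-curves. By Lemma \ref{Case233-1}(5) each connected component of $D$ carries at most one $(-3)$-curve, and the unique $(-3)$-curves of $D^{(2)}$ and $D^{(3)}$ are $D_2$ and $D_3$ themselves; hence the two extra $(-3)$-curves must be distributed one in $D^{(1)}$ and one in $D^{(4)}$. Call them $D_4\subset D^{(1)}$ and $D_5\subset D^{(4)}$, with coefficients $\alpha_4,\alpha_5$ in $D^{\sharp}$; note $\alpha_4,\alpha_5\geq\frac13$, and likewise $\alpha_2,\alpha_3\geq\frac13$ since $D_2,D_3$ are $(-3)$-curves. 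The whole argument runs through the $(-1)$-curve $\Gamma$ with $2C+D_1+D_2+D_3+K_{\bar X}\sim\Gamma$ (Lemma \ref{Zhan3}), together with the two numerical facts at our disposal: the minimality of $C$, and the ampleness of $-K_X$, which gives $\Gamma\cdot(K_{\bar X}+D^{\sharp})=(\pi_*\Gamma)\cdot K_X<0$, i.e. $\Gamma\cdot D^{\sharp}<1$.

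First I would record the intersection numbers of $\Gamma$. Using $K_{\bar X}\cdot D_i=-D_i^2-2$ and $C\cdot D_i=1$ exactly for $i=1,2,3$, one checks $\Gamma\cdot D_1=\Gamma\cdot D_2=\Gamma\cdot D_3=0$, $\Gamma\cdot D_5=1$, $\Gamma\cdot D_4=1+D_4\cdot D_1$, and that $\Gamma$ meets no component of $D$ other than $D_4$, $D_5$ and the $(-2)$-curves adjacent to $D_1,D_2,D_3$, each transversally. Here Lemma \ref{Case233-1}(2) is used: away from $D_2,D_3$ all components are $(-2)$- or $(-3)$-curves, the latter being only $D_4,D_5$, so there are no further curves for $\Gamma$ to hit. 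If $D_4$ meets $D_1$, then $\Gamma\cdot D_4=2$ and already $\Gamma\cdot D^{\sharp}\geq 2\alpha_4+\alpha_5\geq\frac23+\frac13=1$, contradicting $\Gamma\cdot D^{\sharp}<1$. So $D_4$ does not meet $D_1$, and $\Gamma\cdot D_4=1$.

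With $D_4$ not adjacent to $D_1$, all neighbours of $D_1$ are $(-2)$-curves, and the adjunction relations $(K_{\bar X}+D^{\sharp})\cdot D_i=0$ give that the coefficients of the neighbours of $D_1$ sum to $2\alpha_1$, those of the neighbours of $D_2$ to $3\alpha_2-1$, and those of the neighbours of $D_3$ to $3\alpha_3-1$. Summing all contributions yields
$$\Gamma\cdot D^{\sharp}=\alpha_4+\alpha_5+2\alpha_1+3\alpha_2+3\alpha_3-2.$$
Now I invoke minimality of $C$: since $-\Gamma\cdot(K_{\bar X}+D^{\sharp})=1-\Gamma\cdot D^{\sharp}>0$ while $-C\cdot(K_{\bar X}+D^{\sharp})=1-\alpha_1-\alpha_2-\alpha_3$ is the smallest positive value, we get $\Gamma\cdot D^{\sharp}\leq\alpha_1+\alpha_2+\alpha_3$. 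Combining the two displays gives
$$\alpha_4+\alpha_5+\alpha_1+2\alpha_2+2\alpha_3\leq 2,$$
whereas $\alpha_4,\alpha_5,\alpha_2,\alpha_3\geq\frac13$ and $\alpha_1>0$ force the left-hand side to exceed $2$ — the desired contradiction.

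The only genuinely delicate point is the bookkeeping of $\Gamma\cdot D^{\sharp}$: one must be certain that $\Gamma$ meets no component of $D$ besides the ones listed, which is precisely where Lemma \ref{Case233-1}(2),(5) are needed (no further $(-3)$- or lower curves exist for $\Gamma$ to hit, and the neighbours of $D_1$ are all $(-2)$-curves). I would stress that, unlike the analogous step in the $(2,3,4)$ case (Lemma \ref{Case234-3}), here the inequality of Theorem \ref{BMY} is by itself \emph{not} strong enough to exclude two $(-3)$-curves — the extremal distribution $(\#D^{(1)},\#D^{(2)},\#D^{(3)},\#D^{(4)})=(7,1,1,1)$ satisfies it — so the contradiction has to be extracted from the minimality of $C$ instead.
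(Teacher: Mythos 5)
Your proof is correct, but it takes a genuinely different route from the paper's. The paper argues geometrically: it contracts $C$, then contracts all $(-2)$- and $(-3)$-curves to reach a new log del Pezzo surface of Picard number one, observes that the image of $\Gamma$ is a minimal curve there (it meets both $(-3)$-curves), and derives a contradiction from the structure lemmas: since one of the two $(-3)$-curves lies in $D^{(i)}$ for some $i\in\{1,2,3\}$, the image of $\Gamma$ meets that connected component twice, forcing $|\Gamma+\tilde D+K_Y|\neq\emptyset$ by Lemma \ref{Zhan2}, while $\Gamma$ also passes through two distinct connected components of $\tilde D$, which is incompatible with the wheel decomposition of Lemma \ref{Zhan1}. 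Your argument instead stays entirely on $\bar X$ and is purely numerical: you compute $\Gamma\cdot D^{\sharp}$ from the linear equivalence $\Gamma\sim 2C+D_1+D_2+D_3+K_{\bar X}$ together with the defining relations $(K_{\bar X}+D^{\sharp})\cdot D_i=0$, compare against the minimality of $C$, and contradict the bounds $\alpha\geq\frac{1}{3}$ for coefficients of $(-3)$-curves. What your approach buys is self-containedness: it avoids justifying that the contracted surface is again a rank-one log del Pezzo and that the image of $\Gamma$ is minimal on it — verifications the paper leaves implicit. What you lose is brevity, and in fact you can shorten your own argument: the identity $\Gamma\cdot D^{\sharp}=2\alpha_1+3\alpha_2+3\alpha_3+\alpha_4+\alpha_5-2$ follows in one line from $\Gamma\cdot D^{\sharp}=2C\cdot D^{\sharp}+(D_1+D_2+D_3)\cdot D^{\sharp}+K_{\bar X}\cdot D^{\sharp}$, using $D_i\cdot D^{\sharp}=-K_{\bar X}\cdot D_i$ for every component and $K_{\bar X}\cdot D^{\sharp}=\alpha_2+\alpha_3+\alpha_4+\alpha_5$; this holds whether or not $D_4$ meets $D_1$, so your preliminary case analysis and neighbour bookkeeping are unnecessary, and the final inequality $\alpha_1+2\alpha_2+2\alpha_3+\alpha_4+\alpha_5\leq 2$ is already contradictory on its own.
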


\begin{proof}
Assume that $D-D_2-D_3$ contains two $(-3)$-curves. Let $g\colon\bar{X}\rightarrow Y$ be the contraction of $C$, let $Y\rightarrow\bar{Y}$ be the contraction of all $(-n)$-curves ($n=2,3$). We obtain a del Pezzo surface with Picard number one and with log terminal singularities. Since $\Gamma$ meets every $(-3)$-curves, we see that $\Gamma$ is a minimal curve. Put $\tilde{D}=\sum\tilde{D}_i$, where $\tilde{D}_i$ are all $(-2)$- and $(-3)$-curves. Then one of $(-3)$-curves is contained in $D^{(i)}$ $i=1,2,3$. Hence $\Gamma\cdot  D^{(i)}=2$. So, $|\Gamma+\tilde{D}+K_Y|\neq\emptyset$. On the other hand, $\Gamma$ passes through two different connected components of $\tilde{D}$, a contradiction. Hence the conclusion follows from Lemma  \ref{Case233-1} (4).
\end{proof}


\begin{lemma}
\label{Case233-5}
$D_1$ does not meet a $(-3)$-curve.
\end{lemma}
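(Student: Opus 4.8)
The plan is to assume that $D_1$ meets a $(-3)$-curve and derive a contradiction with the Hodge index theorem (Theorem \ref{Hodge}): I will contract $\bar X$ down to a rational surface whose whole N\'eron--Severi space would be spanned by a negative-definite configuration of $(-2)$-curves.

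First I would pin down the $(-3)$-curve. By Lemma \ref{Case233-4} the divisor $D-D_2-D_3$ contains exactly one $(-3)$-curve, so if $D_1$ meets a $(-3)$-curve it meets this one; call it $D_4$, and note $D_4$ lies in the connected component $D^{(1)}$ of $D$ containing $D_1$. Since $D^{(1)}$ contains a $(-3)$-curve, $P_1$ is a non-Du Val singularity, hence (by the standing assumption of the section) cyclic, so $D^{(1)}$ is a chain of rational curves. I also record, from Lemma \ref{Case233-1}(2) together with Lemma \ref{Case233-4}, that the only components of $D$ of self-intersection less than $-2$ are precisely $D_2,D_3,D_4$, all $(-3)$-curves, and that $C\cdot D_i=1$ for $i=1,2,3$ by Lemma \ref{Zhan2}.

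Next comes the contraction. The curve $C$ is a $(-1)$-curve meeting $D_1$ once, so after contracting $C$ the image of $D_1$ is a $(-1)$-curve; I contract it and keep contracting the $(-1)$-curves that appear running along the chain $D^{(1)}$ in the direction away from $D_4$. This is a genuine composition of blow-downs $\sigma\colon\bar X\to\tilde Y$: because $D^{(1)}$ is a chain, each step produces exactly one new $(-1)$-curve (the next component of the chain) and the cascade terminates at the far end. The essential bookkeeping is that the only components of $D$ adjacent to the contracted locus are $D_2,D_3$ (adjacent to $C$) and $D_4$ (adjacent to $D_1$), and each meets that locus in a single curve, so each has its self-intersection raised by exactly one, from $-3$ to $-2$; every other surviving component of $D$ keeps self-intersection $-2$. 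Thus on $\tilde Y$ every surviving component of $D$ is a $(-2)$-curve.

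Finally I count. If $\sigma$ contracts $C$ together with $t$ components of $D$ (namely $D_1$ and the part of the chain beyond it), then $\rho(\tilde Y)=\rho(\bar X)-1-t=10-t$ since $\rho(\bar X)=1+\#D=11$, while the number of surviving components of $D$ is $\#D-t=10-t=\rho(\tilde Y)$. These survivors are $(-2)$-curves forming a disjoint union of Du Val (negative-definite) configurations, so their classes are linearly independent; being $\rho(\tilde Y)$ in number they span $\Pic(\tilde Y)\otimes\rr$, which would force the intersection form on $\tilde Y$ to be negative definite, contradicting Theorem \ref{Hodge}. The step I expect to be most delicate is precisely the claim that after the cascade \emph{every} surviving component of $D$ is exactly a $(-2)$-curve: this is where Lemma \ref{Case233-4} is indispensable, as it rules out a second surviving $(-3)$-curve, and where one must check that $D_2,D_3,D_4$ each meet the contracted locus in a single curve so their self-intersection rises by exactly one. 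One should also confirm the contraction sequence is well defined (each contracted curve is truly a $(-1)$-curve, guaranteed by the chain structure of the cyclic point $P_1$) and that $\rho(\tilde Y)\geq 1$, which is clear since the four components force $\#D^{(1)}\leq 7$, hence $t\leq 6$ and $\rho(\tilde Y)\geq 4$.
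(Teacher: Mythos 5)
There is a genuine gap in your proof: the blow-down bookkeeping is wrong, and with it the negative definiteness that your Hodge-index contradiction requires. When you contract $C$, the images of $D_1,D_2,D_3$ all pass through the image point of $C$; in particular $D_2$ and $D_3$ now meet the new $(-1)$-curve $D_1$. Contracting $D_1$ therefore raises their self-intersections a \emph{second} time: on the surface obtained by contracting $C$ and $D_1$, the images of $D_2$ and $D_3$ are $(-1)$-curves meeting each other with multiplicity two, and the image of $D_4$ is a $(-2)$-curve meeting both. In particular
\[
(\sigma_*D_2+\sigma_*D_3)^2=-1-1+2\cdot 2=2>0,
\]
so the surviving components of $D$ do not form a disjoint union of Du Val configurations and are not negative definite; Theorem \ref{Hodge} then gives no contradiction, since a spanning set containing a class of positive square is exactly what signature $(1,\rho-1)$ permits. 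Your claim that $D_2,D_3,D_4$ ``each meet the contracted locus in a single curve, so each has its self-intersection raised by exactly one'' computes intersections on $\bar{X}$ rather than on the intermediate surfaces (equivalently, it ignores the multiplicities in the total transform of the contracted point). The same error underlies the claim that each step of the cascade produces exactly one new $(-1)$-curve: after contracting $C$ and $D_1$, the curves $D_2$, $D_3$ and the neighbour of $D_1$ away from $D_4$ are all $(-1)$-curves through one point, and every further contraction in your cascade raises the self-intersections of all of them, and of $D_4$ as well. This is precisely why the paper applies the contraction-plus-Hodge argument only to curves meeting a \emph{single} component of $D$ (for instance $\Gamma$ meeting only $D_4$ in the case $(2,3,5)$), where no new intersections among surviving components are created.

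For comparison, the paper's proof runs along entirely different lines and makes essential use of $\Gamma\sim 2C+D_1+D_2+D_3+K_{\bar{X}}$: from $\Gamma\cdot D_4=2$ and lower bounds on the coefficients of $D^{\sharp}$, the inequality $\Gamma\cdot(K_{\bar{X}}+D^{\sharp})<0$ forces $D_2$ and $D_3$ to be isolated, $D^{(1)}=D_1+D_4$, and $D^{(4)}$ to consist of six $(-2)$-curves; contracting $C$, $\Gamma$ and all $(-2)$-curves then yields a degree-one Du Val del Pezzo surface with two singular points, whose classification forces $D^{(4)}=E_6$, and the resulting orders $m_1=5$, $m_2=m_3=3$, $m_4=12$ violate the inequality of Theorem \ref{BMY}. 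If you want to salvage your plan, you need $\Gamma$ and these coefficient estimates rather than a cascade of contractions through $C$.
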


\begin{proof}
Assume that $D_1$ meets a $(-3)$-curve $D_4$. Then $\Gamma\cdot D_4=2$. Let $\alpha_1,\alpha_2,\alpha_3,\alpha_4$ be the coefficients of $D_1, D_2, D_3, D_4$ in $D^{\sharp}$ correspondingly. Note that $\alpha_4\geq\frac{2}{5}$. If $D_2$ meets a component $D_5$ of $D$ and $\alpha_5$ is the coefficient of $D_5$ in $D^{\sharp}$, then $\alpha_5\geq\frac{1}{5}$, so $\Gamma\cdot D^{\sharp}\geq 1$, a contradiction. Hence $D_2$ and $D_3$ are isolated components of $D$. If  $D_4$ meets another component $D_5$ of $D$, then $\alpha_4\geq\frac{1}{2}$, so $\Gamma\cdot D^{\sharp}\geq 1$, a contradiction. If $D_1$ meets another component $D_5$ of $D$ and $\alpha_5$ is the coefficient of $D_5$ in $D^{\sharp}$, then $\Gamma\cdot D_5=1$, $\alpha_4\geq\frac{3}{7}$, and $\alpha_5\geq\frac{1}{7}$. Thus $\Gamma\cdot D^{\sharp}\geq 1$, a contradiction. Hence $D^{(1)}=D_1+D_4$, $D^{(2)}=D_2$, $D^{(3)}=D_2$, and $D^{(4)}$ consists of six $(-2)$-curves. Let $g\colon\bar{X}\rightarrow Y$ be the contraction of $C$ and $\Gamma$, let $Y\rightarrow\bar{Y}$ be the contraction of all $(-2)$-curves. We obtain a del Pezzo surface of degree one  with Picard number one. Moreover, $\bar{Y}$ has two singular points,  one of them being of type $A_2$. By the classification results (see, for example, {\cite{AN}}, {\cite{Fur}}) we see that the dual graph of $D^{(4)}$ is $E_6$. Thus we have $m_1=5, m_2=m_3=3, m_4=24$. We have \[\frac{m_1-1}{m_1}+\frac{m_2-1}{m_2}+\frac{m_3-1}{m_3}+\frac{m_4-1}{m_4}\geq 3,\] a contradiction (see Theorem \ref{BMY}).
\end{proof}

\begin{lemma}
\label{Case233-6}
$D_1$ meets at most one component of $D$.
\end{lemma}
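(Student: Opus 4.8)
The plan is to argue by contradiction, assuming $D_1$ meets two components $D_4$ and $D_5$ of $D$. By Lemma \ref{Case233-5} the curve $D_1$ meets no $(-3)$-curve, so $D_4^2 = D_5^2 = -2$, and one checks that $F_0 = 2C + 2D_1 + D_4 + D_5$ satisfies $F_0^2 = 0$, $F_0\cdot K_{\bar X} = -2$ and $F_0\cdot C = F_0\cdot D_1 = F_0\cdot D_4 = F_0\cdot D_5 = 0$; thus $|F_0|$ defines a conic bundle $\phi\colon\bar X\to\pp^1$ in which $F_0$ is a singular fiber consisting of the $(-1)$-curve $C$ and the $(-2)$-curves $D_1,D_4,D_5$. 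By Lemma \ref{Pr1} every singular fiber of $\phi$ then consists only of $(-1)$- and $(-2)$-curves and every fiber $(-1)$-curve is minimal; in particular all singular fibers are of type (a), (b) or (c) of Lemma \ref{Zhan0}.

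Next I would locate the $(-3)$-curves. By Lemma \ref{Case233-1}(2) and Lemma \ref{Case233-4} the only $(-3)$-curves of $D$ are $D_2$, $D_3$ and the unique $(-3)$-curve $D_6$ of $D-D_2-D_3$; none of these can lie in a fiber, so all three are horizontal. Since $C\cdot D_6 = 0$ (as $C$ meets only $D_1,D_2,D_3$) and $D_1\cdot D_6 = 0$ by Lemma \ref{Case233-5}, the intersection $D_6\cdot F_0 = D_4\cdot D_6 + D_5\cdot D_6$ is positive, so $D_6$ meets $D_4$ or $D_5$; as $D^{(1)}$ is a chain it meets exactly one, say $D_4$, giving the subchain $D_6 - D_4 - D_1 - D_5$ of $D^{(1)}$. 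A direct computation from $2C + D_1 + D_2 + D_3 + K_{\bar X}\sim\Gamma$ gives $\Gamma\cdot D_4 = \Gamma\cdot D_5 = \Gamma\cdot D_6 = 1$ and $\Gamma\cdot D_i = 0$ for every other component $D_i$ of $D$, so that $\Gamma\cdot(K_{\bar X}+D^\sharp) = -1 + \alpha_4 + \alpha_5 + \alpha_6$, where $\alpha_j$ is the coefficient of $D_j$ in $D^\sharp$. Since $\Gamma$ is a $(-1)$-curve not contained in $D$, projection formula gives $\Gamma\cdot(K_{\bar X}+D^\sharp) = (\pi_*\Gamma)\cdot K_X < 0$, hence $\alpha_4 + \alpha_5 + \alpha_6 < 1$.

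The key step is to turn this inequality into a bound on the length of $D^{(1)}$. Solving the system $(K_{\bar X}+D^\sharp)\cdot D_j = 0$ along the chain $[3,2,\dots,2]$ of length $k+1$ formed by $D_6$ and $k$ $(-2)$-curves, one finds $\alpha_4 + \alpha_5 + \alpha_6 = (3k-1)/(2k+3)$, which is $<1$ only for $k\le 3$; moreover any chain in which $D_6$ is not an endpoint, or which extends beyond $D_5$, makes this sum $\ge 1$ (for instance $[2,3,2,2,2]$ gives $8/7$). Since $D_1$ must be internal, $k = 3$, so $D^{(1)}$ is exactly $D_6 - D_4 - D_1 - D_5$, whence $\#D^{(1)} = 4$ and $m_1 = 9$. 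By Lemma \ref{Case233-1}(1) we have $\#D = 10$, and by Lemma \ref{Case233-2} each of $D^{(2)}, D^{(3)}$ has at most two components (containing $D_2$, $D_3$), leaving the finitely many possibilities $(\#D^{(2)},\#D^{(3)},\#D^{(4)})\in\{(1,1,4),(1,2,3),(2,1,3),(2,2,2)\}$ for the remaining six curves.

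Finally I would compute the orders $m_i$ of the local fundamental groups and apply Theorem \ref{BMY}. In the smallest case $(1,1,4)$ one gets $(m_1,m_2,m_3,m_4)=(9,3,3,5)$ and $\tfrac{8}{9}+\tfrac{2}{3}+\tfrac{2}{3}+\tfrac{4}{5} = \tfrac{136}{45} > 3$, contradicting $(*)$; enlarging any $D^{(i)}$, or taking it non-cyclic, only increases each $\tfrac{m_i-1}{m_i}$, so every remaining case violates $(*)$ as well. This contradiction shows that $D_1$ meets at most one component of $D$. The main obstacle is the continued-fraction bookkeeping that yields the sharp chain-length bound $k\le 3$; after that the Theorem \ref{BMY} check is a short finite verification, though its razor-thin margin ($136/45$ versus $3$) means the arithmetic must be carried out carefully.
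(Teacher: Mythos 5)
Your first half is sound and coincides with the paper's opening move: the conic bundle defined by $|2C+2D_1+D_4+D_5|$ together with Lemma \ref{Pr1} forces the unique $(-3)$-curve $D_6$ of $D-D_2-D_3$ to be horizontal, hence to meet $D_4$ or $D_5$; your discrepancy computation then correctly pins down $D^{(1)}$ as the chain $[3,2,2,2]$ with $m_1=9$. (One small slip there: the claim $\Gamma\cdot D_i=0$ for ``every other component'' is false whenever $D_2$ or $D_3$ is not isolated, since any component adjacent to $D_2$ or $D_3$ meets $\Gamma$; this is harmless because extra nonnegative terms only strengthen the inequality $\alpha_4+\alpha_5+\alpha_6<1$.) Note that at this point the paper finishes quite differently: it contracts $C$ and then all $(-2)$- and $(-3)$-curves, and applies Lemmas \ref{Zhan1}, \ref{Zhan2} to $\Gamma$ on the contracted surface, where $\Gamma$ would have to meet two connected components of the boundary divisor --- a contradiction requiring no enumeration of the remaining singular points at all.

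The genuine gap is in your final paragraph. Lemma \ref{Case233-2} bounds the valence of $D_2$ and $D_3$ (each meets at most one component of $D$), not the size of $D^{(2)}$ and $D^{(3)}$: these are chains of the form $[3,2,\ldots,2]$ with $D_2$, resp.\ $D_3$, at an end, and may have any length. So besides your four cases one must also consider $(\#D^{(2)},\#D^{(3)},\#D^{(4)})=(1,3,2),(3,1,2),(2,3,1),(1,4,1),(4,1,1)$, etc. Your monotonicity claim (``enlarging any $D^{(i)}$ only increases $(m_i-1)/m_i$'') cannot close this, because the total $\#D^{(2)}+\#D^{(3)}+\#D^{(4)}=6$ is fixed: enlarging one component forces shrinking another, and since $(m-1)/m$ is concave an uneven redistribution can make the sum drop. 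Concretely, for $(\#D^{(2)},\#D^{(3)},\#D^{(4)})=(1,4,1)$, i.e.\ $D^{(2)}=D_2$, $D^{(3)}$ the chain $[3,2,2,2]$, $D^{(4)}$ of type $A_1$, one gets $(m_1,m_2,m_3,m_4)=(9,3,9,2)$ and
\[
\frac{8}{9}+\frac{2}{3}+\frac{8}{9}+\frac{1}{2}=\frac{53}{18}<3,
\]
so Theorem \ref{BMY} yields no contradiction whatsoever. This case and its mirror $(4,1,1)$ must be excluded by a different mechanism --- for instance, here $C\cdot(K_{\bar{X}}+D^{\sharp})=-1+\frac{2}{9}+\frac{1}{3}+\frac{4}{9}=0$, contradicting ampleness of $-K_X$ --- but your proof as written never makes such an argument, so it is incomplete exactly where its ``short finite verification'' was supposed to finish the job.
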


\begin{proof}
Assume that $D_1$ meets two components $D_4, D_5$ of $D$. Note that $D_4^2=D_5^2=-2$. Let $\phi\colon\bar{X}\rightarrow\pp^1$ be a $\pp^1$-fibration defined by $|2C+2D_1+D_4+D_5|$. By Lemma \ref{Pr1} we see that every singular fiber of $\phi$ consists of $(-1)$- and $(-2)$-curves. So, we may assume that $D_4$ meets $(-3)$-curve $D_6$. Let $g\colon\bar{X}\rightarrow Y$ be the contraction of $C$, let $Y\rightarrow\bar{Y}$ be the contraction of all $(-n)$-curves ($n=2,3$). We obtain a del Pezzo surface with Picard number one and with log terminal singularities. Since $\Gamma$ meets the $(-3)$-curve $D_6$ and meets $D_4$, we see that $\Gamma$ is minimal and $|\Gamma+\tilde{D}+K_Y|\neq\emptyset$, where $\tilde{D}$ is exceptional divisor of minimal resolution. On the other hand, $\Gamma$ meets two connected components of $\tilde{D}$, a contradiction.
\end{proof}

\begin{lemma}
\label{Case233-7}
The singular point $P_1$ is not a rational double point of type $D_n$ or $E_n$.
\end{lemma}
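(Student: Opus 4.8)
The plan is to argue by contradiction, mirroring the treatment of the $D_n$/$E_n$ possibility in the proof of Lemma \ref{Case234-1}(8). Suppose $P_1$ is a Du Val point of type $D_n$ or $E_n$. Then the connected component $D^{(1)}$ of $D$ lying over $P_1$ consists entirely of $(-2)$-curves and its dual graph is a tree with a single trivalent vertex $v$. First I would invoke Lemma \ref{Case233-6}: the curve $D_1$ meets at most one irreducible component of $D$, and since $D_1$ sits inside the connected configuration $D^{(1)}$ this forces $D_1$ to be a leaf of $D^{(1)}$ meeting nothing outside $D^{(1)}$, which is compatible with $C\cdot D_1 = 1$.

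Next I would produce a conic bundle $\phi\colon\bar X\to\pp^1$ in which $C$ is a component of a singular fiber $F$ of type (b) in Lemma \ref{Zhan0}. The fiber will be supported on $C\cup D^{(1)}$: the spine runs from the $(-1)$-curve $C$ through $D_1$ up its leg to the trivalent vertex $v$, and two of the remaining legs of $v$ supply the two unit-multiplicity leaves of the fork. For $P_1=D_n$, where the two short legs of $v$ are already single curves, the whole of $C\cup D^{(1)}$ is this fiber; for $P_1=E_n$ one takes $F$ to be $C$ together with $D^{(1)}$ minus the far ends of the longer legs, those end curves becoming sections of $\phi$. In every case $\Supp(F)\subseteq C\cup D^{(1)}$.

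Once $F$ is in hand, Lemma \ref{Pr1} applies: $C$ is a minimal curve lying in a fiber consisting only of $(-2)$-curves and the single $(-1)$-curve $C$, so every singular fiber of $\phi$ consists only of $(-1)$- and $(-2)$-curves. I would then locate the offending $(-3)$-curve. By Lemma \ref{Case233-4} there is exactly one $(-3)$-curve $D_0$ in $D-D_2-D_3$; since $D^{(1)}$ carries only $(-2)$-curves and, by Lemma \ref{Case233-1}(5), the components $D^{(2)}$ and $D^{(3)}$ already account for the $(-3)$-curves $D_2$ and $D_3$, the curve $D_0$ must lie in the fourth component $D^{(4)}$.

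Finally comes the contradiction: $D^{(4)}$ is a connected component of $D$ distinct from $D^{(1)},D^{(2)},D^{(3)}$ and disjoint from $C$, because $C$ meets only $D_1,D_2,D_3$. Hence $D^{(4)}$ is disjoint from $\Supp(F)\subseteq C\cup D^{(1)}$, so $D^{(4)}\cdot F=0$ and every component of $D^{(4)}$ — in particular $D_0$ — is vertical for $\phi$, i.e. contained in a fiber. That fiber then contains the $(-3)$-curve $D_0$, contradicting the previous paragraph, so $P_1$ cannot be of type $D_n$ or $E_n$. The hard part will be the construction of the conic bundle with $C$ in a type-(b) fiber: specifically, for $E_n$ one must check that $C\cup D^{(1)}$, with the ends of the longer legs removed, genuinely realizes the type-(b) dual graph of Lemma \ref{Zhan0} and that the removed curves are sections of $\phi$. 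The remaining steps are bookkeeping with the already-established Lemmas \ref{Case233-1}, \ref{Case233-4} and \ref{Case233-6}.
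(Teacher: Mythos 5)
Your proposal is correct and follows essentially the same route as the paper: place $C$ in a type-(b) fiber of a conic bundle, invoke Lemma \ref{Pr1} to force every singular fiber to consist of $(-1)$- and $(-2)$-curves, locate the unique $(-3)$-curve of $D-D_2-D_3$ in $D^{(4)}$, and derive a contradiction from its being vertical. In fact you make explicit two steps the paper leaves implicit --- that Lemmas \ref{Case233-1}(5) and \ref{Case233-4} push the $(-3)$-curve into $D^{(4)}$, and that $D^{(4)}\cdot F=0$ forces $D^{(4)}$ to be vertical --- so your write-up is a faithful, slightly more detailed version of the paper's argument.
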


\begin{proof}
Assume that $P_1$ is a rational double point of type $D_n$ or $E_n$. By Lemma \ref{Case233-6}, $D_1$ intersects an irreducible component $D_4$ of $D$.
Note that there exist $(-2)$-curves
$D_4,D_5,\ldots, D_k$ such that $D_i$ meets $D_{i+1}$ and $D_k$ is the central component, i.e., it intersects three other irreducible components of $D$. Let $D_{k+1}$ and $D_{k+2}$ be the other two irreducible components of $D$ that intersect $D_k$. Hence, $|2C+2D_1+2D_4+\cdots+2D_k+D_{k+1}+D_{k+2}|$ defines a $\pp^1$-fibration $\phi\colon\bar{X}\rightarrow\pp^1$ and the  singular fiber that contains $C$ is of type (b) of Lemma \ref{Zhan0}. By Lemma \ref{Pr1}, every singular fiber of $\phi$ contains only $(-1)$-curves and $(-2)$-curves. Since $D^{(4)}$ is contained in a fiber, it consists of $(-2)$-curves, a contradiction to Lemma \ref{Case233-1} (4).
\end{proof}

\begin{lemma}
\label{Case233-8}
The divisor $D_1+D_2+D_3$ meets at least two irreducible components of $D-D_1-D_2-D_3$.
\end{lemma}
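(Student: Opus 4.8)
The plan is to mirror the argument of Lemma \ref{Case234-4} and argue by contradiction. Suppose that $D_1+D_2+D_3$ meets only one component $D_4$ of $D-D_1-D_2-D_3$. For a component $D_k$ of $D$ with $k\neq 1,2,3$ I would use
$$\Gamma\cdot D_k=(2C+D_1+D_2+D_3+K_{\bar X})\cdot D_k=(D_1+D_2+D_3)\cdot D_k-D_k^2-2,$$
since $C\cdot D_k=0$ and $K_{\bar X}\cdot D_k=-D_k^2-2$; the same computation gives $\Gamma\cdot C=\Gamma\cdot D_1=\Gamma\cdot D_2=\Gamma\cdot D_3=0$, so $\Gamma$ is disjoint from $C,D_1,D_2,D_3$. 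By Lemma \ref{Case233-1} every component of $D-D_2-D_3$ is a $(-2)$- or $(-3)$-curve and, by Lemma \ref{Case233-4}, there is a unique $(-3)$-curve $T$ in $D-D_2-D_3$. The displayed formula then shows that $\Gamma$ has positive intersection only with the connector $D_4$ and with $T$ (each equal to $1$ in the generic situation), while every other $(-2)$-curve, being disjoint from $D_1+D_2+D_3$, meets $\Gamma$ in $0$.

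Next I would contract $\Gamma$, say $\bar X\to Y$, and then contract all $(-n)$-curves ($n\geq 2$), $Y\to\bar Y$, obtaining a del Pezzo surface with log terminal singularities and $\rho(\bar Y)=1$. Because $\Gamma$ is disjoint from $C,D_1,D_2,D_3$, pushing the relation $2C+D_1+D_2+D_3+K_{\bar X}\sim\Gamma$ forward yields
$$2C+D_1+D_2+D_3+K_Y\sim 0.$$
Thus on $Y$ we land exactly in the $2C+D_1+D_2+D_3+K\sim 0$ subcase of $(2,3,3)$ treated at the start of this section, whose analysis forces the exceptional divisor of $\bar Y$ to contain a connected component of type $E_6$ (together with an $A_2$), a configuration already classified through the non-cyclic case \ref{DE4} and hence outside the standing hypotheses here.

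Finally I would argue that this $E_6$ cannot occur. On $\bar X$ every connected component of $D$ is a rod — here Lemma \ref{Case233-7} is essential, since it rules out a $D_n$- or $E_n$-type component at $P_1$ — so $D$ carries no trivalent vertex. One must then check that contracting the single $(-1)$-curve $\Gamma$, which joins only the points it touches on $D_4$ and $T$, cannot create the branch vertex of an $E_6$ graph, yielding the desired contradiction. I expect this last step to be the main obstacle: it requires tracking the dual graph carefully through the contraction of $\Gamma$, and in particular disposing separately of the degenerate configuration in which the connector $D_4$ coincides with $T$ (so that $D_4$ is forced by Lemma \ref{Case233-5} to meet $D_2$ or $D_3$ and $\Gamma\cdot T=2$), which I would handle using the bound $(*)$ of Theorem \ref{BMY} applied to $m_1,m_2,m_3,m_4$.
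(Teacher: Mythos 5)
Your proposal tracks the paper's own argument through most of the proof: the intersection computation showing $\Gamma$ is disjoint from $C,D_1,D_2,D_3$ and meets only the connector $D_4$ and the unique $(-3)$-curve $T$, the contractions $\bar X\to Y\to\bar Y$, the pushed-forward relation $2C+D_1+D_2+D_3+K_Y\sim 0$, and the identification of the collection $E_6+A_2$ are exactly the paper's steps. The divergence, and the genuine gap, is in the closing contradiction. The paper does not attempt to show that ``no branch vertex can be created''; it observes that the rank-six configuration forced by the classification is precisely $g(D^{(4)})$, and since $\Gamma$ meets $D^{(4)}$ in at most one point (on $T$), the dual graph of $D^{(4)}$ on $\bar X$ is itself of type $E_6$; as $D^{(4)}$ contains the $(-3)$-curve $T$, the point $P_4$ would be simultaneously non-Du Val and non-cyclic, contradicting the standing hypothesis of this section that every non-Du Val point is cyclic. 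Your substitute for this step rests on the unproved premise that every connected component of $D$ is a rod: Lemma \ref{Case233-7} excludes a $D_n$- or $E_n$-shaped component only at $P_1$, and at this stage of the argument nothing yet prevents $P_4$ from being a Du Val point of type $D_n$ or $E_n$ (this can only happen when $T$ sits in the component attached through $D_4$, but that subcase must be excluded by an argument, not by citing \ref{Case233-7}). So ``$D$ carries no trivalent vertex'' is exactly what needs proof, and you acknowledge you cannot complete the verification.

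There is also a concrete failure in your treatment of the degenerate case $D_4=T$ (the connector is the $(-3)$-curve, meeting $D_2$ or $D_3$ by Lemma \ref{Case233-5}): Theorem \ref{BMY} gives no contradiction there. The four points would be $A_1$, the chain $[3,3]$, the $\frac13(1,1)$ point, and (at worst) $E_6$, with local fundamental groups of orders $2,8,3,24$, and
\[
\frac{1}{2}+\frac{7}{8}+\frac{2}{3}+\frac{23}{24}=3,
\]
so $(*)$ holds with equality. What actually kills this case is positivity: $\Gamma\cdot T=2$ and the coefficient of $T$ in $D^{\sharp}$ is at least $\frac12$, hence $\Gamma\cdot(K_{\bar X}+D^{\sharp})\geq -1+2\cdot\frac12=0$, contradicting the fact that $-(K_{\bar X}+D^{\sharp})=-\pi^*K_X$ has strictly positive intersection with every curve not contained in $D$. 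In short: your first five steps coincide with the paper's, but the finish needs the paper's observation about $P_4$ (non-Du Val and non-cyclic at once) rather than graph-chasing, and the degenerate case needs this numerical positivity argument rather than the bound $(*)$.
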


\begin{proof}
Assume that $D_1+D_2+D_3$ meets only one irreducible component $D_4$ of $D-D_1-D_2-D_3$. Then $D^(4)$ has $6$ irreducible components.  By Lemma \ref{Case233-1} and Lemma  \ref{Case233-4}, we see that $\Gamma$ meets $D_4$ and a $(-3)$-curve $D_5$. By blowing up the intersection points of $\Gamma$ and $D_5$ sufficiently many times, we derive a contradiction to Lemma \ref{BMY}.
\end{proof}

\begin{lemma}
\label{Case233-9}
At least one of $D_2, D_3$ is isolated component of $D$.
\end{lemma}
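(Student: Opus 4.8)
The plan is to argue by contradiction, so suppose both $D_2$ and $D_3$ are non-isolated. By Lemma \ref{Case233-2} each of them meets exactly one component of $D$; call these $D_4\in D^{(2)}$ and $D_5\in D^{(3)}$. Since $D^{(2)}$ and $D^{(3)}$ contain at most one $(-3)$-curve each (namely $D_2$ and $D_3$) by Lemma \ref{Case233-1}, both $D_4$ and $D_5$ are $(-2)$-curves. Everything hinges on the intersection numbers of $\Gamma\sim 2C+D_1+D_2+D_3+K_{\bar X}$: using $C\cdot D_i=1$ for $i=1,2,3$, the fact that $D_1,D_2,D_3$ lie in distinct connected components of $D$, and adjunction $K_{\bar X}\cdot D_k=-2-D_k^2$, I would first record that
\[
\Gamma\cdot D_2=\Gamma\cdot D_3=0,\qquad \Gamma\cdot D_4=\Gamma\cdot D_5=1 .
\]

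First I would treat the case in which $D_1$ is non-isolated. By Lemma \ref{Case233-6} it has a unique neighbour $D_6$, which is a $(-2)$-curve by Lemma \ref{Case233-5}, and one checks $\Gamma\cdot D_6=1$. The divisor $F:=2\Gamma+D_4+D_6$ then satisfies $F^2=0$ and $F\cdot K_{\bar X}=-2$, and its support is the chain $D_4-\Gamma-D_6$, i.e.\ a fibre of type $(a)$ in Lemma \ref{Zhan0} consisting of two $(-2)$-curves and the single $(-1)$-curve $\Gamma$. Let $\phi\colon\bar X\to\pp^1$ be the conic bundle having $F$ as a fibre. By Lemma \ref{Pr1} every singular fibre of $\phi$ consists only of $(-1)$- and $(-2)$-curves. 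But $F\cdot D_3=2\,\Gamma\cdot D_3+D_4\cdot D_3+D_6\cdot D_3=0$, so the $(-3)$-curve $D_3$ is contained in a fibre of $\phi$ — a contradiction. This disposes of the non-isolated case cleanly.

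It remains to handle the case $D_1$ isolated, so that $P_1$ is of type $A_1$ and $\alpha_1=0$. The unique $(-3)$-curve $D'$ of $D-D_2-D_3$ furnished by Lemma \ref{Case233-4} then lies in the fourth component $D^{(4)}$, and $\Gamma$ meets precisely $D_4,D_5,D'$, once each, so $\Gamma\cdot(K_{\bar X}+D^\sharp)=-1+\alpha_4+\alpha_5+\beta$, where $\beta$ is the coefficient of $D'$. Because $D_2,D_3$ each have a single neighbour, the relations $(K_{\bar X}+D^\sharp)\cdot D_2=(K_{\bar X}+D^\sharp)\cdot D_3=0$ give $\alpha_4=3\alpha_2-1$ and $\alpha_5=3\alpha_3-1$, while $\alpha_2=\tfrac12-\tfrac1{2m_2}$ and $\alpha_3=\tfrac12-\tfrac1{2m_3}$. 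Substituting, the gap between the two values is
\[
-\,C\cdot(K_{\bar X}+D^\sharp)\;-\;\bigl(-\,\Gamma\cdot(K_{\bar X}+D^\sharp)\bigr)=\beta-\Bigl(\tfrac1{m_2}+\tfrac1{m_3}\Bigr).
\]
Since $\#D=10$ (Lemma \ref{Case233-1}), $D^{(4)}$ has $\#D^{(4)}=9-\tfrac{m_2-1}2-\tfrac{m_3-1}2$ components with exactly one $(-3)$-curve, whence $\beta\ge \#D^{(4)}/(2\#D^{(4)}+1)$ and $2\#D^{(4)}+1=21-m_2-m_3$; a direct estimate over the finitely many admissible pairs $(m_2,m_3)$ then yields $\beta>\tfrac1{m_2}+\tfrac1{m_3}$. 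Thus $0<-\Gamma\cdot(K_{\bar X}+D^\sharp)<-C\cdot(K_{\bar X}+D^\sharp)$, contradicting the minimality of $C$.

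I expect the isolated case to be the main obstacle. Unlike the non-isolated case it cannot be closed by a conic bundle — no $(-3)$-curve is forced into a fibre of $|2\Gamma+D_4+D_5|$, since $D'$ becomes a $2$-section there — and it is not excluded by Theorem \ref{BMY} alone for many distributions of the components; the contradiction has to be extracted from the minimality of $C$, and the one genuinely fiddly point is the elementary verification that $\beta>\tfrac1{m_2}+\tfrac1{m_3}$ in every admissible configuration.
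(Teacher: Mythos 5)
Your first case (the one where $D_1$ is not isolated) contains a genuine gap: Lemma \ref{Pr1} cannot be invoked for the conic bundle defined by $|2\Gamma+D_4+D_6|$. That lemma, both in its statement and in its proof, is about a fibre whose unique $(-1)$-curve is a \emph{minimal} curve: the whole proof is the estimate $2a=\sum n_ie_i\ge\bigl(\sum n_i\bigr)a$, which requires $a=-C\cdot(K_{\bar{X}}+D^{\sharp})\le e_i$, i.e.\ minimality of the distinguished $(-1)$-curve. In your fibre that curve is $\Gamma$, and nothing guarantees $\Gamma$ is minimal; in fact in this case it provably is not, because it meets four distinct components of $D$ (namely $D_4$, $D_5$, $D_6$ and the $(-3)$-curve $D'$ supplied by Lemma \ref{Case233-4}, each once), while a minimal curve is restricted by Lemma \ref{Pr2} and Lemma \ref{Zhan2} (via the standing hypothesis $|E+D+K_{\bar{X}}|=\emptyset$ for every minimal curve $E$) to meet at most three components, lying in distinct connected components of $D$. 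Consequently you may not conclude that every singular fibre of $\phi$ consists of $(-1)$- and $(-2)$-curves, and the observation that $D_3$ lies in a fibre of $\phi$ is, by itself, no contradiction. The case can still be closed with your own tools, but it needs an actual argument: the fibre $F'$ through $C$ contains $D_3$ (as $C\cdot D_3=1$), hence besides $C$ (which has multiplicity one, since $D_2$ is a section) it contains $(-1)$-curves of total multiplicity at least two, so minimality of $C$ forces $2\bigl(-\Gamma\cdot(K_{\bar{X}}+D^{\sharp})\bigr)=-F'\cdot(K_{\bar{X}}+D^{\sharp})\ge 3\bigl(-C\cdot(K_{\bar{X}}+D^{\sharp})\bigr)$; on the other hand, the coefficient relations you use in your second case, extended by $\alpha_6=2\alpha_1$, give $-\Gamma\cdot(K_{\bar{X}}+D^{\sharp})=3\bigl(-C\cdot(K_{\bar{X}}+D^{\sharp})\bigr)+\alpha_1-\beta$, and since $3\bigl(-C\cdot(K_{\bar{X}}+D^{\sharp})\bigr)+2\alpha_1\le\frac{3}{2m_2}+\frac{3}{2m_3}\le\frac{3}{5}$ while $2\beta\ge\frac{2}{3}$, that inequality fails. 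None of this appears in your write-up, so as written the first case is unproven.

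Your second case ($D_1$ isolated) is correct: I checked the identity $-C\cdot(K_{\bar{X}}+D^{\sharp})+\Gamma\cdot(K_{\bar{X}}+D^{\sharp})=\beta-\frac{1}{m_2}-\frac{1}{m_3}$, the bound $\beta\ge\#D^{(4)}/(2\#D^{(4)}+1)$ (the minimum over positions of the $(-3)$-curve in the chain is attained at an end), and the final inequality for every admissible pair ($m_2,m_3$ odd, at least $5$, with $m_2+m_3\le 18$). Note, though, that your closing remark is off: the paper does close this case with a conic bundle, and without splitting on $D_1$ at all. It uses $|2\Gamma+D_4+D_5|$, shows the fibre through $C$ and $D_1$ is $C+D_1+E$ with $E$ a $(-1)$-curve meeting the $(-3)$-curve of $D-D_2-D_3$ with multiplicity two, then contracts $C$ and applies Lemma \ref{Zhan1} on the resulting surface to pin down $D^{(1)}=D_1$, $D^{(2)}=D_2+D_4$, $D^{(3)}=D_3+D_5$, $D^{(4)}=D'$, contradicting $\#D=10$; that single geometric argument covers both of your cases, whereas your (valid) replacement for the isolated case trades it for explicit discrepancy computations.
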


\begin{proof}
Assume that $D_2$ and $D_3$ meets irreducible components $D_4$ and $D_5$ of $D$ correspondingly. Note that $D_4^2=D_5^2=-2$. Let $\phi\colon\bar{X}\rightarrow\pp^1$ be a $\pp^1$-fibration defined by $|2\Gamma+D_4+D_5|$. Let $F$ be a fiber that contains $C, D_1$. Since $\Gamma$ meets every irreducible component of $D$ that meets $D_1$, we see that $F=C+D_1+E$, where $E$ is a $(-1)$-curve. Put $D_6$ is a $(-3)$-curve. Note that $\Gamma\cdot D_6=1$. Then $E\cdot D_6=2$. Let $g\colon\bar{X}\rightarrow Y$ be the contraction of $C$, let $Y\rightarrow\bar{Y}$ be the contraction of all $(-n)$-curves ($n=2,3$). We obtain a del Pezzo surface of degree one and with Picard number is equal one. We see that $g(E)$ is a minimal curve. Moreover, $|g(E)+\tilde{D}+K_Y|\neq\emptyset$, where $\tilde{D}$ is exceptional divisor of minimal resolution. Then, by Lemma \ref{Zhan1}, we see that $D_6$ is isolated component of $D$. Moreover, $E$ meets only two components of $D$, $D_1$ and $D_6$. Hence, $D_4$ and $D_5$ meets only $D_2, D_3$ and $D_1$ is an isolated component of $D$, i.e. $D^{(1)}=D_1$, $D^{(2)}=D_2+D_4$, $D^{(3)}=D_3+D_5$, $D^{(4)}=D_6$, a contradiction with $\#D=10$.
\end{proof}

So, we may assume that $D_1$ meets one irreducible component $D_4$ of $D$, $D_2$ meets one irreducible component $D_5$ of $D$, $D_3$ is an isolated component. Let $D_6$ be a $(-3)$-curve in $D-D_2-D_3$.

\begin{lemma}
\label{Case233-10}
The unique $(-3)$-curve in $D-D_2-D_3$ is a component of $D^{(4)}$.
\end{lemma}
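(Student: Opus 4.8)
The plan is to pin down the location of the unique $(-3)$-curve of $D-D_2-D_3$ — call it $D_6$, which exists and is unique by Lemma \ref{Case233-4} — among the four connected components $D^{(1)},D^{(2)},D^{(3)},D^{(4)}$, and to exclude every possibility except $D^{(4)}$. First I would note that $D_6$ cannot lie in $D^{(2)}$, which already contains the $(-3)$-curve $D_2$, because each $D^{(i)}$ carries at most one $(-3)$-curve by Lemma \ref{Case233-1}(5); and $D_6\neq D_3=D^{(3)}$ since $D_3$ is isolated. Hence $D_6\in D^{(1)}$ or $D_6\in D^{(4)}$, and only the former must be ruled out.

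So suppose for contradiction that $D_6\in D^{(1)}$. Under the standing hypotheses recorded just before the lemma the relevant components become completely explicit: $D^{(1)}$ is a cyclic chain with self-intersection string $-2,\dots,-2,-3,-2,\dots,-2$ (write $[2^a,3,2^b]$, with $a$ twos and $b$ twos flanking the $(-3)$-curve), where $D_1$ is an endpoint by Lemma \ref{Case233-6}, $D_4$ is its neighbour, and $D_6$ is the $(-3)$-curve, not adjacent to $D_1$ by Lemma \ref{Case233-5}; moreover $D^{(2)}=[3,2^{q-1}]$ with $D_2$ an endpoint and $D_5$ its neighbour, $D^{(3)}=\{D_3\}$, and $D^{(4)}$ consists of $(-2)$-curves only, since $D_6$ was the only $(-3)$-curve outside $\{D_2,D_3\}$.

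The key idea is to play the second $(-1)$-curve $\Gamma=2C+D_1+D_2+D_3+K_{\bar X}$ against the minimal curve $C$. A direct computation (using $C\cdot D_i=1$ precisely for $i=1,2,3$, the formula $\Gamma\cdot D_j=(D_1+D_2+D_3)\cdot D_j-D_j^2-2$ for $j\neq 1,2,3$, and Lemmas \ref{Case233-2}, \ref{Case233-5}, \ref{Case233-6}) shows that $\Gamma$ meets $D$ only in $D_4,D_5,D_6$, each transversally. Hence $\Gamma\cdot(K_{\bar X}+D^{\sharp})=-1+\alpha_4+\alpha_5+\alpha_6$, while $C\cdot(K_{\bar X}+D^{\sharp})=-1+\alpha_1+\alpha_2+\alpha_3$, where $\alpha_i$ is the coefficient of $D_i$ in $D^{\sharp}$. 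Since $-K_X$ is ample and $\Gamma$ is not a component of $D$, we have $\Gamma\cdot(K_{\bar X}+D^{\sharp})=\pi^*K_X\cdot\Gamma<0$, so $-\Gamma\cdot(K_{\bar X}+D^{\sharp})>0$; minimality of $C$ then forces $\alpha_4+\alpha_5+\alpha_6\le\alpha_1+\alpha_2+\alpha_3$.

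Finally I would contradict this by evaluating the discrepancies. Solving $\pi^*K_X\cdot D_i=0$ along each chain gives $\alpha_1=\tfrac{b+1}{m_1}$, $\alpha_4=2\alpha_1$, $\alpha_6=(a+1)\alpha_1$ with $m_1=(a+2)(b+2)-1$, together with $\alpha_2=\tfrac{q}{2q+1}$, $\alpha_5=\tfrac{q-1}{2q+1}$, and $\alpha_3=\tfrac13$. Thus $(\alpha_4+\alpha_5+\alpha_6)-(\alpha_1+\alpha_2+\alpha_3)=\tfrac{(a+2)(b+1)}{(a+2)(b+2)-1}-\tfrac{1}{2q+1}-\tfrac13$, and I must show this is strictly positive. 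Here the counting identity $\#D=10$ of Lemma \ref{Case233-1}(1), equivalently $a+b+q\le 7$, is decisive: it forces $a+b\le 5$ when $q=2$, and a short check over the finitely many admissible $(a,b,q)$ shows the expression is always positive, the tightest case being $D^{(1)}=[2^5,3]$, $D^{(2)}=[3,2]$, where it equals $\tfrac{7}{13}-\tfrac15-\tfrac13=\tfrac{1}{195}>0$. This contradicts $\alpha_4+\alpha_5+\alpha_6\le\alpha_1+\alpha_2+\alpha_3$, so $D_6\in D^{(4)}$. The main obstacle is exactly this last inequality: its margin is minuscule, so neither the bound of Theorem \ref{BMY} nor positivity of $(K_{\bar X}+D^{\sharp})^2$ alone excludes the long chains, and only the exact discrepancy values together with the length bound $\#D=10$ close the argument.
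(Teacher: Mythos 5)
Your proposal is correct, but it proves the lemma by a genuinely different mechanism than the paper. The paper's own proof is structural: assuming the $(-3)$-curve $D_6$ lies in $D^{(1)}$, it contracts $C$ and then all $(-2)$- and $(-3)$-curves, observes that on the intermediate surface $Y$ the curve $\Gamma$ becomes a minimal curve with $|\Gamma+\tilde{D}+K_Y|\neq\emptyset$, and derives a contradiction from Lemma \ref{Zhan1}: the wheel decomposition would allow $\Gamma$ to meet only two components of $\tilde{D}$ (the two ends of the rod $D''$), whereas $\Gamma$ visibly meets three, namely the images of $D_4$, $D_6$ and $D_5$. You instead stay on $\bar{X}$ and run a purely numerical argument: you verify (correctly, via $\Gamma\cdot D_j=(D_1+D_2+D_3)\cdot D_j-D_j^2-2$ and Lemmas \ref{Case233-2}, \ref{Case233-5}, \ref{Case233-6}) that $\Gamma$ meets $D$ exactly in $D_4+D_5+D_6$, solve the linear systems $(K_{\bar{X}}+D^{\sharp})\cdot D_i=0$ along the chains to get the exact discrepancies $\alpha_1=\tfrac{b+1}{m_1}$, $\alpha_4=2\alpha_1$, $\alpha_6=(a+1)\alpha_1$, $\alpha_2=\tfrac{q}{2q+1}$, $\alpha_5=\tfrac{q-1}{2q+1}$, $\alpha_3=\tfrac13$ (I checked these, including $m_1=(a+2)(b+2)-1$), and contradict the minimality inequality $\alpha_4+\alpha_5+\alpha_6\le\alpha_1+\alpha_2+\alpha_3$ by a finite case check over $a\ge 2$, $q\ge 2$, $a+b+q\le 7$. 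Your identification of the worst case $(a,b,q)=(5,0,2)$ with margin $\tfrac{7}{13}-\tfrac{8}{15}=\tfrac{1}{195}$ is accurate, and your remark that the counting bound $\#D=10$ is indispensable is borne out: at $(a,b,q)=(6,0,2)$ one would get exact equality and the argument would collapse. The trade-off is clear: the paper's route is short and conceptual but leans on tersely justified claims about the contracted surface (minimality of $\Gamma$ on $Y$ and non-emptiness of $|\Gamma+\tilde{D}+K_Y|$), while yours is elementary, self-contained and independently verifiable, at the cost of explicit computation with razor-thin margins; both rest equally on the structural Lemmas \ref{Case233-1}--\ref{Case233-9} to pin down the configuration.
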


\begin{proof}
Assume that $D_6$ is a component of $D^{(1)}$. Let $g\colon\bar{X}\rightarrow Y$ be the contraction of $C$, let $Y\rightarrow\bar{Y}$ be the contraction of all $(-n)$-curves where $n=2$ or $3$. Then we obtain a del Pezzo surface of Picard number one. Since $\Gamma$ intersects two irreducible components of $g(D^{(1)}$, we see that $\Gamma$ is a minimal curve, and  $|\Gamma+\tilde{D}+K_Y|\neq\emptyset$, where $\tilde{D}$ is exceptional divisor of minimal resolution. On the other hand, $\Gamma$ intersects two connected components of $\tilde{D}$, a contradiction.
\end{proof}

Now, by Theorem \ref{BMY}, we see that $D^(1) = D_1+D_4$, $D^(2) = D_2+D_5$, $D^{3} = D_3$, and $D^(4) = D_6$. Let $\phi\colon\bar{X}\rightarrow\pp^1$ be a $\pp^1$-fibration defined by $|2\Gamma + D_4+D_5|$. Then $D_6$ is a $2$-section, $D_1$ and $D_2$ are sections, and the remaining componenets of $D$ are fiber components. Consider the fiber $F$ containing $C$ and $D_3$. We see that $F = C+D_3+E_1+E_2$ where $E_1$ and $E_2$ are $(-1)$-curves, both intersecting $D_6$ and $D_3$. Let $F_2$ be the fiber containing the remaining components of $D$. By Lemma \ref{Zhan0} (1), $F_2$ has only one $(-1)$-curve, which is impossible.

Consider the case $(2,2,n)$. Assume that $2C+D_1+D_2+D_3+K_{\bar{X}}\sim 0$. Note that $D_1, D_2, D_3$ are isolated components of $D$. Let $\phi\colon\bar{X}\rightarrow\pp^1$ be a $\pp^1$-fibration defined by $|2C+D_1+D_2|$. We see that every singular fiber of $\phi$ contains only $(-1)$- and $(-2)$-curves. Moreover, by Lemma \ref{Zhan0} every singular fiber has only one $(-1)$-curve. Since $X$ has four singular points, we see that $\phi$ has one singular fiber of type (a) and one singular fiber of type (b). Then we have a double cover $D_3\rightarrow\pp^1$ with at least three ramification points, a contradiction with Hurwitz formula
(see Theorem \ref{Hurwitz}).

Assume that $2C+D_1+D_2+D_3+K_{\bar{X}}\sim \Gamma$. Let $\phi\colon\bar{X}\rightarrow\pp^1$ be a $\pp^1$-fibration defined by $|2C+D_1+D_2|$. We see that every singular fiber of $\phi$ contains only $(-1)$- and $(-2)$-curves. Moreover, every $(-1)$-curve in fibers is minimal. Let $F$ be a fiber of $\phi$ that contains $\Gamma$. We see that $\Gamma$ is a minimal curve. We may assume that $|\Gamma+D+K_{\bar{X}}|=\emptyset$. Then $\Gamma\cdot D^{(i)}\leq 1$, $i=1,2,3,4$. So, every $D_i$ ($i=1,2,3$) does not meet a $(-m)$-curve for some $m\geq 3$. Moreover, every $D_i$ ($i=1,2,3$) meets at most one component of $D$. Since $F$ consists of $(-1)$- and $(-2)$-curves, we see that $\Gamma$ does not meet $(-n)$-curve ($n\geq 3$). Then $D-D_3$ consists of $(-2)$-curves. Since $D_3$ meets at most one irreducible component $D_4$ of $D$, we see that the coefficient before $D_4$ in $D^{\sharp}$ is less than before $D_3$. Hence, $-\Gamma\cdot(K_{\bar{X}}+D^{\sharp})>-C\cdot(K_{\bar{X}}+D^{\sharp})$, a contradiction.

Assume that $C$ meets only one component $D_1$ of $D$. Let $\phi\colon\bar{X}\rightarrow Y$ be the consequence of contractions of $(-1)$-curves in $C+D$. We have two cases $\phi(C+D)$ consists of $(-n)$-curves ($n\geq 2$) and  $\phi(C+D)=\tilde{C}+\tilde{D}$, where $\tilde{C}$ is a $(-1)$-curve and $\tilde{D}$ consists of $(-n)$-curves. Moreover, $\tilde{C}$ meets at least two component of $\tilde{D}$. Assume that $\phi(C+D)$ consists of $(-n)$-curves ($n\geq 2$). Then the irreducible components of $\phi(D)$ generate the Picard group. On the other hand, $\phi(D)$ is negative definite, a contradiction with \ref{Hodge}. Assume that $\phi(C+D)=\tilde{C}+\tilde{D}$. Let $Y\rightarrow\bar{Y}$ be the contraction of all $(-n)$-curves ($n\geq 2$). Note that $\bar{Y}$ is a del Pezzo surface with log terminal singularities. Moreover, $\bar{Y}$ has at least five log terminal singularities, a contradiction.

So, we may assume that every minimal curve meets exactly two irreducible components of $D$.

Assume that $C$ meets two component $D_1$ and $D_2$ of $D$. We may assume that $D_1^2=-2$. Assume that $D_1, D_2$ correspond to singularities $P_1,P_2$. Put $P_3, P_4$ are another two singular points. Let $D^{(1)}, D^{(2)}, D^{(3)}, D^{(4)}$ be the connected component of $D$ correspond to $P_1, P_2, P_3, P_4$ correspondingly. Let $m_1, m_2, m_3, m_4$ be the orders of local fundamental groups of $P_1, P_2, P_3, P_4$ correspondingly.

Assume that $D_1$ meets two component of $D$. Assume that $D_2^2\leq -3$. Let $\bar{X}\rightarrow Y$ be the contraction of $C$ and $Y\rightarrow\bar{Y}$ be the contraction of all $(-n)$-curves ($n\geq 2$). Then $\rho(\bar{Y})=1$ and $\bar{Y}$ has five log terminal singularities, a contradiction. Assume that $D_2^2=-2$. Let $\phi\colon\bar{X}\rightarrow\pp^1$ be the $\pp^1$-fibration defined by $|2C+D_1+D_2|$. We see that every singular fibers of $\phi$ consists of $(-1)$- and $(-2)$-curves (see Lemma \ref{Pr1}). Let $F_1$ and $F_2$ be the singular fibers of $\phi$ that contains $D^{(3)}$ and $D^{(4)}$ correspondingly. Since every horizontal component of $D$ is a section, we see that $F_1$ and $F_2$ are of type (c). By Lemma \ref{Zhan0} we see that there exists at least three horizontal component of $D$. Then at least one $(-1)$-curve in $F_i$ ($i=1,2$) meets three component of $D$. Since this curve is minimal, we have a contradiction.

Assume that $P_1$ is a rational double point of type $A$.
Then there exists a $\pp^1$-fibration $\phi\colon\bar{X}\rightarrow\pp^1$ such that $C$ is contained in fiber of type (b). Moreover, $D_2$ is a 2-section. So, every singular fiber contains only $(-1)$- and $(-2)$-curves (see Lemma \ref{Pr1}). Assume that there exists a fiber of type (a) $2E+D_3+D_4$, where $E$ is a minimal curve and $D_3 D_4$ are components of $D$. If $E$ meets $D_2$, then there exists a minimal curve that meets three components of $D$. If $E$ does not meet $D_2$, then $D_3$ and $D_4$ meet $D_2$. So, $|E+D+K_{\bar{X}}|\neq \emptyset$, a contradiction. So, there exists no a fiber of type (a). Let $F_1$ and $F_2$ be the singular fibers of $\phi$ that contains $D^{(3)}$ and $D^{(4)}$ correspondingly. Since there exists no a singular fibers of type (a), we see that $F_1\neq F_2$. Assume that both $F_1$ and $F_2$ are of type (b). Then we have a double cover $D_3\rightarrow\pp^1$ with at least three ramification points, a contradiction with Hurwitz formula (see Theorem \ref{Hurwitz}). So, one of them is of type (c). We may assume that $F_1$ is of type (c). Then by Lemma \ref{Zhan0} there exists a section $D_3$ of $\phi$ that is a components of $D^{(1)}$. Hence, there exists a $(-1)$-curve in $F_1$ that meets three components of $D$. Since every $(-1)$-curve in $F_1$ is minimal, we have a contradiction.

So, we may assume that $P_1$ and $P_2$ are cyclic singularities. Moreover, $D_1$ meets only one irreducible component of $D$. Let $Y\rightarrow\bar{X}$ be the consequence of blow-ups of intersection point of $C$ and $D_2$, let $Y\rightarrow\bar{Y}$ be the contraction of all $(-n)$-curves ($n\geq 2$). We obtain a rational surface with Picard number one and with log terminal singularities. By Theorem \ref{BMY} we see that $P_3$ and $P_4$ are rational double points of type $A_1$.

\begin{lemma}
\label{CaseDD-1}
There exists no  $\pp^1$-fibration $\phi\colon\bar{X}\rightarrow\pp^1$ such that $\phi$ has no $n$-section  in $D$ with $n\geq 2$, every section of $\phi$ in $D$ is a component of $D^{(1)}+D^{(2)}$ and $\phi$ has at most three sections in $D$.
\end{lemma}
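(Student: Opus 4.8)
The plan is to argue by contradiction: assume such a conic bundle $\phi\colon\bar{X}\to\pp^1$ exists and extract an impossible surface. First I would record the rigid part of the configuration. Since $P_3$ and $P_4$ are of type $A_1$, the components $D^{(3)}$ and $D^{(4)}$ are single $(-2)$-curves, and by the first two hypotheses they are vertical for $\phi$ while every horizontal component of $D$ is a section lying in $D^{(1)}+D^{(2)}$. Writing $s$ for the number of sections, Lemma \ref{Zhan0}(1) gives $s=1+\sum_F(\#\{(-1)\text{-curves in }F\}-1)$, so the third hypothesis $s\le 3$ forces $\sum_F(\#\{(-1)\text{-curves in }F\}-1)\le 2$: at most two singular fibres carry a second $(-1)$-curve. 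I would also record the general-fibre inequality: if $f$ is a general fibre then $-f\cdot(K_{\bar{X}}+D^\sharp)>0$ while $-f\cdot K_{\bar{X}}=2$, whence $\sum_{\text{sections}}\alpha_i=f\cdot D^\sharp<2$, which bounds the coefficients of the sections and limits how $D^{(1)},D^{(2)}$ can split into vertical and horizontal parts.

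Next I would analyze the fibres $F_3\ni D^{(3)}$ and $F_4\ni D^{(4)}$. Because $D^{(3)}$ (resp. $D^{(4)}$) is a $(-2)$-curve that is a whole connected component of $D$, any fibre-neighbour of it that were a $(-2)$- or $(-n)$-curve would again lie in $D$ and be joined to it, contradicting that it is isolated; hence each of $D^{(3)},D^{(4)}$ meets only $(-1)$-curves inside its fibre. Comparing with Lemma \ref{Zhan0}(2) this rules out type (b) for these fibres and pins them to type (a) (shape $D_a+2E+D_b$) or type (c) (a chain $E_1+D^{(3)}+E_2$), with the $A_1$-curve adjacent to a $(-1)$-curve; by Lemma \ref{Pr1} every such $(-1)$-curve is minimal. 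This is the structural heart: each $A_1$-fibre either is a type (c) fibre, contributing an extra $(-1)$-curve and eating into the budget $s\le 3$, or is a type (a) fibre whose double $(-1)$-curve is a minimal curve meeting two components of $D$.

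With this in hand I would run the case analysis on $(F_3,F_4)$ and on the distribution of the at most three sections between $D^{(1)}$ and $D^{(2)}$. In the type (c) cases the two extra $(-1)$-curves exhaust the section budget, forcing $s=3$ and a very specific splitting; I would then take the minimal $(-1)$-curve $E$ produced in $F_3$ (or $F_4$), which meets some component $D_\bullet$ of $D$, blow up the point $E\cap D_\bullet$, and contract all $(-n)$-curves ($n\ge 2$) on the result, exactly as in the $(2,3,5)$ and $(2,3,4)$ arguments. The outcome is a surface of Picard number one with only log terminal singularities but with five singular points, contradicting the non-existence of such del Pezzo surfaces \cite{Bel1}. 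In the type (a) cases I would instead use that the double $(-1)$-curve, together with a section reaching the same fibre, yields a minimal curve meeting four components of $D$, contradicting Lemma \ref{Pr2}; the degenerate subcase $F_3=F_4=D^{(3)}+2E+D^{(4)}$ is handled by contracting $E$ and all $(-2)$-curves and invoking the Hodge index theorem \ref{Hodge} (the image of $D$ generates the Picard group yet is negative definite), or again Theorem \ref{BMY}.

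The main obstacle is precisely this final case analysis: keeping careful track, through the blow-up-and-contract operations, of how the four singular points redistribute and of the fact that the resulting surface remains of Picard number one, so that the ``five singular points'' contradiction is genuinely triggered in every branch. The inequality $\sum_{\text{sections}}\alpha_i<2$ together with the order bounds from Theorem \ref{BMY} should close the few branches where the contraction argument alone is inconclusive.
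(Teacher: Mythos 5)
Your setup matches the paper's: you isolate the fibre containing the $A_1$-curve $D^{(3)}$, use its isolation to see that its fibre-neighbours are $(-1)$-curves, and aim for a contradiction via blow-ups and contractions. But the closing mechanism fails in every branch. In your type (c) case you blow up \emph{once} at $E\cap D_\bullet$ and claim the contracted surface has five singular points, contradicting \cite{Bel1}. It does not: the strict transform of $E$ becomes a $(-2)$-curve attached to $D^{(3)}$, so after contracting all $(-n)$-curves one still has exactly four singular points (the $A_1$ point merely becomes $A_2$), and a single blow-up need not violate Theorem \ref{BMY} either, since the orders $m_1,m_2$ may be small. The paper's proof is engineered precisely around this difficulty: it first uses the hypothesis ``at most three sections'' together with a Hodge-index argument (contract $E_1+D_3$ and apply Theorem \ref{Hodge}) to produce a $(-1)$-curve $E_1$ in the fibre meeting \emph{exactly} two components of $D$, namely $D_3$ and one section $D_4$; it then blows up at \emph{two} points --- the intersection of the ambient minimal curve $C$ with $D_2$, $k_1$ times, and $E_1\cap D_4$, $k_2$ times --- and contracts. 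This keeps $\rho=1$ and log terminal (chain) singularities, and as $k_1,k_2\to\infty$ \emph{three} of the four local fundamental group orders tend to infinity, so $\sum(m_P-1)/m_P\to 1+1+1+\tfrac12>3$, violating Theorem \ref{BMY}. The unbounded iteration and the second blow-up site (which your plan omits entirely --- you never use $C$) are essential: with only one site the sum stays below $3$, and with bounded $k_i$ the inequality need not fail.

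There are further gaps. Your reduction of $F_3,F_4$ to types (a)/(c) quotes Lemma \ref{Zhan0}(2), which applies only to fibres consisting of $(-1)$- and $(-2)$-curves; you have not excluded fibre components of self-intersection $\le -3$ away from $D^{(3)}$ (the paper is silent here too, treating only type (c), but you cannot cite \ref{Zhan0}(2) for it). Your repeated appeal to Lemma \ref{Pr1} is unfounded: its hypothesis is that a \emph{minimal} curve lies in a fibre of $\phi$ made of $(-2)$-curves and a single $(-1)$-curve, which is not verified for this conic bundle (the minimal curve $C$ need not even be vertical); hence you cannot conclude $E$, $E_1$, $E_2$ are minimal, and your type (a) contradiction via Lemma \ref{Pr2} collapses with it. Finally, in the degenerate case $F_3=F_4=D^{(3)}+2E+D^{(4)}$ your Hodge-index argument fails as stated: after contracting $E$ one has $\bigl(g(D^{(3)})+g(D^{(4)})\bigr)^2=0$ (this is the fibre class), so the image of $D$ is not negative definite.
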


\begin{proof}
Put $D^{(3)}=D_3$. Note that $D_3$ is a $(-2)$-curves. Let $F$ be a singular fiber that contains $D_3$.

We claim that $F$ is of type (c). Indeed, since $D_3$ is an isolated component of $D$, we see that $D_3$ meets at most two $(-1)$-curves in $F$. Moreover, if $D_3$ meets two $(-1)$-curves in $F$, then $F$ is of type (c). Assume that $D_3$ meets only one $(-1)$-curve $E$ in $F$. Then multiplicity of $E$ in $F$ is at least two. Hence, $E$ does not meet sections in $D$. So, $E$ meets exactly two components $D_3$ and $D'_3$ of $D$. Let $W\rightarrow\bar{X}$ be blowups the intersection point of $E$ and $D'_3$ $k$ times. Let $W\rightarrow\bar{W}$ be the contraction of all $(-n)$-curves ($n\geq 2$). We see that $\rho(\bar{W})=1$ and $\bar{W}$ has only log terminal singularities. For sufficiently big $k$ we have a contradiction with \ref{BMY}.

Since $F$ is of type (c),  we can write  $F=D_3+E_1+E_2$ for some $(-1)$-curve $E_1$ and $E_2$. Since $D_3$ is isolated component of $D$, we see that $E_1, E_2$ meet every section of $\phi$. Note that $E_1$ and $E_2$ meet at least two components of $D$. Indeed, assume that $E_1$ meets only one component $D_3$ of $D$. Let $g\colon\bar{X}\rightarrow Y$ be the contraction $E_1+D_3$. We see that the components of $g(D)$ generate Picard group and $g(D)$ is negative definite, a contradiction with Hodge index theorem \ref{Hodge}. Since $\phi$ has at most three section in $D$, we see that at least one of $E_1, E_2$ meet only two components $D_3, D_4$ of $D$. We may assume that $E_1$ meets $D_3$ and $D_4$. Let $W\rightarrow\bar{X}$ be blowups the intersection point of $C$ and $D_2$ $k_1$ times and the intersection point of $E_1$ and $D_4$ $k_2$ times. $W\rightarrow\bar{W}$ be the contraction of all $(-n)$-curves ($n\geq 2$). We see that $\rho(\bar{W})=1$ and $\bar{W}$ has only log terminal singularities. For sufficiently big $k_1$ and $k_2$ we have a contradiction with \ref{BMY}.
\end{proof}

Assume that $D_1$ meets one component of $D$, $D_2$ meets two components of $D$. Let $g\colon\bar{X}\rightarrow Y$ be the consequence of contraction of $(-1)$-curves in $C+D^{(1)}$.

Assume that $D^{(1)}$ contains a $(-n)$-curves. So, $g(D^{(1)})$ is a linear chain of $(-n)$-curves. Let $Y\rightarrow\bar{Y}$ be the contraction of $g(D^{(1)})$, $g(D^{(2)}-D_2)$, $g(D^{(3)})$, $g(D^{(4)})$. Let $n=\#D$. Assume that $g$ contracts $m$ curves in $D$ and one $(-1)$-curve $C$. So, $\rho(Y)=n+1-(m+1)=n-m$, $\#g(D)=n-m$. In $Y\rightarrow\hat{Y}$ we contract $g(D)-g(D_2)$, i.e. we contract $n-m-1$ curves. So, $\rho(\hat{Y})=1$. We obtain a del Pezzo surface with Picard number one and with five singular points, a contradiction.
So, $D^{(1)}$ consists of $(-2)$-curves. Consider $g(D_2)$. Assume that $g(D_2)^2\leq -2$. Then the irreducible components of $g(D)$ generate Picard group and $g(D)$ is negative definite, a contradiction with Hodge index theorem \ref{Hodge}. Assume that $g(D_2)^2\geq 0$. Then there exists a $\pp^1$-fibration $\phi\colon\bar{X}\rightarrow\pp^1$ such that $\phi$ has two or three section in $D$ has no $n$-section in $D$ for $n\geq 1$. Moreover, all section of $\phi$ are components of $D^{(1)}$ or $D^{(2)}$, a contradiction (see Lemma \ref{CaseDD-1}). So, we may assume that $g(D_2)^2=-1$. Let $h\colon Y\rightarrow Y'$ be the consequence of contraction of $(-1)$-curves in $g(D^{(2)}$. We have two cases
\begin{enumerate}
\item $h(g(D^{(2)}))$ is a linear chain of $(-n)$-curves ($n\geq 2$). Then the irreducible components of $g(D)$ generate Picard group and $g(D)$ is negative definite. We have a contradiction with Hodge index theorem \ref{Hodge}.
\item There exists a $\pp^1$-fibration $\phi\colon\bar{X}\rightarrow\pp^1$ such that $\phi$ has one or two section in $D$ has no $n$-section in $D$ for $n\geq 1$. Moreover, all section of $\phi$ are components of $D^{(1)}$ or $D^{(2)}$ and there exist at most three sections of $\phi$, a contradiction (see Lemma \ref{CaseDD-1}).
\end{enumerate}

Assume that $D_1$ and $D_2$ meet at most one component of $D$. Let $g\colon\bar{X}\rightarrow Y$ be the consequence of contraction of $(-1)$-curves in $C+D^{(1)}+D^{(2)}$. As above, we have two cases
\begin{enumerate}
\item $g(D^{(2)})$ is a linear chain of $(-n)$-curves ($n\geq 2$). Then the irreducible components of $g(D)$ generate Picard group and $g(D)$ is negative definite, a contradiction to Hoge index theorem (Theorem  \ref{Hodge}).
\item There exists a $\pp^1$-fibration $\phi\colon\bar{X}\rightarrow\pp^1$ such that $\phi$ has one or two section in $D$ has no $n$-section in $D$ for $n\geq 1$. Moreover, all section of $\phi$ are components of $D^{(1)}$ or $D^{(2)}$ and there exist at most three sections of $\phi$, a contradiction (see Lemma \ref{CaseDD-1}).
\end{enumerate}

\section{The Case with a non-cyclic singular point that is not a rational double point}\label{noncycDV}
Throughout this section $X$ is a del Pezzo surface with 4 log terminal singular points. Assume that $X$ has a non-cyclic singular point $P$ that is not a rational double point.
Let $f\colon\tilde{X}\rightarrow X$ be the blow up of central component $D_1$ of the exceptional divisor over $P$ i.e. we contract every $(-n)$-curves ($n\geq 2$) on $\bar{X}$ except the central component. Since $P$ is non-cyclic, we see that $D_1$ passes through three singular points.  Let $Q_1, Q_2, Q_3$ be three such singular points of $\tilde{X}$. Consider the minimal resolution of singularities of $\tilde{X}$. Let $D^{(1)}, D^{(2)}, D^{(3)}$ be the exceptional divisors that correspond to $Q_1, Q_2, Q_3$. Note that $\tilde{X}$ has six singular points. Since the Mori  cone has two extremal rays, we see that there exists another morphism $g\colon\tilde{X}\rightarrow Z$ where $g$ is the contraction of an extremal ray. Note that we have either $Z=\pp^1$ and $g$ is a $\pp^1$-fibration; or $Z$ is a del Pezzo surface with $\rho(Z)=1$ (see Lemma \ref{WeekD}).

\subsection{$Z=\pp^1$} In this section we have $g\colon\tilde{X}\rightarrow \pp^1$.
In this case we can show that the curve $D_1$ is a horizontal component of $g$. More precisely, we have the following lemma.
\begin{lemma}
\label{DE1}
Assume that $P$ is not a rational double point. Then the curve $D_1$ is a section, a 2-section or a 3-section of $g$. Moreover, if $D_1$ is a 3-section of $g$, then the dual graph of minimal resolution of $P$ is the following
$$
\xymatrix@R=0.8em{
&&&&&\bullet\\
\star\ar@{-}[r]\ar@{-}[r]&\bullet\ar@{-}[r]&\bullet\ar@{-}[r]&\cdots\ar@{-}[r]&\bullet\ar@{-}[r]&\bullet\ar@{-}[r]\ar@{-}[u]&\bullet
}\eqno(1)
$$ where $\star$ denotes a $(-3)$-curve, $\bullet$ denotes a $(-2)$-curve.
\end{lemma}

\begin{proof}
Note that the coefficient $a_1$ of $D_1$ in $D^\sharp$ is at least $\frac{1}{2}$. Since a general fiber $C$ of $g$ is a $(0)$-curve and $C\cdot(K_{\bar{X}}+D^\sharp)<0$, we see that $C\cdot D_1\leq 3$. Moreover, if $a_1<\frac{2}{3}$, then $a_1=\frac{1}{2}$ and the dual graph of minimal resolution of $P$ is (1).
\end{proof}

\begin{lemma}
\label{DE2}
Assume that $D_1$ is a 3-section of $g$. Then $X$ has two rational double points of type $A_1$, one singular point $P$ which dual graph of minimal resolution is (1), and one singular point $Q$ which dual graph of minimal resolution is the following
$$
\xymatrix@R=0.8em{
&&\bullet\\
\bullet\ar@{-}[r]\ar@{-}[r]&\bullet\ar@{-}[r]&\ast\ar@{-}[r]\ar@{-}[u]&\star
}\eqno(2)
$$ where $\star$ denotes a $(-3)$-curve, $\bullet$ denotes a $(-2)$-curve, $\ast$ denotes a $(-(k-1))$-curve where $k$ is the number of irreducible components of the minimal resolution of $P$.
\end{lemma}

\begin{proof}
Let $\bar{g}\colon\bar{X}\rightarrow\pp^1$ be the $\pp^1$-fibration induced by $g$. Since $D_1$ is a unique component of $D$ that lies on $\tilde{X}$, we see that $D_1$ is a unique component of $D$ that is not in
any fiber of $\bar{g}$. By Lemma \ref{Zhan0} we see that every singular fiber of $\bar{g}$ contains only one $(-1)$-curve. Since every singular fiber of $g$ contains at most two singular points and multiplicity of $(-1)$-curve in fibers of $\bar{g}$ is at least two, we see that every singular fiber meets $D_1$ in one or two points. Let $P_1,P_2,P_3$ be the singular points of $\tilde{X}$ on $D_1$, and $Q_1,Q_2,Q_3$ be the remaining singular points of $\tilde{X}$ .


We may assume that $P_1$ and $P_2$ are singular points of type $A_1$ by Lemma \ref{DE1}. Let $F_1$ and $F_2$ be the fibers that contain $P_1$ and $P_2$. Since $P_1$ and $P_2$ are of type $A_1$, we see that $F_1$ and  $F_2$ do not pass through $P_3$; and $F_1\neq F_2$. Also, we see that $F_1$ contains a singular point $Q_1$, $F_2$ contains a singular point $Q_2$. Let $F_3$ be the fiber that contains $P_3$. Since $P_3$ is not a rational double point, we see that the multiplicity of $F_3$ is at least three. So, $F_3$ meets $D_1$ in one point $P_3$ and $F_3$ contains $Q_3$. So, by  Hurwitz formula, we see that $F_1$ and $F_2$ is of type (a) in Lemma \ref{Zhan0}.

So, we have the following picture of the dual graph $$
\xymatrix@R=0.8em{
&\bullet&&&&&&\bullet\ar@{-}[r]&\circ\ar@{-}[r]\ar@{-}[rd]&\bullet&\circ\ar@{-}[r]&\bullet\\
\star\ar@{-}[r]&\ast\ar@{-}[u]\ar@{-}[r]&\bullet\ar@{-}[r]&\bullet\ar@{-}[r]&\circ\ar@{-}[r]\ar@{-}[r]&\star\ar@{-}[r]&\bullet\ar@{-}[r]&\cdots\ar@{-}[r]&\bullet\ar@{-}[r]&\bullet\ar@{-}[r]\ar@{-}[u]\ar@{-}[ru]&\bullet\ar@{-}[u]
}
$$  where $\star$ denotes a $(-3)$-curve, $\bullet$ denotes a $(-2)$-curve, $\circ$ denotes a $(-1)$-curve, $\ast$ denotes a $(-(k-1))$-curve, $k$ is the number of irreducible components of the minimal resolution of $P$.
\end{proof}

\begin{remark}
Note that by replacing $D_1$ to component of $D$ that correspond to $\ast$ in Lemma \ref{DE2}, we may assume that $D_1$ is either a section or a $2$-section, hence reduced to  Section \ref{P1Fib}.
\end{remark}

Now Lemma \ref{DE3} and Lemma \ref{DE4} complete the proof.

\subsection{$Z$ is a surface.} Assume that $g$ is birational. Then $g$ contracts a curve $E$. By Lemma \ref{WeekD} $Z$ is a log del Pezzo surface of Picard number one. Since $X$ has $4$ singular points, $E$ passes through at least two singular points. Assume that $E$ passes through three singular points $Q_1, Q_2, Q_3$. Consider a minimal resolution of singularities of $\tilde{X}$. Let $D^{(1)}, D^{(2)}, D^{(3)}$ be the exceptional divisors over $Q_1, Q_2, Q_3$, respectively. Since $E+D^{(1)}+D^{(2)}+D^{(3)}$ is negative definite, the proper transform of $E$ is a $(-1)$-curve. By abusing notation, the proper transform of $E$ is also denoted by $E$ if there is no confusion. Let $D'_1,D'_2,D'_3$ be the irreducible components of $D$ intersecting $E$ where $D'_i$ is a component of $D^{(i)}$ for each $i=1,2,3.$ Since $E\cdot(K_{\bar{X}}+D^{\sharp})<0$, we see that at least one of $D'_1,D'_2,D'_3$ is a $(-2)$-curve. Say it is $D'_1$. Assume that $D'_2$ is also a $(-2)$-curve, then $(2E+D'_1+D'_2)^2=0$, a contradiction. Since $E\cdot(K_{\bar{X}}+D^{\sharp})<0$, we see that we have the followings cases for $(-D'^2_2,-D'^2_3)$: (3,3),(3,4),(3,5). On the other hand, $$(4E+2D'_1+D'_2+D'_3)^2=8+D'^2_2+D'^2_3\geq 0,$$
a contradiction. Thus $E$ passes through exactly two singular points. Since $\tilde{X}$ has six singular points, $Z$ has exactly four log terminal singular points by \cite{Bel1}.

Let $P, P_1, P_2, P_3$ be the singular points of $X$; and  $Q_1, Q_2, Q_3$ be the singular points of $\tilde{X}$ on $D_1$.
\subsubsection{Assume that $E$ passes through two singular points of $X$: one on $D_1$ and the other outside $D_1$.}
We may assume that $E$ passes through $Q_1$ and $P_1$. Consider the consequence of contractions of $(-1)$-curves in $E+D$ on the minimal resolution $\bar{X}$ of $\tilde{X}$. We obtain a sequence of blow-downs:  $$\bar{X}\rightarrow X_1\rightarrow X_2\rightarrow\cdots\rightarrow X_n$$ such that each $X_k$ is a del Pezzo surface with log terminal singularities. Let $g_k\colon\bar{X}\rightarrow X_k$ be the composition of such morphisms from $\bar{X}$ to $X_k$. We have two cases.

$(1)$ There exists $X_k$ such that $g_k(D)$ contains a $(-1)$-curve $E'$ that intersects three other components of $g_k(D)$. Then $g_k(D)-E'$ contains five connected component. Let $X_k\rightarrow Y$ be the contraction of $g_k(D)-E'$. We see that $Y$ is a del Pezzo surface with $\rho(Y)=1$ and five singular points. A contradiction.

$(2)$ There exists $X_k$ such that $g_k(D)$ contains a $(-1)$-curve $E'$ that intersects exactly one component $D'_1$ of $g_k(D)$. Let $W\rightarrow X_k$ be the blowups of intersection point of $E'$ and $D'_1$ $k$ times, let $W\rightarrow \bar{W}$ be the contraction of all $(-n)$-curves ($n\geq 2$). We see that $\rho(\bar{W})=1$ and $\bar{W}$ has only log terminal singularities. For sufficiently big $k$, by Theorem \ref{BMY}, we see that $P_2$ and $P_3$ are of type $A_1$ and $\bar{W}$ contains two singular points of type $A_1$, one singular point of type $A_k$ and other cyclic singular point. This cases is considered in Section \ref{NoEmpty} and Section \ref{Empty}, and such case does not exist, a contradiction.

\subsubsection{Assume that $E$ passes through two singular points of $X$ outside $D_1$.} We may assume that $E$ passes through $P_1$ and $P_2$.
\begin{lemma}
\label{DE5}
Under the above assumptions,  $X$ contains three singular points that are not rational double points and the dual graph of the minimal resolution of $P$ is the following
$$
\xymatrix@R=0.8em{
&&&&&\bullet\\
\star\ar@{-}[r]\ar@{-}[r]&\bullet\ar@{-}[r]&\bullet\ar@{-}[r]&\cdots\ar@{-}[r]&\bullet\ar@{-}[r]&\bullet\ar@{-}[r]\ar@{-}[u]&\bullet
}\eqno(1)
$$ where $\star$ denotes a $(-3)$-curve, $\bullet$ denotes a $(-2)$-curve.
\end{lemma}

\begin{proof}
Since $E$ contracts to a smooth point, we see that $E$ meets a $(-2)$-curve $D_2$ and a $(-n)$-curve $D_3$ with $n\geq 3$. Since $-E\cdot(K_{\bar{X}}+D^{\sharp})>0$, we see that the dual graph of the minimal resolution of $P$ is the following
$$
\xymatrix@R=0.8em{
&&&&&\bullet\\
\star\ar@{-}[r]\ar@{-}[r]&\bullet\ar@{-}[r]&\bullet\ar@{-}[r]&\cdots\ar@{-}[r]&\bullet\ar@{-}[r]&\bullet\ar@{-}[r]\ar@{-}[u]&\bullet
}\eqno(1)
$$ where $\star$ denotes a $(-3)$-curve, $\bullet$ denotes a $(-2)$-curve. Moreover, $D_3$ is a $(-3)$-curve. Let $C$ be a minimal curve. Since $P$ is the non-cyclic singular points that is not a rational double point, we see that $C$ meets different connected components of $D$. So, if $P_3$ is a rational double point, then $E$ is a minimal curve. Since $E$ meets three components $D_1,D_2,D_3$ of $D$ and $D_1$ meets three other components of $D$, we see that $2E+D_1+D_2+D_3+K_{\bar{X}}\sim\Gamma$, where $\Gamma$ is a $(-1)$-curve (see Lemma \ref{Zhan3}). Then $\Gamma\cdot(K_{\bar{X}}+D^{\sharp})\geq 0$, a contradiction.
\end{proof}

By Lemma \ref{DE5}, we see that $Z$ has two singular points $Q_1$ and $Q_2$ of type $A_1$, and a cyclic singular point  $Q_3$, and a singular point $P_3$ that is not a rational double point.

We claim that $P_3$ is a noncyclic singular point. Indeed, if otherwise, then $Z$ has four cyclic singular points: two of them are of type $A_1$ and the remaining two are not rational double points, a contradiction with Section \ref{NoEmpty} and Section \ref{Empty}. Let $D^{(1)},D^{(2)},D^{(3)},D^{(4)}$ be the connected component of $D$ over $P,P_1,P_2,P_3$,
correspondingly.

Let $C_Z$ be a minimal curve. By Theorem \ref{BMY} we see that $D^{(2)},D^{(3)}$ consist of one irreducible component. Since $Z$ contains a noncylic singular point which is not a rational double point, $C_Z$ meets each connected component of $D_Z$ at most once.

Assume $C$ meets three components of $D$. Assume that $C$ meets $D^{(2)},D^{(3)},D^{(4)}$. Put $D_4$ is a component of $D^{(4)}$ that meets $C$. Assume that $D^2_4=-2$. Consider a $\pp^1$-fibration $\phi\colon\bar{X}\rightarrow\pp^1$ defined by $|2C+D_2+D_3|$. By Lemma \ref{Pr1} we see that every singular fiber consists of $(-1)$- and $(-2)$-curves. On the other hand, there exists a fiber that contains $D^{(1)}$, a contradiction. So, $D_4^2=-3$ and $C\cdot(K_{\bar{X}}+D^{\sharp})=E\cdot(K_{\bar{X}}+D^{\sharp})$. Hence, $E$ is also a minimal curve. As above, we have a contradiction. Assume that $C$ meets $D^{(1)},D^{(2)},D^{(3)}$. Then $C\cdot(K_{\bar{X}}+D^{\sharp})=E\cdot(K_{\bar{X}}+D^{\sharp})$. A contradiction. So, $C$ meets $D^{(1)}$ and $D^{(4)}$. Put $D_4$ is a component of $D^{(4)}$ that meets $C$ and $D_5$ is a component of $D^{(1)}$ that meets $C$ (maybe $D_1=D_5$). Assume that $C$ meets $D^{(1)},D^{(2)},D^{(4)}$. Since $C\cdot(K_{\bar{X}}+D^{\sharp})<0$, we see that either $D_4$ either $D_5$ is a $(-2)$-curve. Consider a $\pp^1$-fibration $\phi\colon\bar{X}\rightarrow\pp^1$ defined by $|2C+D_2+D_i|$ ($i=4,5$). By Lemma \ref{Pr1} we see that every singular fiber consists of $(-1)$- and $(-2)$-curves. On the other hand, there exists a fiber that contains $D^{(3)}=D_3$, a contradiction. Assume that $C$ meets $D^{(1)},D^{(3)},D^{(4)}$. Since $D^{(1)}$ does not consist of one curve, we see that $2C+D_3+D_4+D_5+K_{\bar{X}}\sim\Gamma$, where $\Gamma$ is a $(-1)$-curve. Then $D_1\neq D_5$ and $E$ does not meet $C$. Assume that $D_4$ and $D_5$ are $(-2)$-curves. Consider a $\pp^1$-fibration $\phi\colon\bar{X}\rightarrow\pp^1$ defined by $|2C+D_4+D_5|$. By Lemma \ref{Pr1} we see that every $(-1)$-curve in fibers is minimal. On the other hand, there exists a fiber that contains $E$, a contradiction. Since $C\cdot(K_{\bar{X}}+D^{\sharp})<0$, we see that $D_4$ is a $(-2)$-curve. Note that $\Gamma$ meets the curve in $D^{(1)}$ that meets $D_5$ and a $(-n)$-curve ($n\geq 3$) in $D^{(4)}$. Then $\Gamma\cdot(K_{\bar{X}}+D^{\sharp})>0$, a contradiction.

Assume $C$ meets two components $D_i$ and $D_j$ of $D$. First, consider the case that both $D_i$ and $D_j$ are $(-2)$-curves. Consider a $\pp^1$-fibration $\phi\colon\bar{X}\rightarrow\pp^1$ defined by $|2C+D_i+D_j|$. By Lemma \ref{Pr1} we see that every singular fiber consists of $(-1)$- and $(-2)$-curves. On the other hand, there exists a fiber that contains $D^{(3)}=D_3$, a contradiction. Now, consider the case that $D_i$ is a $(-2)$-curve and $D_j$ is a $(-3)$-curve. Assume that $D_j$ is $D_2$. So, we can blowup intersection point of $C$ and $D_i$. We obtain a surface $Y$. Let $Y\rightarrow\bar{Y}$ be the contraction of all $(-n)$-curves ($n\geq 2$). Then $\bar{Y}$ has only log terminal singularities and $\rho(\bar{Y})=1$. A contradiction with \ref{BMY}. Assume that $D_j$ is $D_3$. Then $D_i$ is a component of $D^{(1)}$ or $D^{(4)}$ and $D_i^2=-2$. Let $h\colon\bar {X}\rightarrow Y$ be the contraction of $C$. Put $h'\colon W\rightarrow Y$ are blowups of intersection point of $h(D_i)$ and $h(D_j)$ $k$ times. Let $W\rightarrow\bar{W}$ be the contraction of all $(-n)$-curves ($n\geq 2$). Then $\bar{W}$ has only log terminal singularities and $\rho(\bar{W})=1$. For sufficiently big $k$ we have a contradiction with \ref{BMY}.

So, $C$ meets two components of $D^{(1)},D^{(4)}$. Assume that $D_i^2=-2$, $D_j^2=-n$ ($n\geq 3$). Since $C\cdot (K_{\bar{X}}+D^{\sharp})<0$, we see that $D_i$ meets $(-2)$-curve $D_k$.
Consider a $\pp^1$-fibration $\phi\colon\bar{X}\rightarrow\pp^1$ defined by $|3C+D_j+2D_i+D_k|$. Let $F$ be the fiber of $\phi$ that contains $D_3$. Since $D_3^2=-1$, we see that the sum of multiplicity of $(-1)$-curves in $F$ is at least three. So, every $(-1)$-curve in $F$ is minimal, a contradiction.

Assume that $C$ meets exactly one component $D_i$ of $D$. By considering the consequence of contractions of $(-1)$-curves in $C+D$, we arrive at the case where either $C+D$ is negative definite or the image of $D$ have at least five connected components, a contradiction.

\subsubsection{Assume that $E$ passes through two singular points on $D_1$.} We may assume that $E$ passes through $Q_1$ and $Q_2$. 
Let $h\colon\bar{X}\rightarrow X'$ be the contraction of $E$. By abusing notation, we also denote $D_1$  for the images of $D_1$. Then there exists a linear chain of negative rational curves with self-intersection number at least $-2$, and a $(-1)$-curve $E'$ that intersects the end component of the linear chain and $D_1$. Indeed, if $Q_1$ or $Q_2$ is of type $A_1$, then after first contraction we have this picture. If both $Q_1$ and $Q_2$ are not of type $A_1$, then $P$ is of type $E$, and we get the conclusion after the second contraction.

Moreover, consider the consequence of the contractions of $(-1)$-curves in $E'+D$. Since $E\cdot(K_{\bar{X}}+D^{\sharp})<0$, we see that on some step we again obtain a linear chain of negative rational curves with self-intersection number at least $-2$, and a $(-1)$-curve $E'$ that intersects the end component of the linear chain and one  component that is not an end component. Indeed, since $E$ contracts $Q_1$ and $Q_2$ to a smooth point, if $E'$ meets $D_1$ and $D_k$, then we see that $D_1$, $D_2$, \ldots, $D_{k+1}$ are $(-2)$-curves.

We claim that  at least one of $P_1,P_2,P_3$ is not a rational double point. Indeed, if otherwise, our surface $T$ is a del Pezzo surface of Picard number one with at worst rational double points. By the classification, we see that the type of singularities is $2A_1+2A_3$. But one can show that there is no $(-1)$-curve $E_T$ in the minimal resolution of $T$, intersecting the first two (or equivalently the last two) irreducible components of the exceptional divisor over a singular point of type $A_3$ by using \cite[Proposition 4.2]{Hw13} or \cite[Theorem 1]{Hw19}. This is a contradiction.

Assume that every $P_i$ is cyclic. Let $W\rightarrow\bar{X}$ be the blowup of intersection point of $E'$ and one middle component.
Let $W\rightarrow\bar{W}$ be the contraction of all $(-n)$-curves ($n\geq 2$). Then $\bar{W}$ is a del Pezzo surface log terminal singularities and $\rho(\bar{W})=1$. Moreover, $\bar{W}$ has four cyclic singular point and two of them are non rational double points. A contradiction. Put $h'\colon W\rightarrow X'$ are blowups of intersection point of $E'$ and $D'_1$ $k$ times. Let $W\rightarrow\bar{W}$ be the contraction of all $(-n)$-curves ($n\geq 2$). Then $\bar{W}$ has only log terminal singularities and $\rho(\bar{W})=1$. So, by Theorem \ref{BMY} we see that $P_1$ and $P_2$ are of type $A_1$ and $P_3$ is a non-cyclic singular point that is not a rational double point.

Let $\psi\colon\tilde{X}'\rightarrow X$ be the blowup of one middle  component of $P_3$. Let $g'\colon\tilde{X}'\rightarrow Y'$ be the other contraction. Since we consider every cases except when $g'$ contract two singular point that lie on exception curve of $\psi$. As above, there exists birational transformation $\bar{X}$ to $W$, where $W$ contains two isolate $(-2)$-curves and two linear chains of $(-2)$-curves. Moreover, there exist two $(-1)$-curves $E_1,E_2$ that meet ends of each linear chain and central components. By the classification, we see that the type of singularities is $2A_1+2A_3$, a contradiction as above.

This completes the proof of   Theorem \ref{MainT}.

\end{document}